\documentclass[10pt,letterpaper]{amsart}
\usepackage{hyperref}
\usepackage{amsmath,amssymb,url}
\usepackage{graphicx, color}
\usepackage{mathtools}
\usepackage{enumerate}
\usepackage{latexsym}
\usepackage[margin=1in]{geometry}
 \newcommand{\0}{\mathbf{0}}
 \newcommand{\1}{\mathbf{1}} 
 \newcommand\cA{\mathcal A}
 \newcommand\cB{\mathcal B}
 \newcommand\C{\mathbb C}
 \newcommand\cC{\mathcal{C}}
 \newcommand\cD{\mathcal{D}} 
 \newcommand\dist{\textrm{dist}} 
 \newcommand\cE{\mathcal E}
 \newcommand{\e}{\mathbf{e}} 
 \newcommand{\g}{\mathbf{g}}  
 \newcommand\rH{\textrm{H}}
 \newcommand\cI{\mathcal I}
  \newcommand\bi{\mathbf i}
 \newcommand\cJ{\mathcal J} 
 \newcommand\cK{\mathcal K} 
 \newcommand\cL{\mathcal L} 
 \newcommand\cM{\mathcal M}  
 \newcommand\N{\mathbb N} 
 \newcommand\Q{\mathbb Q}  
  
 \newcommand{\rank}{\textrm{rank }}
 \newcommand{\range}{\textrm{range }}
 \newcommand\cR{\mathcal{R}}
 \newcommand\R{\mathbb R}
  
 \newcommand\rS{\mathrm S}
 \newcommand\supp{\textrm{supp }}
 \newcommand{\trans}{^\top}
 \newcommand{\uu}{\mathbf{u}} 
 \newcommand{\U}{\mathbf{U}}  
 \newcommand{\V}{\mathbf{V}} 
 \newcommand{\vv}{\mathbf{v}} 
 \newcommand{\w}{\mathbf{w}} 
 \newcommand{\bW}{\mathbf{W}} 
 \newcommand{\x}{\mathbf{x}}
 \newcommand{\y}{\mathbf{y}}
 \newcommand{\z}{\mathbf{z}} 

\newtheorem{theorem}{Theorem}[section]
\newtheorem{proposition}[theorem]{Proposition}
\newtheorem{lemma}[theorem]{Lemma}
\newtheorem{corollary}[theorem]{Corollary}
\theoremstyle{definition}
\newtheorem{definition}[theorem]{Definition}
\newtheorem{example}[theorem]{Example}

\theoremstyle{remark}

\newtheorem{Algorithm}[theorem]{Algorithm}

\DeclareMathOperator{\tr}{tr}
\DeclareMathOperator{\diag}{diag}

\begin{document}
\title{{The Collatz-Wielandt quotient  for pairs of nonnegative operators}}
\author{Shmuel~Friedland, Chicago}
\address{Department of Mathematics, Statistics and Computer Science,  University of Illinois, Chicago, IL,  60607-7045,
email: {friedlan@uic.edu}}

\begin{abstract}
In this paper we consider two versions of the Collatz-Wielandt quotient for a pair of nonnegative operators $A,B$ that map a given pointed generating cone in the first space into  a given pointed generating cone in the second space.  If the two spaces and two cones are identical, and $B$ is the identity operator then one version of this  quotient is the spectral radius of $A$.  In some applications, as commodity pricing, power control in wireless networks and quantum information theory, one needs to deal with the Collatz-Wielandt quotient for two nonnegative operators.  In this paper we treat the two important cases: a pair of rectangular nonnegative matrices and a pair completely positive operators.  We give a characterization of minimal optimal solutions and  polynomially computable bounds on the Collatz-Wielandt  quotient.
\end{abstract}

 \maketitle

\keywords\textbf{Keywords}{ Perron-Frobenius theory, Collatz-Wielandt quotient, completely positive operators, commodity pricing, wireless networks, quantum information theory.}

\subjclass{\bf 2010 Mathematics Subject Classification.}
15A22, 15A45, 15B48, 15B57, 94A40

\section{Introduction}\label{sec:intro}
The celebrated Perron-Frobenius theorem describes important spectral properties of a square matrix $A$ with nonnegative entries \cite{Per07, Fro08, Fro09, Fro12}.   In particular, the spectral radius $\rho(A)$, (the maximum of absolute values of all eigenvalues of $A$), is an eigenvalue of $A$.  Furthermore, to $\rho(A)$ correspond a nonnegative eigenvector $\y$:
 \begin{equation}\label{PFeigvec}
 A\y=\rho(A)\y, \quad \y\gneq \mathbf{0},
 \end{equation} 
 which is called the Perron-Frobenius eigenvector, abbreviated as PF-eigenvector. 
 If $A$ is irreducible then $\mathbf{y}>\mathbf{0}$ and is unique up to multiplication by a positive scalar. 
 There are many classical and recent books giving a full account of the Perron-Frobenius theory of nonnegative matrices  for example \cite{BeP79, Fri15,Gan59,HorJoh,Mey00, Min88, Sen}.  It is well known that PF-theory found innumerous applications in all sciences.  See for example \cite{ABHKLPP12,ABHKLPP,PSC05,Sri03} and references therein.
 
 We denote by $\R^{m\times n}\supset \R^{m\times n}_+$ the sets of real valued and nonnegative valued $m\times n$ matrices respectively. ($\R^{m}=\R^{m\times 1}\supset \R_+^m=\R_+^{m\times1}$ the set of column vectors and the subset of nonnegative column vectors with $m$ coordinates respectively.)  Let $[n]=\{1,\ldots,n\}, \frac{0}{0}=0, \frac{+}{0}=\infty$. 
 
  One of the most applicable feature of PF-theory is the Collatz-Wielandt  characterization of $\rho(A)$ for $A\in\R^{m\times m}_+$  \cite{Col42, Wie50}:
 \begin{equation}\label{ColWiel}
 \inf_{\x=(x_1,\ldots,x_m)\trans> \0} \max_{i\in[m]}\frac{(A\x)_i}{x_i}=\rho(A).
 \end{equation}
 Theorem 6.4.5 in \cite{Fri15} gives a necessary and sufficient condiitons on $A$ that the above infimum is achieved for some positive $\x$.   In a simple noiseless model  in wireless netrworks $\frac{1}{\rho(A)}$ is the reception threshold \cite{Sri03, PSC05} and \cite[\S6.9]{Fri15}.
   
 Given a pair of nonsquare matrices $A,B\in \R^{m\times n}$ one can consider the generalized eigenvalue problem
\begin{equation}\label{geneigprobAB}
A\x=\lambda B\x, \quad A,B\in\R^{m\times n}, \lambda\in\C.
\end{equation}
In order to assure that one has a finite number of eigenvalues, one needs to assume that 
 
 \noindent 
$\max(\rank A,\rank B)=n$, which implies that $m\ge n$.
  There is an extensive literature on this problem, see for example \cite{Erd67,BEGM,CG06} and references there in. 
 A first attempt to generalize Perron-Frobenius theory to \eqref{geneigprobAB}, to the best knowledge of the author, is by Mangasarian \cite{Man71}.
 He showed the assumption that $B\trans\y\ge \0$ implies  $A\trans \y\ge \0$ yields that \eqref{geneigprobAB} has a discrete and finite spectrum, and the eigenvalue with the largest absolute value is real, nonnegative and a corresponding eigenvector is nonnegative.
 
 The Perron-Frobenius theory was generalized to nonnegative operators $A$ with respect to a closed pointed generating cone $\mathbf{K}$ in finite and infinite dimensional Banach spaces \cite{KR48,Kar59, Sch74,BeP79}.  There is also a natural generalization of the Collatz-Wielandt characterizations to  the spectral radius of $\rho(A)$ \cite{Fri90,Fri91}.
 
 The aim of this paper is to consider the Collatz-Wielandt type infmax problem for a pair of nonnegative operators $A,B: \R^{N_1}\to \R^{N_2}$,  with respect to closed pointed generating cones $\mathbf{K}_i\subset \R^{N_i}$ for $i=1,2$: $A\mathbf{K}_1,B\mathbf{K}_1\subseteq  \mathbf{K}_2$.   
 Denote by $\mathbf{K}_i^{o}$ the interior of $\mathbf{K}_i$.
 Let 
 \begin{equation}\label{defrABxK}
 r(A,B,\x)=\inf\{t, t\in[0,\infty], tB\x-A\x\in\mathbf{K}_2\} \quad \textrm{for } \x\in\mathbf{K}_1\setminus\{\0\}.
 \end{equation}
 We set $r(A,B,\x)=\infty$ if $tB\x-A\x\not\in\mathbf{K}_2$ for for each $t\ge 0$.
 Define
 \begin{equation}\label{defrhoABK}
 \rho(A,B)=\inf \{r(A,B,\x), \x\in\mathbf{K}_1^{o}\}.
 \end{equation} 
 In general, $\rho(A,B)$ can have any value in $[0,\infty]$.  We call $\rho(A,B)$ the \emph{Collatz-Wielandt quotient}.
 
 We also consider the following variation of $\rho(A,B)$:
 \begin{equation}\label{defhrhoABK}
 \hat\rho(A,B)=\inf \{r(A,B,\x), \x\in\mathbf{K}_1\setminus\{\0\}\}.
 \end{equation} 
 We call $\hat\rho(A,B)$ the \emph{weak Collatz-Wielandt quotient}.
 Clearly, $\hat\rho(A,B)\le \rho(A,B)$.  Even in the classical case, where $A\in\R_+^{m\times m}$ and $B$ is the identity matrix one may have the strict inequality $\hat\rho(A,I)<\rho(A,I)=\rho(A)$.  A simple example is the following one.
 Assume that $A$ is a direct sum of $k$ irreducible matrices $A_1,\ldots,A_k$, where $\rho(A_1)>\rho(A_2)>\cdots > \rho(A_k)$.  (So $A$ is block diagonal diag$(A_1,\ldots,A_k)$.)  Then $\rho(A,I)=\rho(A_1)$ and $\hat\rho(A,I)=\rho(A_k)$.  See \S\ref{sec:clas}.  We show that  if either $A$ or $B$ are positive then the Collatz-Wielandt quotient and the weak Collatz-Wielandt quotient are equal.  Furthermore we have the following stability results. Suppose that  we have two sequence of positive matrices $A_l$ and $B_l$ that converge to $A$ and $B$ respectively.   Then $\lim_{l\to\infty}\rho(A,B_l)=\hat \rho(A,B)$, and $\lim_{l\to\infty} \rho(A_l,B)=\rho(A,B)$ provided that $B$ does not have a zero row. Thus  the Collatz-Wielandt quotient and the weak Collatz-Wielandt quotient seem to be equally important quantities.  
 
 In the first part of this paper we consider the Collatz-Wielandt quotient for a pair of rectangular nonnegative matrices $A,B\in\R^{m\times n}_+$, i.e. :
 \begin{equation}\label{minmaxprb}
\rho(A,B):= \inf_{\x=(x_1,\ldots,x_n)\trans> \0} \max_{i\in[m]}\frac{(A\x)_i}{(B\x)_i}.
 \end{equation}
 (So $\mathbf{K}_i=\R^{N_i}_+$ for $i=1,2$ and $N_1=n, N_2=m$.)
 
 We now give a simple model of commodity pricing, where the above Collatz-Wielandt ratio arises.   (Another example in wireless networks discussed in \cite{ABHKLPP12,ABHKLPP} is discussed in \S\ref{sec:specB}.)  Assume that we have $m$ producers of commodities which produce $n$ commodities.  Each producer $i$ produces a subset of commodities $C(i)\subset [n]$.  (We do not exclude the possibility that two producers produce the same commodity $j$.)  Assume that the price of commodity $j$ is $x_j> 0$.  Then $\x=(x_1,\ldots,x_n)\trans> \0$ is the pricing vector.  The expected value of the cost of the the producer $i$ for one unit of his products is $\sum_{j=1}^n a_{ij}x_j$.  The expected value of the profit of the producer $i$ for one unit  is $\sum_{j=1}^n b_{ij}x_j$.  One can impose the obvious conditions that $a_{ij}=0$ if $j\in C(i)$, (the producer $i$ does not buy the commodity it produces), and $b_{ij}=0$ if $j\not\in C(i)$, (the producer sell only the items it produces),  Then the ratio of the profit to the expense for the producer $i$  is $\frac{(B\x)_i}{(A\x)_i}$.  We call this ratio \emph{profit factor}.
 In order that each producer will stay in business for the pricing vector $\x$ one needs to satisfy the minimum profit  factor requirement: $\min_{i\in[m]}\frac{(B\x)_i}{(A\x)_i}\ge \beta$.
 Then the optimal pricing choice is the solution to the supremum problem  
 \[\sup_{\x>\0}\min_{i\in[m]}\frac{(B\x)_i}{(A\x)_i}=\frac{1}{\rho(A,B)}.\] 
 
 We are interested in a nontrivial case, where $\rho(A,B)<\infty$.  It is easy to show that this inequality holds if and only the following condition is satisfied: For each zero row $i$ of $B$  the row $i$ of $A$ is zero.   
 
  We now summarize our results for the extremal problem \eqref{minmaxprb}.  Assume that $\rho(A,B)\in(0,\infty)$.  (It is easy to characterize the case $\rho(A,B)=0$.)
  Then there exists $\y\in\R^n_+\setminus\{\0\}$ such that $\rho(A,B)=r(A,B,\y)$ with the following property:  There exists a sequence $\y_k>\0$ for $k\in\N$ such that $\lim_{k\to\infty}\y_k=\y$ and $\lim_{k\to\infty}r(A,B,\y_k)=\rho(A,B)=r(A,B,\y)$.   Such $\y$ is called an optimal $\y$.   
 An optimal vector $\y$ is called \emph{minimal optimal} if $\y$ is an optimal vector, and there is no optimal vector  $\z$ whose support is strictly contained in the support of $\y$.
 We call $\y$ a generalized Perron-Frobenius vector, abbreviated as GPF-eigenvector, if
 \begin{equation}\label{eigveccond}
 A\y=\rho(A,B)B\y, \quad \y\gneq \0.
  \end{equation}
  Note that if \eqref{geneigprobAB} has an eigenvector $\x> \0$ with a corresponding $\lambda>0$, then $\rho(A,B)\le \lambda$ and it is easy to give examples where
  $\rho(A,B)<\lambda$ and each optimal $\y$ is not a GPF-eigenvector.  (See the example in the end of \S\ref{sec:minopt}.)  We next show, as briefly pointed in \cite{ABHKLPP}, that for any $\varepsilon\in (0,1)$, we can find one of the following:  Either $\rho(A,B)<\varepsilon$ or we can find an approximation of $\rho(A,B,\varepsilon)$, such that $|\rho(A,B,\varepsilon)- \rho(A,B)|\le \varepsilon \rho(A,B)$,  in polynomial time.  This follows from the well known fact that a solvability of linear system of equations is polynomial in the data \cite{GLS88,Lov86}. 
  
   We show that each minimal optimal $\y$ has at most $m$ positive coordinates.  The existence of an optimal $\y$ with at most $m+1$ positive coordinates in a general setting is shown in \cite{ABHKLPP}.  Furthermore, if there exists an optimal vector with  $\ell\ge m$ positive coordinates, then the rank of the matrix $A'-\rho(A,B)B'$ is less than $m$. (Here $A',B'\in\R^{m\times \ell}_+$ are the submatrices induced by $\ell$ positive entries of $\y$.)
 This result implies that for each minimal optimal $\y'$ with $\ell$ positive coordinates, there is exists a minimal optimal $\y$ with the same support as $\y$ such that $(A\y-\rho(A,B)B\y)_i=0$ for at least $\ell$ indices $i\in[m]$.  That is, there exists a minimal optimal solution that is a GPF-eigenvector of the system $\tilde A\y=\rho(A,B)\tilde B\y$, where $\tilde A,\tilde B$ are the submatrices of $A,B$ obtained from $A,B$ by erasing a set of the of rows $\cI$ in $A,B$ respectively.
 For the optimal commodity pricing model that we introduced above the above results have the following  meaning:  First each zero coordinate $j$ of $\y$ implies that the commodity $j$ is not produced.    The producers corresponding to the set $\cI$ have their profit ration above $\frac{1}{\rho(A,B)}$. For all other producers the profit ratio is $\frac{1}{\rho(A,B)}$.  Similar results are shown for $\hat\rho(A,B)$.  
 
 We also give the following generalization of the main result in \cite{ABHKLPP}.  Namely, if $B$ has no zero row and each column has one positive element, then
 there is an optimal solution which is a GPF-eigenvector.  That is, in the wireless model of transmitters-receivers, where each receiver $i$ can obtain a signal from several transmitters, which can only transmit to the receiver $i$, there is a choice to pick exactly one transmitter $j(i)$.  (However, if the system is not irreducible, as defined in \cite{ABHKLPP}, this choice would imply that some other transmitters to receiver $i'$ should be shut off.)
  
 The second part of this paper is a generalization of the above results to pairs of completely positive operators, which are frequently appear in quantum information theory
 as quantum channels.
 Denote by $\rH_{n}\supset \rH_{+,n}\supset \rH_{+,1,n}$ the real space of $n\times n$ Hermitian matrices, the cone of positive semidefinite matrices and the convex set of positive semidefinite matrices of trace one.  Note that $\rH_{+,n}$ is a pointed generating cone in $\rH_n\equiv\R^{n^2}$.
 In quantum information theory (QIT), $\rH_{+,1,n}$ is the set of density matrices, (mixed states).  Recall that $\cC:\rH_{n}\to \rH_{m}$ is called a completely positive operator, abbreviated as CP-operator, if
 \begin{equation}\label{defccpop}
 \cC(X)=\sum_{j=1}^k T_jXT_j^*, \quad T_j\in\C^{m\times n}, j\in[k].
 \end{equation}
 (Here $\C^{m\times n}$ is the space of $m\times n$ complex valued matrices and $T^*=\bar T\trans$ for $T\in\C^{m\times n}$.)
 Then $\cC(\rH_{+,n})\subseteq \rH_{+,m}$, that is, $\cC$ is a nonnegative operator with respect to the pair of cones $\rH_{+,n},\rH_{+,m}$.  In QIT $\cC$ is called quantum channel if 
 \begin{equation}\label{dwfqc}
 \sum_{j=1}^k T_j^*T_j=I_n.
 \end{equation}
 That is, $\cC$ is a quantum channel if and only if $\cC$ is a CP trace preserving operator.
 In particular, $\cC$ maps a density matrix to a density matrix.  Quantum channel is one of the most significant notions in QIT \cite{Sho02,Hol06, Shi06,Has09, MW09,Hol12}.
The second main problem we discuss are $\rho(\cA,\cB)$ and $\hat\rho(\cA,\cB)$ for two CP-operators $\cA,\cB:\rH_{+,n}\to \rH_{+,m}$.  The quantities $\rho(\cA,\cB)$ and $\hat\rho(\cA,\cB)$ could be viewed the quantum analog of the optimal commodity pricing assignment discussed above.
We show that most of our results on $\hat\rho(A,B)$ generalize to $\hat\rho(\cA,\cB)$, and some results on $\rho(A,B)$  generalize to $\rho(\cA,\cB)$.   We show that there exists weak optimal  $Y\in\rH_{+,n}\setminus\{0\}$ such that $r(\cA,\cB,Y)=\hat\rho(\cA,\cB)$.  A weak optimal $Y$ is called minimal if there is no optimal $Z$ such that $\range Z$ is strictly contained in $\range Y$.
We show that a minimal weak optimal $Y$ has rank at most $m$.  Assume that $Y'$ is a minimal weak optimal with rank $\ell$.  Then there exists a minimal weak optimal $Y$ satisfying
 $\range Y'=\range Y$, such that $\rank (\hat\rho(\cA,\cB)\cB-\cA)(Y)\le m-\ell$.  In particular, if $\ell=m$ then $Y$ is a weak GPF-eigenvector.
Assume that $\cB$ is $\delta$-positive for a given rational $\delta>0$.  (This assumption can be verified in polynomial time.)   Then  $\rho(\cA,\cB)=\hat\rho(\cA,\cB)$.  Furthermore $\rho(\cA,\cB)$ has an $\varepsilon\in (0,1)$ approximation in polynomial time in $\langle\cA\rangle +\langle\cB\rangle+\langle\delta\rangle +\langle\varepsilon\rangle$.  (We need the assumption that $\cB$ is $\delta$-positive  because verifying
 the existence of a nonzero positive semidefinite matrix satisfying $(\cA-t\cB)(X)\le 0$ is a feasibility problem in semidefinite programming, which may be not polynomiallay solvable.)
 
 We now survey briefly the content of the paper.  Section \ref{sec:prelim} discusses basic properties of $\rho(A,B)$ and $\hat\rho(A,B)$. Section \ref{sec:clas} discusses that classical case of the pair $A,B\in\R^{m\times n}_+$ where $m=n$ and $B$ is the idenity matrix $I$.  We show that $\rho(A,I)=\rho(A)$.  If $A$ is not irreducible than one may have the strict inequality $\hat\rho(A,I)<\rho(A,I)$.  We characterize completely $\hat\rho(A,I)$.  In Section \ref{sec:polaprox} we give a polynomial time approximation algorithm to $\rho(A,B)$ and $\hat\rho(A,B)$.  In Section \ref{sec:minopt} we give various properties of minimal optimal and minimal weak optimal vectors for the pair $A,B\in\R^{m\times n}_+$.  In Section \ref{sec:specB} we discuss WN-pairs $A,B\in\R^{m\times n}_+$ arising in wireless network.  That is, $B$ has no zero row and one positive element in each column.  Such pairs were introduced and studied in \cite{ABHKLPP12, ABHKLPP}.  We give generalizations of the results in \cite{ABHKLPP12, ABHKLPP}, since we do not restrict ourselves to $S$-irreducible systems. Sections \ref{sec:CPop}, \ref{sec:polaprhocAB} and \ref{minoptCP} are devoted  to study of the Collatz-Wielandt quotients for pairs of completely positive operators.

 \section{Preliminary results}\label{sec:prelim}
For a positive integer $n$ let  $\1_n=(1,\ldots,1)\trans \in\R^n$.  Let  $S\subseteq [m], T\subseteq [n]$.   Denote by $\1_S=(x_1,\ldots,x_m)\trans\in\R_+^m$ the characterstic vector of $S$, i.e., $x_i=1$ if $i\in S$ and $x_i=0$ otherwise.  So $\1_{\emptyset}=\0, \1_{[m]}=\1_m$.  Assume that $F\in\R^{m\times n}$.  Denote by $F(S,T)$ the matrix obtained from $F$ be deleting the rows of $F$ in the set $S$ and the columns in the set $T$.  So $F(S,T)\in\R^{(m-|S|)\times (n-|T|)}$.  If either $S=[m]$ or $T=[n]$ we denote $F(S,T)$ by $\emptyset$. Denote by $F[S,T]$ the matrix $F([m]\setminus S,[n]\setminus T)$.

Assume that $A,B\in\R^{m\times n}_+$.  For $\x=(x_1,\ldots,x_n)\trans\gneq\0$ we define
 \[r(A,B,\x)=\inf\{t, t\ge 0, A\x\le tB\x\}.\]
 Note that $r(A,B,\x)\in[0,\infty]$.  That is , $r(A,B,\x)=\infty$ if and only if there exists $i\in[m]$ such that $(A\x)_i>0$ and $(B\x)_i=0$.  Equivalently
 \[r(A,B,\x)=\max\{\frac{(A\x)_i}{(B\x)_i}, \; i\in[m]\}.\]
 Hence
 \begin{equation}\label{charrhoAB}
 \rho(A,B)=\inf\{r(A,B,\x), \; \x>\0\}, \; 
 \hat\rho(A,B)=\inf\{r(A,B,\x), \; \x\in\R_+^n\setminus\{\0\}\}.
 \end{equation}
 
 The following lemma is deduced straightforward.
 \begin{lemma}\label{rowB=0}  Assume that $A,B\in\R^{m\times n}_+$.  
 \begin{enumerate}
\item $\rho(0,B)=\hat \rho(0,B)=0$.
\item $\rho(A,B)=\infty$ if and only if $B$ has a zero row $i$, while the i-th row of $A$ is not zero.
 \item Let  $S$ be a strict subset of $[m]$.  
 Then for each $\x\gneq \0$ $r(A(S,\emptyset),B(S,\emptyset),\x)\le r(A,B,\x)$. 
 In particular $\rho(A,B)\ge \rho(A(S,\emptyset),B(S,\emptyset))$ and  
$\hat\rho(A,B)\ge \hat\rho(A(S,\emptyset),B(S,\emptyset))$.  Suppose furthermore that for each $i\in S$  the row $i$ of $A$ and $B$ are zero.  Then for each $\x\gneq \0$ $r(A(S,\emptyset),B(S,\emptyset),\x)=r(A,B,\x)$.  In particular, $\rho(A,B)=\rho(A(S,\emptyset),B(S,\emptyset))$ and 
$\hat \rho(A,B)=\hat \rho(A(S,\emptyset),B(S,\emptyset))$ .
 \item Let $T$ be a strict subset of $[n]$ such that
$A[[m],T]=B[[m],T]=0$.  Then
 $\rho(A,B)=\rho(A(\emptyset, T),B(\emptyset, T))$ and 
 $\hat\rho(A,B)=\hat\rho(A(\emptyset, T),B(\emptyset, T))$.
 \item
 Suppose that 
 for each zero $i$ row of $B$ the row $i$ of $A$ is zero.  Then  
 $r(A,B,\1_n)<\infty$.
\item There exists $\x\in\R^n_+\setminus\{\0\}$ such that $r(A,B,\x)<\infty$ if and only if there exists a nonempty subset $T\subseteq [n]$ such that $r(A,B,\1_T)<\infty$.
\item The weak Collatz-Wielandt quotient is positive if and only if the union of the supports of the rows of $A$ is $[n]$, i.e., $A\trans\1_m>\0$.
\item Assume that $A_1,B_1\in\R^{m\times n}_+$ and $A_1\le A, B\le B_1$ then $\rho(A_1,B_1)\le \rho(A,B)$ and $\hat\rho(A_1,B_1)\le \hat \rho(A,B)$.
 \end{enumerate}   
 \end{lemma}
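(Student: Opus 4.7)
The plan is to unwind each claim directly from the definition $r(A,B,\x) = \max_{i\in[m]}(A\x)_i/(B\x)_i$ together with the conventions $0/0 = 0$ and $a/0 = \infty$ for $a > 0$. Items (1), (2), (3), (4), and (8) are essentially mechanical. For (1), $r(0,B,\x) = 0$ for every admissible $\x$, so both infima vanish. For (2), if some row $i$ of $B$ is zero while the $i$-th row of $A$ is nonzero, then $(B\x)_i = 0 < (A\x)_i$ for every $\x > \0$, forcing $r(A,B,\x) = \infty$; the converse direction will follow from (5), since $r(A,B,\1_n)$ is then finite. For (3) and (4), coordinates in $[m]\setminus S$ (respectively columns in $[n]\setminus T$) contribute identically in both the original and reduced problems, and under the supplementary hypothesis the deleted coordinates contribute $0/0 = 0$, which is dominated by the surviving maximum. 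Item (8) is immediate since $(A_1\x)_i \le (A\x)_i$ and $(B_1\x)_i \ge (B\x)_i$ pointwise, whence $r(A_1,B_1,\x) \le r(A,B,\x)$ for each admissible $\x$, and then infima preserve the inequality.

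For (5), the hypothesis ensures that for every $i \in [m]$ either $(B\1_n)_i > 0$, or else the $i$-th row of $B$ is zero and then by assumption so is the $i$-th row of $A$, giving $(A\1_n)_i/(B\1_n)_i = 0/0 = 0$; hence $r(A,B,\1_n) < \infty$. For (6), the implication from some nonempty $T$ to an $\x$ with finite $r$ is trivial (take $\x = \1_T$). For the converse, given $\x \gneq \0$ with $r(A,B,\x) < \infty$, put $T = \supp\x$, and observe that for any $i$ with $(A\1_T)_i > 0$ there is some $j \in T$ with $a_{ij} > 0$; then $(A\x)_i \ge a_{ij}x_j > 0$, so finiteness of $r(A,B,\x)$ forces $(B\x)_i > 0$, which means some $j' \in T$ has $b_{ij'} > 0$, yielding $(B\1_T)_i > 0$ as required.

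Item (7) is the only claim requiring a limit argument and I expect it to be the main obstacle. One direction is easy: if the $j$-th column of $A$ is zero, then $A\e_j = \0$, and $r(A,B,\e_j) = 0$ by the convention $0/0 = 0$, hence $\hat\rho(A,B) = 0$. For the converse, assume $A\trans\1_m > \0$ and suppose for contradiction that $\hat\rho(A,B) = 0$. Choose a minimizing sequence $\x_k \gneq \0$, normalized so that $\|\x_k\|_1 = 1$, with $r(A,B,\x_k) \to 0$. By compactness of the simplex, pass to a subsequence with $\x_k \to \x$ and $\|\x\|_1 = 1$; in particular $\x \gneq \0$. The inequality $A\x_k \le r(A,B,\x_k)\,B\x_k$ holds coordinatewise for each sufficiently large $k$, and letting $k \to \infty$ yields $A\x \le \0$, so $A\x = \0$. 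Picking any $j$ with $x_j > 0$ then forces $a_{ij} = 0$ for all $i$, contradicting the hypothesis that every column of $A$ is nonzero.
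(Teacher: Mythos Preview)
The paper gives no proof, stating only that the lemma ``is deduced straightforward''; your detailed verification therefore already does more than the paper, and for items (1), (2), (3), (5), (6), (7), (8) your arguments are correct and unwind the definitions exactly as intended. Your compactness argument for (7) is fine, though a limit is not strictly needed: from $A\x\le r(A,B,\x)\,B\x$ one gets $\1_m^\top A\x\le r(A,B,\x)\,\1_m^\top B\x$, and when $A^\top\1_m>\0$ this yields the uniform lower bound $r(A,B,\x)\ge \min_j(A^\top\1_m)_j/\max_j(B^\top\1_m)_j>0$ (essentially the paper's next lemma).

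There is one genuine gap, and it lies in the statement rather than in your method. In item (4) you write $\x=(\x',\x'')$ with $\x'$ indexed by $[n]\setminus T$ and identify $r(A,B,\x)=r(A',B',\x')$; this is valid only when $\x'\ne\0$. For $\rho$ that suffices, since $\x>\0$ forces $\x'>\0$, and your argument for the $\rho$ equality is correct. For $\hat\rho$, however, vectors with $\supp\x\subseteq T$ are admissible, and for such $\x$ one has $A\x=B\x=\0$, hence $r(A,B,\x)=0$. So whenever $T\ne\emptyset$ one gets $\hat\rho(A,B)=0$, while $\hat\rho(A(\emptyset,T),B(\emptyset,T))$ need not vanish: take $m=1$, $n=2$, $A=B=(1\ 0)$, $T=\{2\}$; then $\hat\rho(A,B)=0$ but the reduced quotient equals $1$. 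Thus the $\hat\rho$ equality in (4), as stated, is false in general, and no argument can close that gap; your treatment of the $\rho$ half is the correct surviving content.
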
 
 
 The following lemma gives a lower bound on $\hat\rho(A,B)$:
 \begin{lemma}\label{lowestrhoAB}  Let $A=[a_{ij}],B=[b_{ij}]\in\R_{+}^{m\times n}$.
 Then
 \begin{equation}\label{lowestrhoAB1}
 \hat\rho(A,B)\ge \min_{j\in[n]} \frac{\sum_{i=1}^m a_{ij}}{\sum_{i=1}^m b_{ij}}.
 \end{equation}
 \end{lemma}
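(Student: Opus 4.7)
The plan is to sum the row-wise inequality defining $r(A,B,\x)$ over $i\in[m]$ and reduce the claim to a weighted average of column-sum ratios. Fix any $\x\gneq\0$ and set $r:=r(A,B,\x)$; the bound is trivial when $r=\infty$, so assume $r<\infty$. Then $A\x\le rB\x$ entry-wise, and premultiplying by $\1_m\trans$ gives
\[
\sum_{j=1}^n \alpha_j x_j \;\le\; r\sum_{j=1}^n \beta_j x_j,
\]
where $\alpha_j:=\sum_{i=1}^m a_{ij}$ and $\beta_j:=\sum_{i=1}^m b_{ij}$ are the column sums of $A$ and $B$.

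Suppose first that $\sum_j \beta_j x_j>0$. Discarding the (nonnegative) terms with $\beta_j=0$ from the numerator on the left and dividing yields
\[
r \;\ge\; \frac{\sum_{j:\beta_j>0}(\alpha_j/\beta_j)\,\beta_j x_j}{\sum_{j:\beta_j>0} \beta_j x_j}
\;\ge\; \min_{j:\beta_j>0}\frac{\alpha_j}{\beta_j}
\;\ge\; \min_{j\in[n]}\frac{\alpha_j}{\beta_j},
\]
the middle step being the elementary fact that a nonnegatively weighted average dominates the minimum, and the last step because enlarging the index set to all of $[n]$ can only decrease the minimum (any additional $j$ has either $\alpha_j=\beta_j=0$ contributing $0/0=0$, or $\alpha_j>0,\beta_j=0$ contributing $+\infty$).

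If instead $\sum_j \beta_j x_j=0$, then $\beta_j x_j=0$ for every $j$, and the summed inequality forces $\alpha_j x_j=0$ for every $j$ as well. Since $\x\ne\0$, some coordinate $j_0$ has $x_{j_0}>0$; then $\alpha_{j_0}=\beta_{j_0}=0$, so $\alpha_{j_0}/\beta_{j_0}=0/0=0$ under the convention fixed in Section~\ref{sec:intro}, whence $\min_j \alpha_j/\beta_j=0\le r$. In both cases we obtain $r(A,B,\x)\ge \min_j \alpha_j/\beta_j$, and taking the infimum over $\x\in\R^n_+\setminus\{\0\}$ yields \eqref{lowestrhoAB1}.

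There is no real obstacle here; the only point that needs care is the bookkeeping for columns with $\beta_j=0$, which the conventions $\frac{0}{0}=0$ and $\frac{+}{0}=\infty$ dispatch cleanly. A similar argument will later give a companion upper bound by testing standard-basis vectors, but for the lower bound this single summation step suffices.
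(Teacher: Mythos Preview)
Your proof is correct and follows essentially the same approach as the paper: sum the entrywise inequality $A\x\le r(A,B,\x)B\x$ over $i\in[m]$ and bound the resulting ratio of column-sum weighted averages by the minimum column-sum ratio. The only difference is that you spell out the degenerate case $\sum_j\beta_j x_j=0$ explicitly, whereas the paper's one-line chain of inequalities leaves that case to the conventions $\frac{0}{0}=0$ and $\frac{+}{0}=\infty$ without comment.
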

 \begin{proof} Clearly, it is enough to assume that $\hat\rho(A,B)<\infty$.  Assume that $\x\gneq \0$ and $r(A,B,\x)<\infty$.  Observe that
 \[r(A,B,\x)\ge \frac{\sum_{i=1}^m (A\x)_i}{\sum_{i=1}^m (B\x)_i}=\frac{\1_m A\x}{\1_m B\x}=\frac{\sum_{j=1}^n (\1_m A)_jx_j}{\sum_{j=1}^n (\1_m B)_jx_j}\ge \min_{j\in [n]} \frac{(\1_m A)_j}{(\1_m B)_j}.\]
 \end{proof}
\begin{lemma}\label{cindrABfinite}
Assume that $A,B\in\R^{m \times n}_+$.   Let $\cI\subseteq [m]$ be the set of the zero rows of $B$.  Denote by $T$ be the union of the the supports of the the rows $i\in\cI$ of $A$.  Then one of the following conditions holds
\begin{enumerate}
\item If $\cI=\emptyset$ then $\rho(A,B)\le r(A,B,\1_n)<\infty$.
\item If $T=[n]$ then $\hat\rho(A,B)=\infty$
\item  Assume that $\cI\ne \emptyset$ and $T$ is a strict subset of $[n]$,  Then 
$\hat\rho(A,B)=\hat\rho(A(\cI,T),B(\cI,T))$.
\end{enumerate}
\end{lemma}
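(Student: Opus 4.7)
The plan is to handle the three cases in turn; the first two are nearly immediate, and the real content sits in case 3.

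Case 1 ($\cI=\emptyset$) follows at once from Lemma \ref{rowB=0}(5): the hypothesis that every zero row of $B$ gives a zero row of $A$ is vacuously satisfied, so $r(A,B,\1_n)<\infty$, and since $\1_n>\0$ this bounds $\rho(A,B)$ from above.

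Case 2 ($T=[n]$) I would prove by a single observation: for any $\x\gneq\0$, pick $j$ with $x_j>0$; by the definition of $T$, there exists $i\in\cI$ with $a_{ij}>0$, hence $(A\x)_i\ge a_{ij}x_j>0$, while $(B\x)_i=0$ since row $i$ of $B$ vanishes. Thus $r(A,B,\x)=\infty$ for every admissible $\x$, and the infimum defining $\hat\rho(A,B)$ is $\infty$.

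For case 3, the idea is that any $\x\gneq\0$ with $r(A,B,\x)<\infty$ is forced to vanish on $T$. Indeed, running the case-2 argument in reverse: if $x_j>0$ for some $j\in T$, then some $i\in\cI$ has $a_{ij}>0$, and $(B\x)_i=0$ would force $r(A,B,\x)=\infty$. So the only $\x\gneq\0$ with finite $r$-value are those supported on $[n]\setminus T$, and for such $\x$ the relation $(A\x)_i=0=(B\x)_i$ holds automatically for every $i\in\cI$ (the $\cI$-rows of $A$ live entirely in $T$). The restriction map $\x\mapsto\x'$ to the coordinates in $[n]\setminus T$, together with its inverse (zero-extension), therefore sets up a bijection between admissible $\x$ and admissible nonnegative nonzero vectors $\x'$ for the reduced pair $(A(\cI,T),B(\cI,T))$. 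Under this bijection the quantities $r(A,B,\x)$ and $r(A(\cI,T),B(\cI,T),\x')$ are equal, because only indices $i\notin\cI$ contribute to the maximum. Taking infima yields $\hat\rho(A,B)=\hat\rho(A(\cI,T),B(\cI,T))$.

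The only delicate point worth flagging is the use of the convention $\tfrac{0}{0}=0$: one has to be sure that the rows indexed by $\cI$ genuinely drop out of $r$ on both sides of the reduction, which is exactly why $T$ is defined as the union of supports of the $\cI$-rows of $A$. Once that bookkeeping is in place, the reduction is transparent and the equality of infima is immediate.
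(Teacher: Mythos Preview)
Your proof is correct and follows essentially the same approach as the paper's own proof: in each case the paper argues exactly as you do (case~1 via $B\1_n>\0$, case~2 by exhibiting an index $i\in\cI$ with $(A\x)_i>0$ and $(B\x)_i=0$, and case~3 by first forcing $\supp\x\subseteq[n]\setminus T$ and then observing that the $\cI$-rows drop out). Your explicit flagging of the $\tfrac{0}{0}=0$ convention and the bijection between admissible vectors is a helpful clarification but not a departure from the paper's argument.
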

\begin{proof}  {\it (1)}  If $\cI=\emptyset$ then $(B\1_n)_i>0$ for each $i$ and $r(A,B,\1_n)=\max_{i\in[m]}\frac{(A\1_n)_i}{(B\1_n)_i}<\infty$.

\noindent
{\it (2)}  Suppose that $T=[n]$.  Let $\x\gneq \0$.  Then there exists $i\in\cI$ such that $(A\x)_i>0$ and $(B\x)_i=0$.  Hence $r(A,B,\x)=\infty$ which yields that $\hat\rho(A,B)=\infty$.

\noindent
{\it (3)} Suppose that $\cI\ne \emptyset, T\subset [n]$. Let $\x\gneq \0$ and assume that $(\supp \x)\cap T\ne \emptyset$.  Then there exists $i\in\cI$ such that $(A\x)_i>0$.  As $(B\x)_i=0$ it follows that $r(A,B,\x)=\infty$.  Hence to determine $\hat\rho(A,B)$ is enough to consider $\inf\{r(A,B,\x), \x\gneq \0, \supp\x\subseteq [n]\setminus T\}$.  
Note that if $\supp\x\subseteq [n]\setminus T$ then $(A\x)_i=(B\x)_i=0$ for $i\in\cI$.
Therefore $\hat\rho(A,B)=\hat \rho(A(\cI,T),B(\cI,T))$.
\end{proof}

The above lemma gives rise to a polynomial time algorithm to check if $\hat\rho(A,B)$ is finite or infinite:
\begin{Algorithm}\label{alg}
Given $A=[a_{ij}],B=[b_{ij}]\in\R^{m\times n}_+$ set $S=[n]$, $\cI=\cJ\subseteq [m]$ the set of zero rows of $B$ and $T$ the union of the the supports of the the rows $i\in\cI$ of $A$;

\noindent
While $\cI\ne\emptyset$ and $T\ne S$

\noindent
\quad Replace 
$ A,B,S$ by $A(\cI,T),B(\cI,T),S\setminus T$;

\noindent 
\quad Replace  $\cI$ by the set  of zero rows of $B$;

\noindent 
\quad Replace $T$ by the union of the the supports of the rows $i\in\cI$ of $A$;

\noindent
\quad Replace
$\cJ$ by $\cI\cup\cJ$;

\noindent 
Else 

\noindent
$\quad$ If $\cI=\emptyset$ then $\hat\rho(A,B)\le r(A,B,1_S)<\infty$ and stop;

\noindent
$\quad$ If $T=S$ then $\hat\rho(A,B)=\infty$ and stop;

\end{Algorithm}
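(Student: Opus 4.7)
The plan is to verify two things about the procedure: (a) its verdict (``$\hat\rho(A,B) < \infty$'' versus ``$\hat\rho(A,B) = \infty$'') is correct, and (b) it runs in polynomial time. Both rest on Lemma~\ref{cindrABfinite} together with a straightforward loop-invariant argument.

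For correctness, I would maintain the invariant $\hat\rho(A^{(k)},B^{(k)}) = \hat\rho(A,B)$, where $A^{(k)},B^{(k)},S^{(k)},\cI^{(k)},T^{(k)}$ denote the values of the variables at the start of the $k$-th pass through the loop (so $A^{(0)}=A$, $B^{(0)}=B$, $S^{(0)}=[n]$). The base case is trivial. For the induction step, whenever the loop body executes we have $\cI^{(k)}\neq\emptyset$ and $T^{(k)}\neq S^{(k)}$, which is exactly the hypothesis of part~3 of Lemma~\ref{cindrABfinite} applied to the current pair (with the ``column universe'' $S^{(k)}$ in place of $[n]$); the replacement step therefore preserves $\hat\rho$. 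On exit we have either $\cI^{(k)} = \emptyset$, in which case part~1 of the lemma gives $r(A^{(k)},B^{(k)},\1_{S^{(k)}}) < \infty$ and hence $\hat\rho<\infty$, or $T^{(k)} = S^{(k)}$, in which case part~2 gives $\hat\rho = \infty$. Together with the invariant, this yields the correct verdict for the original pair $(A,B)$.

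For termination and runtime, observe that every iteration of the loop body requires $\cI^{(k)} \neq \emptyset$, so at least one row of $B$ is deleted in each pass, and once a row or column has been deleted it is never reintroduced. Thus the loop executes at most $m$ times, and each pass -- scanning $B^{(k)}$ for zero rows, computing the union of supports of the $\cI^{(k)}$-rows of $A^{(k)}$, and extracting submatrices -- costs $O(mn)$ arithmetic operations, for a total of $O(m^2 n)$, polynomial in the bit length of the input. The only point requiring any care, and the main ``obstacle'' if there is one, is justifying that part~3 of Lemma~\ref{cindrABfinite} may be reapplied to the reduced pair rather than just to the original $(A,B)$; this is essentially a relabelling observation, since after forgetting the original row/column indices the pair $(A^{(k)},B^{(k)})$ is again a pair of nonnegative matrices to which the lemma applies verbatim. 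The accumulated set $\cJ$ plays no role in the correctness argument itself and is maintained purely to record which rows have been removed during the run.
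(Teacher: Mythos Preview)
Your argument is correct and is exactly what the paper intends: the paper does not supply a separate proof for this algorithm but presents it as an immediate consequence of Lemma~\ref{cindrABfinite} (``The above lemma gives rise to a polynomial time algorithm\ldots''), and your loop-invariant application of parts 1--3 of that lemma, together with the row-count termination bound, spells out precisely those implicit details.
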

Denote by $\Pi_n\subset \R^n_+$ the set of probability vectors on $\R^n_+$.  Let $\Pi_n^{o}$ be the interior of $\Pi_n$, i.e., all probability vectors with positive coordinates.

We now discuss some properties of $\rho(A,B)$ and $\hat\rho(A,B)$.
\begin{lemma}\label{scproprho} Let $A,B\in\R^{m\times n}_+$.
\begin{enumerate}
\item The function $r(A,B,\x)$ is lower semicontinuous on $\R_+^{m\times n}\times \R_+^{m\times n}\times (\R_+^n\setminus\{\0\})$.
\item Let $T\subset[n]$ be a nonempty subset such that $B\1_T>\0$.  Assume that $\x\in \R_+^n,\sup \y=T$.  Suppose that the sequence  $\0<\x_k\in\R^n, k\in\N$  converges to $\x$.  Then $\lim_{k\to\infty}r(A,B,\x_k)=r(A,B,\x)$.  In particular $\rho(A[[m],T],B[[m],T])\ge \rho(A,B)$.
\item  Assume that
\begin{eqnarray*}
A=[A_1\;A_2], \;A_1=\left[\begin{array}{c}A_{11}\\0\end{array}\right], \; A_2=\left[\begin{array}{c}A_{12}\\A_{22}\end{array}\right],\;B=[B_1\;B_2]\;,B_1=\left[\begin{array}{c}B_{11}\\0\end{array}\right],\;B_2=\left[\begin{array}{c}B_{12}\\B_{22}\end{array}\right],\\
\end{eqnarray*}
where $A_{11},B_{11}\in \R^{m'\times \ell}$, $1\le m'<m, 1\le \ell<n$, and $B_{11}\1_{\ell}>\0$. Suppose that $\rho(A_{11},B_{11})<\rho(A,B)$.  Then $\rho(A_{22},B_{22})=\rho(A,B)$.
\end{enumerate}
\end{lemma}
\begin{proof} \emph{(1)} Suppose that $A_k,B_k\in\R_+^{m\times n}, \x_k\in \R_+^n\setminus\{\0\}, k\in\N$ and assume that
\begin{eqnarray*}
\lim_{k\to\infty} A_k=A, \quad \lim_{k\to\infty} B_k=B, \quad \lim_{k\to\infty} \x_k=\x\in\R_+^{n}\setminus\{\0\}.
\end{eqnarray*}
Suppose that $t=\liminf_{k\to\infty} r(A_k,B_k,\x_k)\in[0,\infty]$.  If $t=\infty$ then $r(A,B,\x)\le t$.  Assume that $t\in[0,\infty)$.  By passing to subsequences we can assume without loss of generality that $\lim_{k\to\infty} r(A_k,B_k,\x_k)=t$.  As $A_k\x_k\le r(A_k,B_k,\x_k)\x_k$ we deduce that $A\x\le tB\x$.  Hence $r(A,B,\x)\le t$.

\noindent
\emph{(2)} As $B\1_T>\0$ and $\sup \x=T$ we deduce that $B\x>\0$.  Hence 
$$\rho(A,B)\le\lim _{k\to\infty}r(A,B,\x_k)=\lim _{k\to\infty}\max_{i\in[m]}\frac{(A\x_k)_i}{(B\x_k)_i}=\max_{i\in[m]}\frac{(A\x)_i}{(B\x)_i}=r(A,B,\x).$$
Let $\z\in \R^{|T|}$ be the projection of $\x$ on its support.  Then $\z>\0$ and $r(A,B,\x)=r(A[[m],T],B[[m],T],\z)$.  Hence  $\rho(A[[m],T],B[[m],T])\ge \rho(A,B)$.

\noindent
\emph{(3)}  Assume that $\rho(A_{11},B_{11})<t_0=\rho(A,B)$.  Suppose to the contrary that $\rho(A_{22},B_{22})<t_0$.  Let $\0<\z\in \R^\ell, \0<\uu\in\R^{n-\ell}$ such that
$r(A_{11},B_{11},\z)<t_0, r(A_{22},B_{22},\uu)<t_0$.  For $s>0$ let $\x(s)=(\z\trans,s\uu\trans)\trans$. As $B_{11}\z>\0$ there exists $K>0$ such that $B_{11}\z\ge KA_{12}\uu$.  Then 
\begin{eqnarray*}
&&A_{11}\z+sA_{12}\uu\le r(A_{11},B_{11},\z)B_{11}\z+sKB_{11}\z=(r(A_{11},B_{11}+sK)B_{11}\z,\\
&&A_{22}s\uu\le r(A_{22},B_{22})s\uu.
\end{eqnarray*}
Thus for small enough $s$ we have the inequality $r(A,B,\x(s))<t_0$, which is a contradiction.
\end{proof}
 \begin{lemma}\label{rhoABachiev}  Let $m, n\ge 1$ be integers.  Assume that $A,B\in\R^{m\times n}_+$. Then
 \begin{enumerate}
 \item There exists $\y\in\R^n_+\setminus\{\0\}$ such that $\hat\rho(A,B)=r(A,B,\y)$.  (Such $\y$ is called a weak optimal $\y$.) 
 \item
 There exists $\y\in\R^n_+\setminus\{\0\}$ such that $\rho(A,B)=r(A,B,\y)$ with the following property:  There exist a sequence $\y_k>\0$ for $k\in\N$ such that $\lim_{k\to\infty}\y_k=\y$ and $\lim_{k\to\infty}r(A,B,\y_k)=\rho(A,B)=r(A,B,\y)$.   (Such $\y$ is called an optimal $\y$.)  
 \end{enumerate}
 \end{lemma}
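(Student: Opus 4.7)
Both parts follow from a compactness argument on the probability simplex $\Pi_n = \{\y \in \R^n_+ : \1_n\trans \y = 1\}$, leveraging the scale invariance $r(A,B,t\y) = r(A,B,\y)$ ($t > 0$) to normalize minimizing sequences to $\Pi_n$ or $\Pi_n^{o}$. The key observation is that the defining inequality $A\y_k \le r(A,B,\y_k)\, B\y_k$ is closed under componentwise limits provided $r(A,B,\y_k) \to L < \infty$.

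For (1), the case $\hat\rho(A,B) = \infty$ is trivial: any nonzero $\y$ satisfies $r(A,B,\y) \ge \hat\rho(A,B) = \infty$. Otherwise I pick a minimizing sequence $\y_k \in \Pi_n$ with $r_k := r(A,B,\y_k) \to \hat\rho(A,B)$ and extract by Bolzano--Weierstrass a convergent subsequence $\y_k \to \y \in \Pi_n$ (hence $\y \ne \0$). Passing $A\y_k \le r_k B\y_k$ to the limit, using $r_k \to \hat\rho(A,B) < \infty$, yields $A\y \le \hat\rho(A,B)\,B\y$, i.e.\ $r(A,B,\y) \le \hat\rho(A,B)$; the reverse inequality is by the definition of $\hat\rho(A,B)$ as an infimum.

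For (2), dispose of $\rho(A,B) = \infty$ by taking any $\y > \0$ (Lemma~\ref{rowB=0}(2) gives $r(A,B,\y) = \infty = \rho(A,B)$) with the constant sequence. For $\rho := \rho(A,B) < \infty$ I reduce via Lemma~\ref{rowB=0}(3) to the case where $B$ has no zero rows, take $\y_k \in \Pi_n^{o}$ with $r(A,B,\y_k) \to \rho$, and extract a subsequential limit $\y_* \in \Pi_n$. Then $\y_* \in \Omega := \{\x \in \Pi_n : A\x \le \rho B\x\}$, a nonempty compact convex polytope. The natural split is: if $\Omega \cap \Pi_n^{o} \ne \emptyset$, pick $\y$ in the intersection; being in $\Omega$ forces $r(A,B,\y) \le \rho$ and being in $\Pi_n^{o}$ forces $r(A,B,\y) \ge \rho$, so $r(A,B,\y) = \rho$ and the constant sequence $\y_k \equiv \y$ suffices.

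The main obstacle is the boundary case $\Omega \cap \Pi_n^{o} = \emptyset$, where an arbitrary limit of a positive minimizing sequence can fail to achieve $\rho$ (e.g., for $A = \diag(1,0), B = I_2$ the sequence $\y_k = (1/k,\, 1-1/k)\trans$ converges to $(0,1)\trans$ with $r = 0 \ne 1 = \rho$). Here I would refine the subsequence so the index $i_k$ achieving the max in $r(A,B,\y_k)$ stabilizes at some $i_* \in [m]$: if $(B\y_*)_{i_*} > 0$, then the identity $(A\y_k)_{i_*} = r_k (B\y_k)_{i_*}$ passes to the limit as $(A\y_*)_{i_*} = \rho (B\y_*)_{i_*}$, giving $r(A,B,\y_*) \ge \rho$ and hence $= \rho$. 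If every stabilizing $i_*$ lies in $I_0 := \{i : (B\y_*)_i = 0\}$, then analysing $r(A,B,\y_* + \eta\x) \ge \rho$ for arbitrary $\x > \0$ with $\eta \downarrow 0$ shows the row-restricted pair $(A|_{I_0}, B|_{I_0})$ satisfies $\rho(A|_{I_0}, B|_{I_0}) \ge \rho$, hence $= \rho$ by Lemma~\ref{rowB=0}(3). This yields a strictly smaller problem ($|I_0| < m$ when $B\y_* \ne 0$; else $B\y_* = 0 = A\y_*$ lets me delete the columns $\supp \y_*$ via Lemma~\ref{rowB=0}(4), reducing $n$). Induction on $m+n$ then produces the required $\y$, and the approximating positive sequence is built by zero-extending the inductive sequence and adding a vanishing strictly positive perturbation.
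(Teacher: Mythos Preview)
Your Part~1 is correct and matches the paper's argument essentially verbatim.

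For Part~2, your overall architecture---compactness on $\Pi_n^o$, then induction to handle the boundary case $r(A,B,\y_*)<\rho$---is the same as the paper's. The paper inducts on $n$ and reduces simultaneously to rows $\cI=I_0$ and columns $\cK=[n]\setminus\supp\y_*$; you induct on $m+n$ and split into a row reduction (to $I_0$, when $B\y_*\ne\0$) and a column reduction (deleting $\supp\y_*$, when $B\y_*=\0$). These are morally the same reduction, and your derivation of $\rho(A|_{I_0},B|_{I_0})=\rho$ via $r(A,B,\y_*+\eta\x)\ge\rho$ is clean.

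There is, however, a real gap in how you lift the inductive solution back in the row-reduction branch. You write ``Induction on $m+n$ then produces the required $\y$'', but the inductive optimal $\y'$ for $(A|_{I_0},B|_{I_0})$ is \emph{not} in general optimal for $(A,B)$: for rows $i\notin I_0$ you have no control over $(A\y')_i/(B\y')_i$, which can exceed $\rho$. Nor does ``zero-extending the inductive sequence and adding a vanishing strictly positive perturbation'' address this---no zero-extension is needed here (the column dimension is unchanged), and a vanishing perturbation just returns you to $\y'$. The correct lift, which is exactly what the paper does, is to set $\y=\y_*+\eta\,\y'$ for a small \emph{fixed} $\eta>0$: for $i\notin I_0$ one has $(B\y_*)_i>0$ and $(A\y_*)_i/(B\y_*)_i\le r(A,B,\y_*)<\rho$, so these ratios stay below $\rho$ for $\eta$ small; for $i\in I_0$ one has $(A\y_*)_i=(B\y_*)_i=0$, so the ratio equals $(A\y')_i/(B\y')_i$ and the max over $I_0$ is $r(A|_{I_0},B|_{I_0},\y')=\rho$. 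Thus $r(A,B,\y)=\rho$, and the approximating positive sequence is $\y_*+\eta\,\y'_k$ with the inductive $\y'_k>\0$. Once you make this combination explicit your argument goes through.
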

 \begin{proof}  
 \emph{(1)}  If $\hat\rho(A,B)=\infty$ then each $\x\in\Pi_n$ is weak optimal.  Assume that $\hat\rho(A,B)<\infty$.
 Choose a sequence of $\y_k\in\Pi_n$, such that $t_k:=r(A,B,\y_k)\in (0,\infty), k\in\N$, $t_{k}\ge t_{k+1}$ for $k\in\N$, such that $\lim_{k\to\infty}=\hat\rho(A,B)$.  Pick up a subsequence of $\y_k$ which converges to $\y\in\Pi_n$.  
 Part \emph{(1)} of Lemma \ref{scproprho} yields that $r(A,B,\y)\le \hat\rho(A,B)$.  Thus  $r(A,B,\y)= \hat\rho(A,B)$ and $\y$ is weak optimal.
 
 \noindent
 \emph{(2)}  If $\rho(A,B)=\infty$ then each $\x\in\Pi_n^o$ is optimal.   Assume that $\rho(A,B)<\infty$.  Without loss of generality we may assume that $B$ does not have a zero row.
 We show by induction on $n$ that there exists an optimal $\y$.  For $n=1$ this claim is trivial.  Assume that the claim holds for $n\le N$. Suppose that $n=N+1$. 
 There exists a sequence of $\y_k\in\Pi_n^o$, such that $t_k:=r(A,B,\y_k)\in (0,\infty), k\in\N$, $t_{k}\ge t_{k+1}$ for $k\in\N$, such that $\lim_{k\to\infty}=\rho(A,B)$.   Pick up a subsequence of $\y_k$ which converges to $\w\in\Pi_n$.    Part \emph{(1)} of Lemma \ref{scproprho} yields that $r(A,B,\w)\le \rho(A,B)$. 
 If $\rho(A,B)=0$ we deduce that $\w$ is optimal.  Assume that $\rho(A,B)>0$.  If $r(A,B,\w)=\rho(A,B)$ then $\w$ is minimal.  Assume that $r(A,B,\w)<\rho(A,B)$.
 Hence $T=\supp\w$ is a strict subset of $[n]$.  Furthermore part \emph{(2)} of Lemma  \ref{scproprho} yields that $B\1_T$ is not positive. 
 
By relabeling the elements of $[n]$ we can assume that $T=[\ell]$ for some $\ell\in [n-1]$.   Let $\cK=\{\ell+1,\ldots,n\}$ and denote
 \[A_1=A[[m],[\ell]],\; B_1=B[[m],[\ell]],\; A_2=A[[m],\cK],\; B_2=B[[m],\cK].\] 
 Let $\cI$ be the set of zero rows of $B_1$.  As $B_1\1_\ell$ is not positive $|\cI|\ge 1$.
 As $r(A,B,\w)<\infty$ we deduce that $\cI$ is a subset of zero rows of $A_1$. Let $\0<\z\in \R^{\ell}$ be the projection of $\w$ on its support.  Relabel the rows of $A$ and $B$ such that $\cI=\{m'+1,\ldots,m\}$.
 
 Assume first that $m'=0$, i.e., $\cI=[m]$.  So $A_1=B_1=0$.  Part \emph{4} of  of Lemma \ref{rowB=0} yields that $\rho(A,B)=\rho(A_2,B_2)$.  As $|\cK|=n-\ell<n$ we can apply the induction hypothesis to $(A_2,B_2)$ to deduce the existence of an optimal $\uu\in\Pi_{n-\ell}, \supp \y\subseteq \cK$.  That is, there exists a sequence $\0<\uu_k\in\R^{n-\ell}, k\in\N$ such that $\lim_{k\to\infty} \uu_k=\uu$ and $\lim_{k\to\infty}r(A_2,B_2,\uu_k)=\rho(A_2,B_2)$.  Choose a sequence $\0<\z_k\in\R^{\ell}$ such that $\lim_{k\to\infty} \z_k=\0$.  Let $\vv_k=(\z_k\trans,\uu_k\trans)>\0$ for $k\in\N$.  Clearly $r(A,B,\vv_k)=r(A_2,B_2,\uu_k)$.  Hence $\y=(\0\trans, \uu\trans)\trans)$ is optimal.
 
  Assume that $m'=m-|\cI|\in [m-1]$.  It now follows that the conditions of part \emph{(3)} of Lemma  \ref{scproprho} holds.
The induction hypothesis yields that there exists $\uu\in\R_+^{ n-\ell}\setminus\{\0\}$ such that $r(A_{22},B_{22},\uu)=\rho(A_{22},B_{22})=\rho(A,B)$. 
Furthermore, there exists a sequence $\0<\vv_k\in \R^{n-\ell}, k\in\N$ such that $\lim_{k\to\infty}\vv_k=\vv$ and $\lim_{k\to\infty} r(A_2,B_2,\vv_k)=\rho(A_2,B_2)$.  For $s>0$ let $\vv(s)=(\z\trans, s\vv)\trans$.  As in the proof of  of part \emph{(3)} of Lemma  \ref{scproprho} we deduce $r(A,B,\vv(s))\le \max(r(A_{11},B_{11},\z)+Ks, \rho(A,B))$.  Choose $s_0>0$ such that $r(A_{11},B_{11},\z)+Ks_0<\rho(A,B)$.   Then $r(A,B,\vv(s_0))=\rho(A_2,B_2)=\rho(A,B)$.   Set $\vv_k=(\z\trans,s_0\uu_k\trans)\trans >\0, k\in\N$.  So  $\lim_{k\to\infty}\vv_k=\vv(t_0)$.
Choose $K'>K$ such that $r(A_{11},B_{11},\z)+K's_0< \rho(A,B)$.  As $lim_{k\to\infty}\uu_k\to \uu$ it follows that there exist $N$ such that $A_{12}\uu_k\le K'B\z$  for $k>N$.  Therefore $r(A,B,\vv_k)=\rho(A_2,B_2,\uu_k)$ for $k>N$.  This implies that $\vv(s_0)$ is optimal.
\end{proof}
 
 We call $\y$ a \emph{minimal} (weak) optimal if $\y$ is (weak) optimal and there is no (weak) optimal $\w$ such that the support of $\w$ is strictly contained in the support of $\y$.  A vector $\y\gneq \0$ is called a weak GPF-eigenvector if $A\y=\hat\rho(A,B)\y$.

 The next lemma discusses connections between $\rho(A,B)$ and $\hat\rho(A,B)$.
 \begin{lemma}\label{Bpos}Let $A,B\in\R_+^{m,n}$.   Then 
 \begin{enumerate}
 \item $\hat\rho(A,B)\le \rho(A,B)$.
 \item Let $\y$ be weak optimal.  If either $A\y>\0$ or $B\y>\0$ then  $\rho(A,B)=\hat\rho(A,B)$ and $\y$ is optimal.
\item Assume that either $A>0$ or $B>0$.  Then $\rho(A,B)=\hat\rho(A,B)$ and each weak optimal $\y$ is optimal.
\item Assume that $0<B_l\in\R^{m\times n}$ for $l\in \N$ and $\lim_{l\to\infty} B_l=B$.
Then 
$\lim_{l\to\infty}\rho(A,B_l)=\hat\rho(A,B)$.
\item Assume that $0<A_l\in\R^{m\times n}$ for $l\in \N$ and $\lim_{l\to\infty} A_l=A$.
If $B$ does not have a zero row then 
$\lim_{l\to\infty}\rho(A_l,B)=\rho(A,B)$.
\end{enumerate}
\end{lemma}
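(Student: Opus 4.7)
Part 1 is immediate from $\{\x:\x>\0\}\subset\R^n_+\setminus\{\0\}$. For part 2, given weakly optimal $\y$, set $\y_k:=\y+\frac1k\1_n>\0$; I first verify $B\y>\0$, which is either the direct hypothesis, or forced by $A\y>\0$ together with the finiteness $r(A,B,\y)=\hat\rho(A,B)<\infty$ (so that $(B\y)_i>0$ wherever $(A\y)_i>0$, i.e., everywhere). Componentwise $(A\y_k)_i/(B\y_k)_i\to(A\y)_i/(B\y)_i$ since the denominator is bounded below by $(B\y)_i$, so $r(A,B,\y_k)\to r(A,B,\y)=\hat\rho(A,B)$; since $\y_k>\0$ this gives $\rho(A,B)\le\hat\rho(A,B)$, which combined with part 1 produces equality, and the sequence $\{\y_k\}$ certifies $\y$ as optimal. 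Part 3 is then immediate: $A>0$ forces $A\y>\0$ and $B>0$ forces $B\y>\0$ for every $\y\gneq\0$.

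For part 4, part 3 gives $\rho(A,B_l)=\hat\rho(A,B_l)$. Upper bound: for fixed $\y\gneq\0$ with $r(A,B,\y)<\infty$, each ratio $(A\y)_i/(B_l\y)_i$ converges to $(A\y)_i/(B\y)_i$---either by continuity when $(B\y)_i>0$, or because $(A\y)_i=(B\y)_i=0$ under the convention $0/0=0$---so $r(A,B_l,\y)\to r(A,B,\y)$, and taking the infimum yields $\limsup_l\rho(A,B_l)\le\hat\rho(A,B)$. Lower bound: a weakly optimal $\y_l\in\Pi_n$ for $(A,B_l)$ has a convergent subsequence $\y_l\to\y$, and passing to the limit in $A\y_l\le\rho(A,B_l)B_l\y_l$ gives $A\y\le(\liminf_l\rho(A,B_l))B\y$, hence $\hat\rho(A,B)\le r(A,B,\y)\le\liminf_l\rho(A,B_l)$.

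Part 5 follows the same template, using $\rho(A_l,B)=\hat\rho(A_l,B)$ from part 3. For $\limsup\le\rho(A,B)$: any $\y>\0$ satisfies $B\y>\0$ since $B$ has no zero row, so $r(A_l,B,\y)\to r(A,B,\y)$, and using Lemma \ref{rhoABachiev}(2) to approximate $\rho(A,B)$ by $r(A,B,\y_k)$ along $\y_k>\0$ closes this direction. The delicate direction, and the main obstacle I expect, is the reverse $\liminf\ge\rho(A,B)$: arguing by contradiction, assuming $\rho(A_l,B)\le\rho(A,B)-\epsilon$ along a subsequence, I pick $\y_l\in\Pi_n^o$ with $r(A_l,B,\y_l)\le\rho(A,B)-\epsilon/2$, pass to a subsequential limit $\y_l\to\y\in\Pi_n$, and obtain $A\y\le(\rho(A,B)-\epsilon/2)B\y$ in the limit. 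If $\y>\0$ this immediately contradicts $\rho(A,B)=\inf_{\x>\0}r(A,B,\x)$. The case when $\y$ has zero coordinates is the technical heart: the identity $(A\y)_i=0$ whenever $(B\y)_i=0$ forces a block-lower-triangular decomposition of $(A,B)$ relative to $\supp\y$, with the lower block $B_{22}$ retaining the no-zero-row property, so one can either directly deduce the contradiction $\rho(A,B)\le\rho(A,B)-\epsilon/2$ via $\rho(A,B)\le\max(\rho(A_{11},B_{11}),\rho(A_{22},B_{22}))$ combined with the estimates $\rho(A_{11},B_{11})\le\rho(A,B)-\epsilon/2$ (from restricting $\y$ to its support) and $\rho(A_{22},B_{22})\le\rho(A,B)$ (from restricting an arbitrary $\x>\0$), or reduce to the strictly smaller subsystem $(A_{22},B_{22})$ and finish by induction on the number of columns.
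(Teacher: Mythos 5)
Parts 1--4 are correct and follow essentially the paper's route. Your part 2 replaces the paper's convex combination $\x(f)=f\x+(1-f)\y$ by the perturbation $\y+\frac{1}{k}\1_n$, which is a clean variant of the same idea; the only loose end is that when only $A\y>\0$ is assumed you invoke ``the finiteness $r(A,B,\y)=\hat\rho(A,B)<\infty$,'' which is not a hypothesis of the lemma --- if $B$ has a zero row then $r(A,B,\y)=\hat\rho(A,B)=\rho(A,B)=\infty$ and the claim holds trivially, but you should say so (the paper does). Part 4 matches the paper: the $\limsup$ bound via a fixed test vector (the paper scales $B\le(1+\varepsilon)B_l$, you use pointwise convergence of the ratios; both work) and the $\liminf$ bound via compactness of the weakly optimal $\y_l$.

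Part 5 is where the proposal goes astray. The direction $\liminf_l\rho(A_l,B)\ge\rho(A,B)$, which you call ``the technical heart,'' is in fact the easy direction: since $A_l>0$ and $A_l\to A$, for every $\varepsilon>0$ one has $A\le(1+\varepsilon)A_l$ entrywise for all large $l$ (trivial on the zero entries of $A$, and by convergence on the positive ones), whence $r(A,B,\x)\le(1+\varepsilon)r(A_l,B,\x)$ for every $\x>\0$ and $\rho(A,B)\le(1+\varepsilon)\rho(A_l,B)$; this is exactly the paper's argument. Of your two proposed ways to close the contradiction, the first is logically broken: the three ingredients $\rho(A,B)\le\max(\rho(A_{11},B_{11}),\rho(A_{22},B_{22}))$, $\rho(A_{11},B_{11})\le\rho(A,B)-\epsilon/2$ and $\rho(A_{22},B_{22})\le\rho(A,B)$ combine only to $\rho(A,B)\le\max(\rho(A,B)-\epsilon/2,\,\rho(A,B))=\rho(A,B)$, which is a tautology, not the claimed contradiction $\rho(A,B)\le\rho(A,B)-\epsilon/2$ (the case $\rho(A_{22},B_{22})=\rho(A,B)$ is not excluded). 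Your second alternative --- conclude $\rho(A,B)=\rho(A_{22},B_{22})$, check that $\rho((A_l)_{22},B_{22})\le\rho(A_l,B)$ and that $B_{22}$ still has no zero row, and induct on $n$ --- can be completed, but it reproduces the machinery of the paper's proof of Lemma \ref{rhoABachiev}(2) to establish something that the one-line monotonicity estimate gives for free. Conversely, your $\limsup$ direction for part 5 is fine and is the one the paper spends effort on (via the decomposition $A_l=C_l+D_l$ and the bound $D_l\x(\varepsilon)\le\varepsilon B\x(\varepsilon)$); your version, using that $B\y>\0$ for $\y>\0$ when $B$ has no zero row, is simpler and correct.
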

\begin{proof}  \emph{(1)} The inequality $\hat\rho(A,B)\le \rho(A,B)$ is clear. 

\noindent
\emph{(2)}    Let $\y\in\Pi_n$ be weak optimal.  So $r(A,B,\y)=\hat\rho(A,B)$.
Assume first that $B\y>0$.  
For $t>0$ define $\y(t)=\y+t\1_n$.  Note that $\lim_{t\searrow 0} \y(t)=\y$.  As $(B\y)_i>0$ for $i\in[n]$ it follows that  $\lim_{t\searrow 0} \frac{(A\y(t))_i}{(B\y(t))_i}=\frac{(A\y)_i}{(B\y)_i}$.  Hence 
$$\rho(A,B)\le \lim _{t\searrow 0}r(A,B,\y(t))=r(A,B,\y)=\hat\rho(A,B)\Rightarrow \rho(A,B)=\hat\rho(A,B),$$
and $\y$ is optimal.

Assume second that $A\y>0$.  Suppose that $B$ has a zero row.  Then $\rho(A,B)=\hat\rho(A,B)=\infty$ and $\y$ is optimal.  Assume now that $B$ does not have a zero row. Then $\rho(A,B)<\infty$.  As $A\y\le \hat\rho(A,B)B\y$ it follows that $\hat\rho(A,B)>0$ and $B\y>\0$.  Therefore the first case yields $\rho(A,B)=\hat\rho(A,B)$ and $\y$ is optimal.

\noindent
\emph{(3)}  This claim follows from part \emph{2}.

\noindent
\emph{(4)}  Assume that we have a sequence $\x_l\in\Pi_n$ such that $r(A,B_l)=r(A,B_l,\x_l)$ for each $l\in\N$.   Denote $t=\limsup r(A,B_l), \; s=\liminf r(A,B_l)$.  
 Let $B_l=C_l+D_l, C_l,D_l\ge 0$, where $\supp C_l=\supp B$ and $\supp C_l\cap\supp D_l=\emptyset$.   Clearly, $\lim_{l\to\infty}C_l=B$ and $\lim_{l\to\infty} D_l=0$.  Fix $\varepsilon>0$. Then there exists $M(\varepsilon)$ such that $B\le (1+\varepsilon)C_l$ for $l>M(\varepsilon)$.  
 In particular, $B\le (1+\varepsilon)B_l$ for $l>M(\varepsilon)$.  Hence
 \[\frac{1}{1+\varepsilon}r(A,B_l,\x) =r(A,(1+\varepsilon)B_l,\x)\le r(A,B,\x).\]  
 Therefore
 \[\frac{1}{1+\varepsilon}\rho(A,B_l)=\frac{1}{1+\varepsilon}\hat\rho(A,B_l)=\hat\rho(A,(1+\varepsilon)B_l)\le \hat\rho(A,B) \textrm{ for } l>M(\varepsilon).\]
 As $\varepsilon>0$ was chose arbitrary it follows that $t\le \hat\rho(A,B)$.
 
 There exists a subsequence $1\le l_1<l_2<\cdots$ such that $\lim_{k\to\infty} \rho(A,B_{l_k})=s$ and $\lim_{k\to\infty} \x_{l_k}=\x\in\Pi_n$.  The inequality
 $A\x_{l_k}\le r(A,B_{l_k})B_{l_k}\x_{l_k}$ yield $A\x\le sB\x$.  (We may have that $s=\infty$.)  Suppose first that $s<\infty$. Hence $s\ge \hat\rho(A,B)$. 
 Combine that with the inequality $t\le \hat\rho(A,B)$ to deduce that $s=t=\hat\rho(A,B)$.
 Assume second that $s=\infty$.  Then $t=\infty$ and the inequality $t\le \hat\rho(A,B)$
 yields that $\hat\rho(A,B)=\infty$. 
 
 \noindent
 \emph{(5)}   Denote $t=\limsup r(A_l,B), \; s=\liminf r(A_l,B)$.   Let $A_l=C_l+D_l, C_l,D_l\ge 0$, where $\supp C_l=\supp A$ and $\supp C_l\cap\supp D_l=\emptyset$.   Clearly, $\lim_{l\to\infty}C_l=A$ and $\lim_{l\to\infty} D_l=0$.  Fix $\varepsilon\in (0,1)$ in the rest of the proof. Then there exists $M(\varepsilon)$ such that $(1-\varepsilon)C_l\le A\le (1+\varepsilon)C_l$ for $l>M(\varepsilon)$.  
 In particular, $A\le (1+\varepsilon)A_l$ for $l>M(\varepsilon)$.  Hence for $\x\gneq \0$
 \[(1+\varepsilon)r(A_l,B,\x) =r((1+\varepsilon)A_l,B,\x)\ge r(A,B,\x).\]  
 Therefore
 \[(1+\varepsilon)\rho(A_l,B)=\rho((1+\varepsilon)A_l,B)\ge \rho(A,B) \textrm{ for } l>M(\varepsilon).\]
 As $\varepsilon>0$ was chosen arbitrary it follows that $s\ge \rho(A,B)$.
 As $B$ does not have zero row it follows that $\rho(A,B)\le r(A,B,\1_n)<\infty$.  Let $\x(\varepsilon)\in\Pi_n^o$ satisfy $A\x(\varepsilon)\le (\rho(A,B)+\varepsilon)B\x(\varepsilon)$.  As $B$ does not have a zero row it follows that $B\x(\varepsilon)>\0$.  Since $\lim_{l\to\infty} D_l=0$  there exists $L(\varepsilon)\ge M(\varepsilon)$ such
 that $D_l\x(\varepsilon)\le \varepsilon B\x(\varepsilon)$ for $l> L(\varepsilon)$.  Then
 \begin{eqnarray*}
 &&A_l\x(\varepsilon)=C_l\x(\varepsilon)+D_l\x(\varepsilon)\le (1-\varepsilon)^{-1} A\x(\varepsilon)+\varepsilon B\x(\varepsilon)\le\\
 &&((1-\varepsilon)^{-1}(\rho(A,B)+\varepsilon)+\varepsilon)B\x(\varepsilon) \textrm{ for } l> L(\varepsilon).
 \end{eqnarray*}
 That is, $\rho(A_l,B)\le ((1-\varepsilon)^{-1}(\rho(A,B)+\varepsilon)$ for $l>L(\varepsilon)$.  Therefore $t\le  ((1-\varepsilon)^{-1}(\rho(A,B)+\varepsilon)+\varepsilon)$.  As $\varepsilon$ was an arbitrary number in the open interval $(0,1)$ it follows that $t\le \rho(A,B)$.  
 Combine that with the inequality $s\ge \rho(A,B)$ to deduce that $s=t=\rho(A,B)$.
 \end{proof}
 \section{The classical case}\label{sec:clas}
 In this section we discuss the case where $m=n$ and $B$ is the identity matrix $I$.
 We first recall some basic results on directed graphs $ \vec{G}=(V,\vec{E})$.
 Here $V$ is a finite set of vertices and $\vec{E}\subseteq V\times V$ is the set of diedges of $\vec{G}$.  An ordered tuple $(v,w)\in \vec{E}$ is a diedge from $v$ to $w$.
 The diedge $(v,v)$ is called a loop.  A dipath $\vec{P}$ in $\vec{G}$ is an ordered set of diedges $\{(v_1,v_{2}), (v_2,v_3),\ldots,(v_p,v_{p+1})\}$ for a positive integer $p$.  
 A dipath $\vec{P}$ is closed if $v_{p+1}=v_1$.  The digraph $\vec{G}$ is called acyclic or diforest if $\vec{G}$ does not have a closed path.
 The digraph $\vec{G}$ is called strongly connected if for any two distinct vertices $v,w\in V$ there is a dipath in $\vec{G}$ from $v$ to $w$. (A digraph on one vertex with no diedge is strongly connected.)
 
  For $W\subseteq V$ we define the induced subdigraph $\vec{G}(W)=(W,\vec{E}(W))$, where $\vec{E}(W)$ is the set of diedges in $\vec{E}$ that connects two vertices in $W$.  Assume that $\vec{G}$ is not strongly connected.  Then the subgraph $\vec{G}(W)$ is called strongly connected component of $\vec{G}$ if the subgraph $\vec{G}(W)$ is strongly connected but $\vec{G}(U)$  is not strongly connected for each $U$ that strictly contains $W$.  Let $V=\cup_{i=1}^k V_i$ be the partition of $V$ corresponding to the strongly connected components of $\vec{G}$.  That is $\vec{G}(V_i)$ for $i\in[k]$ are all strongly connected components of $\vec{G}$.  
  The reduced digraph $\vec{G}_{red}=(V_{red},\vec{E}_{red})$ of $\vec{G}$ is defined as follows:  First $V_{red}=\{\{V_1\},\ldots,,\{V_k\}\}$.  Second a diedge $(\{V_i\},\{V_j\})$ is in $\vec{E}_{red}$ if $i\ne j$ and there is a diedge in $\vec{E}$ from $V_i$ to $V_j$.  For a strongly connected digraph $\vec{G}$ we let $V_{red}=\{\{V\}\}$ and $\vec{E}_{red}=\emptyset$.  It is straightforward to show that $\vec{G}_{red}$ is acyclic.  With each digraph we associated an undirected graph $G=(V,E)$, where undirected edge $\{i,j\}$ is in $E$ if either $(i,j)$ or $(j,i)$ in $\vec{E}$.  Then $G$ is a union of its connected components $G(W_j), j\in[c]$, where each $V_i$ is a subset of some $W_j$.  Clearly, each $W_j$ is union of some subsets $V_1,\ldots,V_k$.  The subset $W_j$ induces a subdigraph $\vec{G}(W_j)$.  Observe that $\vec{G}(W_j)$ induces a reduced digraph $\vec{G}(W_j)_{red}$, which is a subdigraph of the reduced graph of $\vec{G}_{red}$.  The subdigraph $\vec{G}(W_j)_{red}$ is called a ditree of $\vec{G}_{red}$.  A vertex $\{V_p\}$ is called a \emph{source} of $\vec{G}_{red}$ if there is no $\{V_q\}$ such that there is a diedge in $\vec{G}_{red}$ from $\{V_q\}$ to $\{V_p\}$.  A vertex $\{V_p\}$ is called a \emph{sink} of $\vec{G}_{red}$ if there is no $\{V_q\}$ such that there is a diedge in $\vec{G}_{red}$ from $\{V_p\}$ to $\{V_q\}$.
  Clearly, each ditree $\vec{G}(W_j)_{red}$ contains at least one source $\{V_p\}$ and one sink $\{V_q\}$.  Denote by $\cR(\vec{G})\subseteq [k]$ all indices $p\in[k]$ such that $\{V_p\}$ is a source.
  
Let $A=[a_{ij}]\in\R_+^{m\times m}$.  One associates with $A$ the digraph $\vec{G}(A)=([m], \vec{E}(A))$.   A diedge $(i,j)$ is in $\vec{E}(A)$ if and only if $a_{ij}>0$.  
$A$ is called \emph{irreducible} if $\vec{G}(A)$ is strongly connected.  Let $G(A)$ be the induced undirected graph.  Then after renaming the indices $A$ is  pemutationally similar to a block diagonal matrix $\diag(A_1,\ldots,A_c)$ if and only if $G(A)$ has $c$-connected components. Each $A_j$ corresponds to $\vec{G}(W_j)$.
For each nonzero subset $U \subseteq [m]$ denote by $A[U]$ the submatrix $[a_{ij}]_{i,j\in U}$.  Each $A_j$ is can be assumed to be in the Frobenius normal form \cite[Theorem 6.4.4]{Fri15}.  It is a block upper triangular form, where each diagonal block is an irreducible matrix $A[V_i]$, where $V_i\subseteq W_j$.  (The order of the diagonal blocks depends on the labeling of the vertices of the ditree $\vec{G}(W_j)_{red}$. One such labeling is given in \cite[Theorem 6.4.4]{Fri15}.)  We assume that the top diagonal block of $A[W_j]$ is $A[V_p]$ where $\{V_p\}$ is a source in the ditree $\vec{G}(W_j)_{red}$ and the bottom diagonal block is $A[V_q]$ where $\{V_q\}$ is a sink in $\vec{G}(W_j)_{red}$.  Furthermore, every source $\{V_p\}$ in can be chosen to be  the top diagonal block in $A[W_j]$.  In particular 
\[\rho(A)=\max_{i\in[k]}\rho(A[V_i]), \quad \rho(A[W_j])=\max_{V_i\subseteq W_j}\rho(A[V_i]).\]

We first bring the well known result due to Wielandt \cite{Wie50}.
\begin{lemma}\label{Wiel}  Let $A\in\R^{m\times m}_+$ be an irreducible matrix.  
Then 
$\rho(A,I)=\hat\rho(A,I)=\rho(A)$.
Furthermore, in characterization \eqref{ColWiel} equality holds if and only if $\x$ is the PF-eigenvector of $A$.
\end{lemma}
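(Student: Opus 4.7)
The plan is to use both the right and the left Perron--Frobenius eigenvectors of the irreducible matrix $A$, together with a simple pairing argument, to squeeze $\rho(A,I)$ and $\hat\rho(A,I)$ against $\rho(A)$ from both sides. Since $A$ is irreducible, the classical Perron--Frobenius theorem guarantees a positive right eigenvector $\y>\0$ and a positive left eigenvector $\z>\0$ satisfying $A\y=\rho(A)\y$ and $\z\trans A=\rho(A)\z\trans$; both facts will be used.

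First I would establish the upper bound. Taking $\y>\0$ as a test vector in \eqref{charrhoAB} with $B=I$ gives $r(A,I,\y)=\max_i (A\y)_i/y_i=\rho(A)$, so immediately $\rho(A,I)\le\rho(A)$ and, since $\y\in\R^m_+\setminus\{\0\}$, also $\hat\rho(A,I)\le\rho(A)$.

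Next I would prove the matching lower bound for $\hat\rho(A,I)$. Let $\x\gneq\0$ be any vector with $r(A,I,\x)=t<\infty$, i.e.\ $A\x\le t\x$. Left-multiplying by the positive row vector $\z\trans$ yields
\[
\rho(A)\,\z\trans\x \;=\; \z\trans A\x \;\le\; t\,\z\trans\x.
\]
Because $\z>\0$ and $\x\gneq\0$ we have $\z\trans\x>0$, and therefore $t\ge\rho(A)$. Taking the infimum over admissible $\x$ gives $\hat\rho(A,I)\ge\rho(A)$. Combining with the chain $\hat\rho(A,I)\le\rho(A,I)\le\rho(A)$ from Step~1 and part~\emph{1} of Lemma~\ref{Bpos} yields the desired three-way equality.

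Finally, I would treat the equality case in \eqref{ColWiel}. If $\x>\0$ attains $\max_i(A\x)_i/x_i=\rho(A)$, then the nonnegative vector $\uu:=\rho(A)\x-A\x\ge\0$ satisfies
\[
\z\trans\uu \;=\; \rho(A)\,\z\trans\x-\z\trans A\x \;=\; \rho(A)\,\z\trans\x-\rho(A)\,\z\trans\x \;=\; 0.
\]
Since $\z>\0$ and $\uu\ge\0$, this forces $\uu=\0$, i.e.\ $A\x=\rho(A)\x$. Irreducibility of $A$ then forces $\x$ to be a positive scalar multiple of the PF-eigenvector $\y$. The converse is immediate: if $A\x=\rho(A)\x$ with $\x>\0$, every ratio $(A\x)_i/x_i$ equals $\rho(A)$, so the maximum equals $\rho(A)$ and the infimum in \eqref{ColWiel} is attained.

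I do not foresee a genuine obstacle here, since irreducibility supplies both the right and the left positive eigenvectors that the pairing argument requires; the only care needed is to use the left eigenvector $\z>\0$ (rather than $\y$) in the lower-bound step so that the bound holds for every $\x\gneq\0$, not just for strictly positive test vectors, which is what gives $\hat\rho(A,I)=\rho(A)$ in addition to $\rho(A,I)=\rho(A)$.
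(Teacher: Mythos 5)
Your proof is correct, but it takes a different route from the paper. The paper's own proof simply cites Wielandt for the equality $\rho(A,I)=\rho(A)$ and for the characterization of the equality case in \eqref{ColWiel}, and then proves only the remaining identity $\hat\rho(A,I)=\rho(A,I)$ by a combinatorial support argument: if $\x\gneq\0$ has $|\supp\x|<m$, strong connectivity of $\vec{G}(A)$ supplies an edge $(i,j)$ with $x_i=0$ and $(A\x)_i>0$, so $r(A,I,\x)=\infty$ and the infimum over $\x\gneq\0$ effectively ranges only over $\x>\0$. You instead give a self-contained proof of all three assertions via the pairing with a strictly positive \emph{left} PF-eigenvector $\z\trans A=\rho(A)\z\trans$: this yields the lower bound $t\ge\rho(A)$ for every $\x\gneq\0$ at once (since $\z\trans\x>0$), and the same pairing settles the equality case by forcing $\rho(A)\x-A\x=\0$. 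Your argument is essentially the Collatz-type duality argument that the paper uses later in the proof of part \emph{1} of Theorem \ref{charrhoAI}; the gain from irreducibility in your version is precisely that $\z$ can be taken strictly positive, which is what extends the lower bound from $\x>\0$ to all of $\R^m_+\setminus\{\0\}$ and makes the citation of Wielandt unnecessary. The paper's support argument, by contrast, yields the stronger structural fact that $r(A,I,\x)=\infty$ off the open cone, which is reused in the proof of part \emph{4} of Theorem \ref{charrhoAI}. Both proofs are sound; yours is more self-contained, the paper's exposes information used downstream.
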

\begin{proof}  The equality $\rho(A,I)=\rho(A)$, i.e., the characterization \eqref{ColWiel} was proved by Wielandt \cite{Wie50}.  Wielandt also showed that  equality in \eqref{ColWiel} holds  if and only if $\x$ is the PF-eigenvector of $A$.
The equality $\rho(A,I)=\hat\rho(A,I)$ follows from the following observation:  Assume that $\x=(x_1,\ldots,x_m)\trans\gneq 0$ and $1\le |\supp \x|<m$.  Then $r(A,I,\x)=\infty$.  Indeed, let $\cI=[m]\setminus \supp \x$.  As $\vec{G}(A)$ is irreducible there exists $(i,j)\in \vec{E}(A)$ such that $i\in\cI$ and $j\in \supp \x$.  Hence $a_{ij}>0$.  Therefore $x_i=0$ and $(A\x)_i>0$, which yield that $\frac{(A\x)_i}{x_i}=\infty$.
\end{proof}
\begin{theorem}\label{charrhoAI}  Assume that $A\in\R^{m\times m}_+$.  Then
\begin{enumerate}
\item
$\rho(A,I)=\rho(A)$, and  $A\y=\rho(A)\y$ for some $\y\gneq \0$. 
\item There exists $\x=(x_1,\ldots,x_m)\trans >\0$ such that $\rho(A)=\max_{i\in[m]} \frac{(A\x)_i}{x_i}$ if and only if the following condition hold:
 Let $\vec{G}_{red}$ be the induced reduced graph by $A$, with the set of vertices $\{\{V_1\},\ldots,\{V_k\}\}$.  The equality $\rho(A[V_j])=\rho(A)$ implies that $\{V_j\}$ is a sink of $\vec{G}_{red}$. 
\item The vector $\x\in\Pi_m^o$ such that $\rho(A)=\max_{i\in[m]} \frac{(A\x)_i}{x_i}$ is unique if and only if $A$ is irreducible.
\item $\hat\rho(A,I)=\min_{i\in\cR(\vec{G}(A))}\rho(A[V_i])$ and  $A\w=\hat\rho(A,I)\w$  for some  $\w\gneq \0$.
\end{enumerate}
\end{theorem}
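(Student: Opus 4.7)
The overall strategy is to reduce $A$ to Frobenius normal form so that $A$ is block upper triangular with irreducible diagonal blocks $A[V_1],\ldots,A[V_k]$ indexed in a topological order of the reduced DAG $\vec{G}_{red}$, and then to combine Lemma \ref{Wiel} (Wielandt) with a perturbation argument and a block-wise inductive construction.

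For part 1, I use perturbation. Set $A_\varepsilon:=A+\varepsilon J$, where $J\in\R_+^{m\times m}$ has every entry equal to $1$. Since $A_\varepsilon>0$ it is irreducible, so it has a strictly positive PF eigenvector $\y_\varepsilon\in\Pi_m^o$ with $A_\varepsilon\y_\varepsilon=\rho(A_\varepsilon)\y_\varepsilon$. By compactness pass to a subsequence along which $\y_\varepsilon\to\y\in\Pi_m$ and $\rho(A_\varepsilon)\to\rho(A)$ (by continuity of the spectral radius), producing a nonzero nonnegative $\y$ with $A\y=\rho(A)\y$. The bound $\rho(A,I)\le r(A,I,\y_\varepsilon)\le r(A_\varepsilon,I,\y_\varepsilon)=\rho(A_\varepsilon)\to\rho(A)$ gives one direction, and the reverse $\rho(A,I)\ge\rho(A)$ is the standard observation that $A\x\le t\x$ with $\x>\0$ iterates to $A^k\x\le t^k\x$, forcing $\rho(A)\le t$. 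Part 3 is then immediate from Lemma \ref{Wiel}: for irreducible $A$ the CW equality is attained iff $\x$ is the PF eigenvector of $A$, whose normalization in $\Pi_m^o$ is unique.

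For the forward direction of part 2 I argue by contraposition. If some $V_j$ with $\rho(A[V_j])=\rho(A)$ is not a sink, choose $V_l\ne V_j$ with $A[V_j,V_l]\ne 0$; then for any $\x>\0$ the block identity $(A\x)|_{V_j}=A[V_j]\x|_{V_j}+\sum_{q\ne j}A[V_j,V_q]\x|_{V_q}$ has the sum $\gneq\0$ since $\x|_{V_l}>\0$, so the constraint $(A\x)|_{V_j}\le\rho(A)\x|_{V_j}$ reduces to $A[V_j]\x|_{V_j}\le\rho(A)\x|_{V_j}$ with strict inequality in some coordinate, and pairing with the positive left PF eigenvector of irreducible $A[V_j]$ then forces $\rho(A[V_j])<\rho(A)$, a contradiction. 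For the reverse direction I build $\x>\0$ inductively, processing blocks in reverse topological order of $\vec{G}_{red}$ (from sinks upward). Once $\x|_{V_q}>\0$ is defined for every $V_q$ below $V_p$ in this order, set $\vv_p:=\sum_{q\ne p}A[V_p,V_q]\x|_{V_q}$; if $V_p$ is a sink then $\vv_p=\0$ and I take $\x|_{V_p}$ to be the PF eigenvector of $A[V_p]$ (feasible since $\rho(A[V_p])\le\rho(A)$), while if $V_p$ is a non-sink the standing hypothesis forces $\rho(A[V_p])<\rho(A)$, so by the Neumann expansion $(\rho(A)I-A[V_p])^{-1}=\rho(A)^{-1}\sum_{n\ge 0}(A[V_p]/\rho(A))^n$ the inverse exists, and irreducibility of $A[V_p]$ makes it strictly positive (via $(I+A[V_p])^{|V_p|-1}>0$); I then put $\x|_{V_p}:=(\rho(A)I-A[V_p])^{-1}\vv_p$, which is strictly positive because $\vv_p\gneq\0$ (some $A[V_p,V_l]\x|_{V_l}\gneq\0$ by the non-sink hypothesis), and a direct computation yields $(A\x)|_{V_p}=\rho(A)\x|_{V_p}$.

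Part 4 completes the picture. The upper bound $\hat\rho(A,I)\le\min_{i\in\cR(\vec{G}(A))}\rho(A[V_i])$ is witnessed by taking, for each source $V_p$, the PF eigenvector of $A[V_p]$ extended by zero outside $V_p$: the source property means no edges from outside $V_p$ land in $V_p$, so $(A\x)_i=0$ whenever $x_i=0$ and $r(A,I,\x)=\rho(A[V_p])$; choosing $V_{p^*}$ to achieve the minimum also supplies the desired $\w\gneq\0$ with $A\w=\hat\rho(A,I)\w$. For the matching lower bound, given any $\x\gneq\0$ with $r(A,I,\x)<\infty$, I set $S=\supp\x$; the finiteness condition $(A\x)_i=0$ for $i\notin S$ together with strong connectivity of each $V_q$ forces $S$ to be a union of full strong components (any dipath inside some $V_q$ crossing from $S^c$ into $S$ would create a positive $(A\x)_i$ with $x_i=0$), and absence of edges from $S^c$ into $S$ makes $S$ upward closed in $\vec{G}_{red}$, so $S$ contains some source $V_{p^*}$; dropping the nonnegative off-diagonal terms in the block-$V_{p^*}$ constraint yields $A[V_{p^*}]\x|_{V_{p^*}}\le r(A,I,\x)\x|_{V_{p^*}}$, and Wielandt's lemma then gives $\rho(A[V_{p^*}])\le r(A,I,\x)$, whence $\min_{i\in\cR(\vec{G}(A))}\rho(A[V_i])\le\hat\rho(A,I)$. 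I expect the main technical obstacle to be the inductive construction in part 2, specifically ensuring strict (not merely nonnegative) positivity of $(\rho(A)I-A[V_p])^{-1}$ from the irreducibility of $A[V_p]$, since this strict positivity is what keeps $\x|_{V_p}>\0$ and lets the induction proceed up the DAG.
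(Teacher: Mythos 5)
Your proof is correct and follows essentially the same route as the paper's: the all-ones perturbation for part 1, the block-wise construction of $\x$ along the Frobenius normal form (sinks get PF-eigenvectors, non-sinks get $(\rho(A)I-A[V_p])^{-1}\vv_p$) for part 2, and the analysis showing $\supp\x$ is a predecessor-closed union of strong components containing a source for part 4. The only immaterial deviations are that you certify $\rho(A,I)\ge\rho(A)$ by iterating $A\x\le t\x$ rather than pairing with a left PF-eigenvector, you prove the forward direction of part 2 by contraposition with a left-eigenvector pairing instead of Wielandt's equality case, and you fix $t_p=\rho(A)$ where the paper allows any $t_p\in(\rho(A[V_p]),\rho(A)]$.
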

 \begin{proof}  \emph{(1)}  Recall the Perron's result that claims that a positive square matrix $C$ has a positive eigenvector $\uu$ corresponding to $\rho(C)$.  Assume that $B\in\R_+^{m\times m}$.  Let $J\in\R^{m\times m}_+$ be a matrix whose all entries are $1$.  Set $B_l=B+\frac{1}{l}J$ for each positive integer $l$.  Perron's theorem implies the existence of $\uu_l\in\Pi_m^{o}$ such that $B_l\uu_l=\rho(B_l)\uu_l$.  Since $ \Pi_m$ is compact there is a subsequence of $\{\uu_l\}$ which converges to $\uu\in \Pi_m$.  Clearly, $\lim_{l\to\infty}\rho(B_l)=\rho(B)$.  Hence $B\uu=\rho(B)\uu$.  Choose $B=A$.  Then $r(A,I,\uu_l)<\rho(A_l,I.\uu_l)=\rho(A_l)$.  Therefore  $\rho(A,I)\le \rho(A)$.  Furthermore $A$ has a nonnegative eigenvector $\y$ corresponding to $A$.  Let $\x>\0$.  We show that $r(A,I,\x)\ge \rho(A)$.  Assume that $A\trans \uu=\rho(A)\uu$, where $\uu\gneq \0$.  Then 
 $\uu\trans(r(A,I,\x) \x)\ge \uu\trans A\x=\rho(A)\uu\trans \x$.  As $\x>\0$ we deduce that $\uu\trans \x>0$.  Hence $r(A,I)\ge \rho(A)$.   (This argument is in \cite{Col42}.)
 Hence $\rho(A,I)\ge \rho(A)$ and $\rho(A,I)=\rho(A)$.
 
 \noindent
 \emph{(2)}
 Assume that there exists $\x=(x_1,\ldots,x_m)\trans >\0$ such that
 
 \noindent
$\rho(A)=\max_{i\in[m]} \frac{(A\x)_i}{x_i}$.  Let $\{\{V_1\},\ldots,\{V_k\}\}$ be the vertices of the reduced graph induced by $A$.  Suppose that $\rho(A)=\rho(A[V_j])$.  Let $\z=(z_1,\ldots,z_l)\trans\in \R_+^{|V_j|}$ be the subvector of $\x$ restricted to the set $V_j$.  Hence 
\[\rho(A)=\rho(A[V_j])\le \max_{i\in [|V_j|]}\frac{(A[V_j]\z)_i}{z_i}\le \max_{i\in V_j} \frac{(A\x)_i}{x_i}\le \rho(A).\]  
Therefore all the inequalities are equalities.  The first equality and 
Lemma \ref{Wiel} yield that $A[V_j]\z=\rho(A[V_j])\z$.  The second equality yields that $V_j$ is a sink.  

Vice versa assume that  $\rho(A[V_j])=\rho(A)$ if and only if $\{V_j\}$ is a sink of $\vec{G}_{red}$. 
We now show that there exists $\x=(x_1,\ldots,x_m)\trans >\0$ such that equality holds in \eqref{ColWiel}. Without loss of generality we may assume that $\rho(A)>0$.  (Otherwise  $A=0$.)   
Let $\cC_0(\vec{G}_{red})\subseteq\{\{V_1\},\ldots,\{V_k\}\}$ be the set of the sink vertices in $\vec{G}_{red}$. 
Assume that $\{V_j\}$ is a sink.  Then the restriction of $\x$ to $V_j$ is the PF-eigenvector $\x_j>\0$ of $A[V_j]$.  If $\cC_0(\vec{G}_{red})=\{\{V_1\},\ldots,\{V_k\}\}$
we easily deduce that equality holds in \eqref{ColWiel} for this $\x$.  If not let us consider the subsets   $\cC_l(\vec{G}_{red})\subset\{\{V_1\},\ldots,\{V_k\}\}$ for $l=0,1,\ldots,p$, which is a partition of $\{\{V_1\},\ldots,\{V_k\}\}$, defined as follows.  For $l\ge 1$ the set $\cC_l(\vec{G}_{red})\subset\{\{V_1\},\ldots,\{V_k\}\}$ consists of vertices in $\vec{G}_{red}$ with the maximal path length $l$ in the digraph $\vec{G}_{red}$ starting at these vertices.
Thus the diedges in $\vec{G}_{red}$ from $\cC_l(\vec{G}_{red})$ go only to $\cC_{r}(\vec{G}_{red})$ for $r=0,1,\ldots,l-1$, and  each $\{V_j\}\in \cC_l(\vec{G}_{red})$ has at least one diedge.  (This corresponds to the Frobenius normal form given in  \cite[Theorem 6.4.4]{Fri15}.)  
Suppose that we already determined the restriction of $\x$ to $\{V_q\}\in \cC_{r}(\vec{G}_{red})$ for $r\le l-1$, which is denoted by $\x_q$, where $\x_q>0$. We now show how to determine $\x_j$ for each $\{V_j\}\in \cC_{l}(\vec{G}_{red})$.
 Recall that $\rho(A[V_j])<\rho(A)$.  Choose $t_j\in(\rho(A[V_j]),\rho(A)]$.  We now determine $\x_j$ by the condition $(A\x)[V_j]=t_j\x_j$.  Denote by $A[V_j,V_q]$ the restriction of $A$ to rows in the set $V_j$ and columns in the set in $V_q$.  Then the above condition is equivalent to
 \[(t_jI_{V_j}-A[V_j])\x_j=\sum_{V_q\in\cup_{r=0}^{l-1} \cC_{r}(\vec{G}_{red})}A[V_j,V_q]\x_q.\]
 Since all $\x_q>\0$ for $V_q\in\cup_{r=0}^{l-1} \cC_{r}(\vec{G}_{red})$ and at least one of $A[V_j,V_q]$ is a nonzero nonnegative matrix it follows that the right  hand side in the above equality is a nonzero nonnegative vector.  As $t_j>\rho(A(V_j))$ and $A[V_j]$ is irreducible it follows that $(t_jI_{V_j}-A[V_j])^{-1}$ is a positive matrix \cite[Lemma 6.4.3]{Fri15}.  Therefore 
  \[\x_j=(t_jI_{V_j}-A(V_j))^{-1}\sum_{V_q\in\cup_{r=0}^{l-1} \cC_{r}(\vec{G}_{red})}A[V_j,V_q]\x_q>\0.\] 
This shows that the constructed $\x$ is a positive vector.  It is left to show that $\rho(A)=\max_{i\in[m]} \frac{(A\x)_i}{x_i}$.  Assume that $i\in V_j$.  Then our construction gives that $\frac{(A\x)_i}{x_i}=t_j\le \rho(A)$.  For a sink $\{V_j\}$ such that $\rho(A(V_j))=\rho(A)$ we have that $t_j=\rho(A)$.  Hence $\rho(A)=\max_{i\in[m]} \frac{(A\x)_i}{x_i}$.
 
 \noindent
 \emph{(3)}  The proof of \emph{2} shows that $\x\in\Pi_m^{o}$ is unique if and only if 
 $A$ is an irreducible matrix.
 
 \noindent
 \emph{(4)} Clearly, it is enough to show the equality $\hat\rho(A,I)=\min_{i\in\cR(\vec{G}(A))}\rho(A[V_i])$  in the case where $\vec{G}(A)_{red}$ is a ditree, that is, $G(A)$ is a connected graph.  Let $\{V_i\}$ be a source of $\vec{G}(A)_{red}$.  So we can choose a Frobenius normal form so that the irreducible matrix $A[V_i]$ appears in the first diagonal block of the Frobenius normal form $F$ of $A$.  Let $A[V_i]\z=\rho(A[V_i])\z$ where $\z>0$.  Extend $\z$ to $\R_+^m$ by addind zero entries for indices $\{|V_i|+1,\ldots,m\}$ to obtain the vector $\vv\in \R^m_+$.  Then $F\vv=\rho(A[U_i])\vv$.  
 Rename the name of the indices to deduce that $A\w=\rho(A[U_i])\w$.  Hence $r(A,I,\w)=\rho(A[U_i])$ and $\hat\rho(A,I)\le \rho(A[U_i])$.  This shows that $\hat\rho(A,I)\le \alpha=\min_{i\in\cR(\vec{G}(A))}\rho(A[V_i])$. 
 
 It is left to show the reverse inequality  $r(A,I,\x)\ge \alpha$ for each $\x\gneq \0$ such that $r(A,I,\x)<\infty$.
 Suppose first that $\x>\0$.  The above arguments yield that $r(A,I,\x)\ge \rho(A)\ge \alpha$.  Assume now that $\supp \x$ is a strict subset of $[m]$.   Let $\cI\subseteq [k]$ the the set of all $i\in[k]$ such that $\supp \x\cap V_i\ne \emptyset$.  We claim that for each $i\in\cI$ we have the equality $\supp \x\cap V_i= V_i$.  Indeed assume that $\supp \x \cap V_i$ is a strict subset of $V_i$.  Let $\x_i=\x[V_i]\in\R_+^{|V_i|}$ be the restriction of $\x$ to $V_i$. Thus $\x_i\gneq 0$.  Since $A[V_i]$ is an irreducible matrix, the proof of Lemma \ref{Wiel} yield that there exists $p\in V_i\setminus\supp \x$ such that $(A[V_i]\x_i)_p>0$.  As $(A\x)_p\ge (A[V_i]\x_i)_p$ it follows that $\frac{(A\x)_p}{x_p}=\frac{+}{0}=\infty$.
 So $r(A,I,\x)=\infty$ contrary to our assumption. Assume that $\{V_i\}$ is a source in $\vec{G}(A)_{red}$.  Clearly  $r(A,I,\x)\ge r(A[V_i],I, \x_i)$.
 Lemma \ref{Wiel} yields that $ r(A[V_i],I, \x_i)\ge \rho(A[V_i])$.  Hence  $r(A[V_i],I, \x_i)\ge \alpha$.  Assume that $\{V_i\}$ is not a source.  Hence there is $j\in [k]$ such that $(j,i)\in \vec{E}(\vec{G}(A)_{red})$.  We claim that $\supp \x\cap V_j=V_j$.  Suppose not.  So $\supp \x\cap V_j=\emptyset$.  From the definition of the reduced graph $\vec{G}(A)_{red}$ it follows there is $p\in V_j$ and $q\in V_i$ such that $a_{pq}>0$.  Hence $(A\x)_p>0$ and $x_p=0$.  Thus $r(A,I,\x)=\infty$ whi‎ch contradicts our assumption.  Thus $\supp \x\cap V_j=V_j$.  Repeating this argument a number of steps we deduce that there is a source vertex $\{V_l\}$ in $\vec{G}(A)_{red}$ such that $\supp\x\cap V_l=V_l$.  The previous arguments yield that $r(A,I,\x)\ge \rho (A[V_l])\ge \alpha$.
 \end{proof}
 
 We remark that part \emph{1} of this theorem is well known, part \emph{2} is close to \cite[Theorem 6.4.5]{Fri15}, part \emph{3} is perhaps known, and part \emph{4} seems to be new.
 
 \begin{example} Let $A=\left[\begin{array}{ccc}a&1&1\\0&1&1\\0&0&1+a \end{array}\right]$, where $a\in(0,1]$, and $B=I_3$.  Then 
 \begin{enumerate}
 \item $\rho(A,I_3)=\rho(A)=1+a$, $\x=(1+2a^{-1},2a^{-1},1)\trans$ is optimal and $\y=(1,1,0)\trans$ is minimal optimal.
 \item $\hat\rho(A,I)=a$ and $\y=(1,0,0)\trans$ is the unique weak minimal vector in $\Pi_3$.
 \end{enumerate}
 \end{example}
 \begin{proof} \emph{(1)} It is straightford to show that  $r(A,I,\x)=(1+a)$, hence $\x$ is optimal.  Take $\y(t)=(1,1,t)\trans$ for $t>0$.  Then $r(A,B,\y(t))=1+a+t$. Let $t\to 0$ to deduce that $\y$ is optimal.  Suppose to the contrary that $\y$ is not minimal optimal.  Thus either $\uu=(1,0,0)\trans$ or $\vv=(0,1,0)\trans$  is minimal optimal. 
 But $r(A,I,\uu)=a$ and $r(A,I,\vv)=\infty$.  Contradiction.
 
 \noindent
 \emph{(2)}  Part \emph{4} of Theorem \ref{charrhoAI} yields that $\hat\rho(A,I)=a$.  It is straightforward to show that  $\y=(1,0,0)\trans$ is the unique weak minimal vector in $\Pi_3$.
 \end{proof}
 \section{Polynomial approximation of $\rho(A,B)$ and $\hat\rho(A,B)$}\label{sec:polaprox}
 We first recall the fundamental result that a feasibility of a system of linear inequalities and minimization of a linear function are polynomial.  For simplicity we state a variant of this fact in the following setting that we need.
\begin{lemma}\label{polsolvlinineq} Let $C\in\Q^{m\times n},\mathbf{b}=(b_1,\ldots,b_m)\trans\in\Q^m, \mathbf{c}=(c_1,\ldots,c_n)\trans \in \Q^n$.  Then one can find in polynomial time in $\langle C\rangle+\langle\mathbf{b}\rangle$ if the the following polytope is empty or not 
\begin{equation}\label{llinineqPin}
C\x\ge \mathbf{b}, \quad\x=(x_1,\ldots,x_n)\trans\ge \0, \sum_{i=1}^n x_i=1.
\end{equation}
Furthermore, if the above polytope is nonempty then the minimum of the linear function $\mathbf{c}\trans \x$ over this polytope can be found in polynomial time.
\end{lemma}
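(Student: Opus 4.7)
The plan is to reduce the statement to the classical polynomial-time solvability of linear programming over the rationals and then invoke the known algorithms. Concretely, the polytope described by \eqref{llinineqPin} is the intersection of the affine hyperplane $\sum_{i=1}^n x_i = 1$ with the polyhedron $\{\x\ge\0,\;C\x\ge\mathbf{b}\}$, so its emptiness question is a linear feasibility problem with input size polynomial in $\langle C\rangle+\langle\mathbf{b}\rangle$. Likewise, minimizing the linear functional $\mathbf{c}^\top \x$ over this polytope is a rational linear program with input size polynomial in $\langle C\rangle+\langle\mathbf{b}\rangle+\langle\mathbf{c}\rangle$.

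First, I would write the feasibility problem in the standard form $\{\x\in\R^n : M\x \le \mathbf{d},\ \x\ge\0\}$ by combining $C\x\ge\mathbf{b}$ with the two inequalities $\sum x_i \le 1$ and $-\sum x_i \le -1$, and note that this rewriting is polynomial in the input size. Second, I would appeal to the polynomial-time solvability of linear programming over the rationals, as established by Khachiyan's ellipsoid method \cite{GLS88,Lov86} or, alternatively, Karmarkar's interior-point algorithm; both yield an exact answer (feasible/infeasible together with a rational vertex solution in the feasible case) in time polynomial in the bit-length of the input data. This immediately settles the first assertion.

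For the optimization assertion, once feasibility is established, the same LP machinery minimizes the linear objective $\mathbf{c}^\top\x$ over the polytope in polynomial time, returning an optimal rational vertex whose bit-length is polynomially bounded in the input size; this is again covered by the references \cite{GLS88,Lov86}. Boundedness of the objective is automatic here because the polytope is contained in the simplex $\{\x\ge\0,\sum x_i=1\}$, which is compact, so an optimum is attained.

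There is no real obstacle to overcome: the lemma is simply a convenient specialization of the fundamental theorem on polynomial-time linear programming, and the proof I envisage is entirely a matter of casting \eqref{llinineqPin} into the standard LP format and quoting \cite{GLS88,Lov86}. Accordingly, the write-up will be short, emphasizing the reduction and citing the relevant theorems rather than reproducing any of the intricate machinery behind the ellipsoid or interior-point methods.
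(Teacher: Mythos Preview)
Your proposal is correct and, at the level of ideas, takes the same route as the paper: both treat the lemma as a direct specialization of polynomial-time linear programming and cite \cite{GLS88,Lov86}. The one difference is in how feasibility is handled. You invoke the full LP package (ellipsoid or interior-point) which decides feasibility and optimizes in one stroke; the paper instead pins itself to \cite[Corollary 2.3.7]{Lov86}, which presupposes a nonempty feasible set, and therefore builds feasibility checking out of optimization by adding the rows of $C\x\ge\mathbf{b}$ one at a time and maximizing the next row's linear form over the current relaxed polytope. Your argument is the cleaner and more standard one; the paper's sequential reduction is a workaround tailored to the specific corollary it quotes. Either is fine.
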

This result is well known, see for example \cite{Lov86, GLS88}.  To be precise, \cite[Corollary 2.3.7]{Lov86} assumes for simplicity that the system \eqref{llinineqPin} is solvable, and then one can find the maximum or minimum of $\mathbf{c}\trans\x$, where $\mathbf{c}\in\Q^n$ is given and $\x$ satisfies   \eqref{llinineqPin}.   To apply \cite[Corollary 2.3.7]{Lov86} for solvable system we consider  the following linear programing problems $LP_j$ for $j\in [m]$.  Let $\mathbf{c}_j\trans$ be the $j-th$ row of $C$.  Let $C_j\in\Q^{(j-1)\times n}, \mathbf{b}_j\in\Q^{j-1}$ be the matrix and the column obtained from $C$ and $\mathbf{b}$ by deleting the $m-j+1$ rows $j,\ldots,m$ respectively.  Assume that the system
\begin{equation}\label{LPjprob}
C_j\x\ge \mathbf{b}_j,  \quad\x=(x_1,\ldots,x_n)\trans\ge \0, \sum_{i=1}^n x_i=1
\end{equation}
is solvable for $j+1=\ell\in[n]$.   (This is trivially true for $\ell=1$.) Now consider the maximum problem
$\max\mathbf{c}_{\ell}\trans\x$ over the set given by \eqref{LPjprob} for $j=\ell$.  Assume that the maximum is achieved at $\x_{\ell}$.  Then the polytope given by \eqref{LPjprob} for $j=\ell+1$ is nonempty if and only if $\mathbf{c}\trans \x_{\ell}\ge b_{\ell}$.
Hence by running at most $m$ linear programming problems we can determine in polynomial time if the system \eqref{llinineqPin} is feasible or not.
\begin{theorem}\label{epsapproxCWn}  Let $A,B\in\Q_+^{m\times n}\setminus\{0\}$.  Assume that $\rho(A,B)<\infty$.  Then for any $\varepsilon\in (0,1)\cap \Q$  one of the following statements can be verified in polynomial time in $\langle A\rangle+\langle B\rangle+\langle \varepsilon\rangle$:
\begin{enumerate}
\item The Collatz-Wielandt quotient satisfies $\rho(A,B)<\varepsilon$.
\item The Collatz-Wielandt quotient is positive and one can find
 $\tilde\rho(A,B)\in \Q_{+}\setminus \{0\}$, such that 
\begin{equation}\label{epsapproxCWn1}
\tilde \rho(A,B)\le \rho(A,B)\le (1+\varepsilon) \tilde\rho(A,B).
\end{equation}
\end{enumerate}
\end{theorem}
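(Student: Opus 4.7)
The strategy is a multiplicative binary search on $t$ using a single LP oracle, following the comment in the paper and the approach indicated in \cite{ABHKLPP}.

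First I would preprocess. Invoking Lemma~\ref{cindrABfinite} and its accompanying algorithm, together with Lemma~\ref{rowB=0}, the assumption $\rho(A,B)<\infty$ lets us delete in polynomial time the rows $i$ on which both $A$ and $B$ vanish, and so we may assume $B$ has no zero row. Then $U:=r(A,B,\1_n)=\max_{i\in[m]}(A\1_n)_i/(B\1_n)_i$ is an explicit rational upper bound on $\rho(A,B)$ of polynomial bit complexity.

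The key LP oracle is, for a rational $t\ge 0$, the program
\[
\delta^{*}(t) \;=\; \max\bigl\{\delta\in\R \,:\, A\x\le tB\x,\ \x\ge\delta\1_n,\ \1_n\trans\x=1\bigr\}.
\]
By Lemma~\ref{polsolvlinineq}, $\delta^{*}(t)$ (or at least its sign) is computable in polynomial time in $\langle A\rangle+\langle B\rangle+\langle t\rangle$, and its sign locates $\rho(A,B)$ relative to $t$. Indeed, if $\delta^{*}(t)>0$ the optimal witness $\x>\0$ yields $\rho(A,B)\le r(A,B,\x)\le t$; conversely, if $\rho(A,B)<t$ there is $\x>\0$ with $A\x\le tB\x$, and normalizing $\1_n\trans\x=1$ shows $\delta^{*}(t)\ge\min_i x_i>0$. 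Hence $\rho(A,B)<t\Rightarrow\delta^{*}(t)>0\Rightarrow\rho(A,B)\le t$.

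Armed with this oracle I first test $t=\varepsilon/(1+\varepsilon)$: if $\delta^{*}(t)>0$ then $\rho(A,B)\le\varepsilon/(1+\varepsilon)<\varepsilon$ and I output case~(1). Otherwise $\rho(A,B)\ge\varepsilon/(1+\varepsilon)>0$, and I define the geometric grid $t_k=\varepsilon(1+\varepsilon)^{k-1}$ for $k=0,1,\ldots,K$ with $K=\lceil\log_{1+\varepsilon}((1+\varepsilon)U/\varepsilon)\rceil$. Binary searching on $k$ with $O(\log K)$ calls to the oracle, I locate the smallest index $k^{*}\ge 1$ with $\delta^{*}(t_{k^{*}})>0$; the implications above then give $\rho(A,B)\le t_{k^{*}}=(1+\varepsilon)t_{k^{*}-1}$ and $\rho(A,B)\ge t_{k^{*}-1}$, so $\tilde\rho(A,B):=t_{k^{*}-1}>0$ satisfies~\eqref{epsapproxCWn1}. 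The total running time is polynomial in $\langle A\rangle+\langle B\rangle+\langle\varepsilon\rangle$.

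The main subtlety I foresee is the borderline case $\rho(A,B)=t$: the biconditional ``$\rho(A,B)\le t\iff\delta^{*}(t)>0$'' can fail when the infimum defining $\rho(A,B)$ is not attained in the open positive orthant (already visible in small $2\times 2$ examples where $\rho(A,B)=0$ yet no positive $\x$ satisfies $A\x\le\0$). However, the two non-strict implications above are exactly what drive the multiplicative bisection, and such a borderline event can only shift $\tilde\rho(A,B)$ by one grid step, which is harmlessly absorbed into the factor $(1+\varepsilon)$. A second technical point is that computing the sign of $\delta^{*}(t)$ is LP optimization rather than mere feasibility; this remains polynomial by the standard extension of Lemma~\ref{polsolvlinineq} to optimization of a linear objective over the defined polytope.
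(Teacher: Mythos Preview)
Your oracle $\delta^*(t)$ is the right tool: it tests whether $\{A\x\le tB\x,\ \x\in\Pi_n\}$ contains a strictly positive point, and the chain $\rho(A,B)<t\Rightarrow\delta^*(t)>0\Rightarrow\rho(A,B)\le t$ is exactly what a search needs. This is genuinely different from the paper, which instead checks whether \emph{every} point of that polytope is strictly positive (all coordinate minima $\mu_{i}>0$); the paper's condition is only sufficient for $\rho(A,B)\le t$, so its converse inference ``otherwise $t\le\rho(A,B)$'' is not justified in general (take $m=1$, $A=(1\ 0)$, $B=(1\ 1)$, $t=1/4$: here $\rho(A,B)=0$, the polytope is $\{x_1\le 1/4\}\cap\Pi_2$, and $\mu_1=0$). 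Your test avoids this and uses a single LP rather than $n$ of them.

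The gap in your argument is bit complexity. The grid $t_k=\varepsilon(1+\varepsilon)^{k-1}$ has $K=\Theta\bigl(\varepsilon^{-1}\log(U/\varepsilon)\bigr)$ points, and although you make only $O(\log K)$ oracle calls, the rational $t_k$ being queried has $\langle t_k\rangle=\Theta(k\,\langle\varepsilon\rangle)$; for $k$ near $K$ this is exponential in $\langle\varepsilon\rangle$, so the LP is run on data of exponential bit length and the overall procedure is not polynomial in $\langle A\rangle+\langle B\rangle+\langle\varepsilon\rangle$. The repair is to keep your oracle but adopt the paper's two-stage schedule: after your case-(1) test has certified $\rho(A,B)\ge\varepsilon/(1+\varepsilon)$, first halve $t_k=U/2^k$ until $\delta^*(t_k)\le 0$, obtaining a ratio-$2$ bracket in $O(\log(U/\varepsilon))$ steps; then perform $O(\log(1/\varepsilon))$ ordinary bisection steps inside that bracket until the relative width drops below $\varepsilon$. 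All midpoints then have bit length polynomial in $\langle A\rangle+\langle B\rangle+\langle\varepsilon\rangle$, and setting $\tilde\rho(A,B)$ to the final left endpoint gives \eqref{epsapproxCWn1}.
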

\begin{proof}  Let $t_0=r(A,B,\1_n)<\infty$ and $N=\lceil\log_2\varepsilon^{-1}\rceil +1$.  Set $k=1$, $t_{k}=\frac{1}{2}t_{k-1}$ and $C=t_k B-A$.  Consider the system 
\eqref{llinineqPin} with $\mathbf{b}=\0$.  Assume first that this system is solvable.
Let $\mu_{i,k}$ be the maximum of $x_i$ for the system \eqref{llinineqPin} with  $\mathbf{b}=\0$ for $i\in[n]$.   Assume first that $\mu_{i,k}>0$ for each $i\in[n]$.  We claim that $\rho(A,B)\le t_k$.  Indeed, assume that $\mu_{i,k}=x_{i,i,k}$ where $\x_{i,k}=(x_{1,i,k},\ldots,x_{n,i,k})\trans\in \Q^{n}$ satisfies the system 
\eqref{llinineqPin} with $\mathbf{b}=\0$.   Set $\x=\frac{1}{n}\sum_{i=1}^n \x_{i,k}$ to deduce that $r(A,B,\x)\le t_k$.  Assume that for $k=2,\ldots,N$ we have the inequality
$\rho(A,B)\le t_k$.  Then $\rho(A,B)\le t_N<\varepsilon$ and we showed the condition \emph{(1)}. 

Suppose now that for the smallest value $k\in[N]$ one of the following conditions hold: Either the system  \eqref{llinineqPin} with $C=t_k B-A$ and $\mathbf{b}=\0$  is not solvable or $\mu_{i,k}=0$ for some $i\in[n]$.  Then $0<t_k\le \rho(A,B)$.  Set $l=0$ $f_l=t_k,g_l=t_{k-1}$.  Then $f_l\le \rho(A,B)\le g_l$.  If $\frac{g_l-f_l}{f_l}\le \varepsilon$ then $\tilde \rho(A,B)=f_l$.  If not set $M=\lceil\log_{4/3}\frac{g_l-f_0}{f_0\varepsilon}\rceil$ and apply now the bisection algorithm:
Let $h_l=\frac{f_l+g_l}{2}$ and $C=h_lB-A$.  Consider the system \eqref{llinineqPin} with $\mathbf{b}=\0$.  Assume first that this system is solvable.  Let $\mu_{i,l}$ be the minimum of $x_i$ for the system \eqref{llinineqPin} with $\mathbf{b}=\0$ for $i\in[n]$.
Suppose that $\mu_{i,l}>0$ for each $i$.  Then $f_{l+1}=f_{l}, g_{l+1}=\frac{f_l+g_l}{2}$.  If $\mu_{i,l}=0$ for some $i$ or the system \eqref{llinineqPin} with $\mathbf{b}=\0$ is not solvable set $f_{l+1}=\frac{f_l+g_l}{2}, g_{l+1}=g_{l}$.  Clearly $f_{l+1}\le \rho(A,B)\le g_{l+1}$.  Continue this bisection to $l=M$.  Then $\tilde\rho(A,B)=f_M$ and the condition \emph{(2)} holds.
\end{proof}

We now discuss briefly an approximation to $\hat\rho(A,B)$.  
First use use Algorithm \ref{alg}  to determine if $\hat\rho(A,B)<\infty$.  
Assume that $\hat\rho(A,B)<\infty$.  
Next assume that $\hat\rho(A,B)>0$, that is, the condition \emph{7} of Lemma \ref{rowB=0} holds.  Then the right hand side of \eqref{lowestrhoAB1} as a positive lower bound for $\hat\rho(A,B)$.
 Use Algorithm \ref{alg}  to find a nonempty subset $S\subseteq [n]$ such that $r(A,B,\1_S)<\infty$. Thus $r(A,B,\1_S)$ is an upper bound on $\hat\rho(A,B)$.
Next apply a simplified version of the bisection algorithm used in the proof of Theorem \ref{epsapproxCWn}, (without considering the minimum problem), to deduce:
\begin{proposition}\label{epsapproxwCWn}  Let $A,B\in\Q_+^{m\times n}\setminus\{0\}$.  Assume that $0<\hat\rho(A,B)<\infty$.  Then for any $\varepsilon\in (0,1)\cap \Q$ one can find $\bar\rho(A,B)\in \Q_{+}\setminus\{0\}$ in polynomial time in $\langle A\rangle+\langle B\rangle+\langle \varepsilon\rangle $, such that 
\begin{equation}\label{epsapproxwCWn1}
\bar\rho(A,B)\le \hat\rho(A,B)\le (1+\varepsilon)\bar\rho(A,B).
\end{equation}
\end{proposition}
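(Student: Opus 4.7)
The plan is to run a bisection in the same spirit as Theorem \ref{epsapproxCWn}, but simplified. The crucial simplification is that $\hat\rho(A,B)$ ranges over all of $\R^n_+\setminus\{\0\}$ and is actually attained by part \emph{1} of Lemma \ref{rhoABachiev}, so only a pure linear-feasibility test is needed, never the coordinate-by-coordinate minimization step that was required for $\rho(A,B)$.

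First I would produce rational bounds $0 < L \le \hat\rho(A,B) \le U$. Positivity of $\hat\rho(A,B)$ forces $A\trans\1_m > \0$ by part \emph{7} of Lemma \ref{rowB=0}, so Lemma \ref{lowestrhoAB} supplies the strictly positive rational lower bound $L := \min_{j\in[n]} \frac{\sum_i a_{ij}}{\sum_i b_{ij}}$. Finiteness of $\hat\rho(A,B)$ together with the algorithm following Lemma \ref{cindrABfinite} returns a nonempty $S\subseteq [n]$ with $r(A,B,\1_S) < \infty$, producing the rational upper bound $U := r(A,B,\1_S) \ge \hat\rho(A,B)$.

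Second, the pivotal observation: for any rational $t > 0$, the inequality $\hat\rho(A,B) \le t$ holds if and only if the rational linear system
\[
(tB - A)\x \ge \0,\qquad \x \ge \0,\qquad \1_n\trans \x = 1
\]
is feasible. The forward direction uses that $\hat\rho(A,B)$ is attained at some $\y\gneq \0$ by part \emph{1} of Lemma \ref{rhoABachiev} (normalize so that $\1_n\trans\y=1$); the reverse direction is immediate from \eqref{charrhoAB}. By Lemma \ref{polsolvlinineq}, this feasibility test runs in polynomial time in $\langle A\rangle + \langle B\rangle + \langle t\rangle$.

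Third, apply plain bisection to $[L, U]$: initialize $f_0 = L$, $g_0 = U$; at step $l$ set $h_l = (f_l+g_l)/2$, test feasibility of the above system at $t = h_l$, and update $(f_{l+1},g_{l+1}) = (f_l, h_l)$ if feasible and $(h_l, g_l)$ otherwise, maintaining the invariant $f_l \le \hat\rho(A,B) \le g_l$. Stop at the first $l$ with $g_l \le (1+\varepsilon) f_l$ and return $\bar\rho(A,B) := f_l$. Since $g_l - f_l = (U-L)/2^l$ and $f_l \ge L$, at most $\lceil \log_2 \frac{U-L}{L\varepsilon}\rceil$ iterations are required, which is polynomial in $\langle A\rangle + \langle B\rangle + \langle \varepsilon\rangle$. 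The only point that needs checking, rather than a genuine obstacle, is that the bit-size of each midpoint $h_l$ and hence of each LP instance remains polynomial throughout; this holds because each bisection step adds a single bit to the rational representation of the midpoint.
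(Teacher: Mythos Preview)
Your proposal is correct and follows essentially the same route as the paper: obtain a rational lower bound from Lemma~\ref{lowestrhoAB}, a rational upper bound $r(A,B,\1_S)$ via the algorithm after Lemma~\ref{cindrABfinite}, and then bisect using only the LP feasibility test $(tB-A)\x\ge\0$, $\x\in\Pi_n$ (the paper's ``simplified version \ldots\ without considering the minimum problem''). Your explicit use of attainment from part~\emph{1} of Lemma~\ref{rhoABachiev} to justify the equivalence $\hat\rho(A,B)\le t\iff$ feasibility is exactly the point that makes the minimization step unnecessary here, and your iteration count and bit-size remarks are the details the paper leaves implicit.
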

\section{Minimal optimal solutions}\label{sec:minopt}
We first discuss weak optimal solutions which are easier to characterize.
 \begin{theorem}\label{CWpairthm} Let $m,n$ be positive integers.  Assume that $A=[a_{ij}],B=[b_{ij}]\in\R^{m\times n}_+$.  Suppose that $\hat\rho(A,B)<\infty$.
 \begin{enumerate} 
 \item Assume that $\y\in\R^n_+\setminus\{\0\}$ is weak optimal.  Then at least one coordinate of $(A-\hat\rho(A,B)B)\y$ is zero.
 \item Assume that there exists a weak optimal vector $\y\in\R^n_+$ with $\ell$ positive coordinates.  Let $A',B'\in\R^{m\times \ell}$ be the submatrices of $A$ and $B$ respectively induced by the positive coordinates of $\y$.  If $\ell\ge m$ then $\rank (A'-\hat\rho(A,B)B')<m$.
 \item
 A minimal weak optimal $\y$ has at most $m$ positive coordinates. 
 \item Let  $\y$ be a minimal weak optimal with $m$ positive coordinates.  Then $\y$ is a weak GPF-vector.  Furthermore $\rank (A'-\hat\rho(A,B)B')=m-1$.
\item Let $\y'$ be a minimal weak optimal with $\ell<m$ positive coordinates. Then there exists a minimal weak optimal $\y$, satisfying $\supp \y=\supp\y'$  with the following property:
Let $\cK=\{k\in[m], (A\y)_k=\hat\rho(A,B)(B\y)_k\}$.  Then $|\cK|\ge \ell$. 
 \end{enumerate}
 \end{theorem}
 \begin{proof}
 {\it (1)}  Let $D(t)=A-tB$ and $t_0=\hat\rho(A,B)$.  As $r(A,B,\y)=t_0$ it follows that $C(t_0)\y\le \0$.  Suppose to the contrary that $C(t_0)\y<\0$.  Then $t_0>0$.   Furthermore, there exists $0\le t_1<t_0$ such that $C(t_1)\y\le\0$.  Hence $r(A,B,\y)\le t_1<t_0$ contrary to our assumption.
 
 \noindent
 {\it (2)}  Assume that $\y$ is  weak optimal vector which has $\ell$ positive coordinates.  Let $A',B'\in\R^{m\times \ell}$ be defined as in the theorem.   Assume that $\0<\z\in\R^{\ell}$ is  the subvector of $\y$ induced by its  positive coordinates.  Let $C(t)=A'-tB'$ and $t_0=\hat\rho(A,B)$.  Then $C(t_0)\z=-\w, \w\in\R^m_+$.  Assume $\ell\ge m$ and $\rank C(t_0)=m$.  Then there exists a $m\times m$ submatrix of $C(t_0)$ which is nonsingular.  By permuting the columns of $C(t_0)$ we can assume the following.  Let $A'=[A_1\;A_2], B'=[B_1\;B_2]$ where $A_1,B_1\in\R^{m\times m}$ and $\det (A_1-t_0B_1)\ne 0$.  Denote $C_1(t)=A_1-tB_1, C_2(t)=A_2-tC_2$.  Assume that $\z\trans=(\uu\trans, \vv\trans), \0<\uu\in\R^m, \0<\vv\in \R^{\ell-m}$.  Thus 
 $C_1(t_0)\uu=-(C_2(t_0)\vv+\w)$.  Since $\det C_1(t_0)\ne 0$, there exists $\varepsilon>0$ such that $\det C_1(t)\ne 0$ for $|t-t_0|<\varepsilon$.
 Observe that $\uu=-C_1(t_0)^{-1}(C_2(t_0)\vv+\w)>\0$.  Let $\uu(t)= -C_1(t)^{-1}(C_2(t)\vv+\w)$ for $t\in(t_0-\varepsilon,t_0+\varepsilon)$.  Then $\uu(t)$ is continuous in the interval $(t_0-\varepsilon,t_0+\varepsilon)$.  Hence there exists
$\varepsilon_1\in (0,\varepsilon)$ such that $\uu(t)>\0$ for $|t-t_0|\le \varepsilon_1$. 
Set for $t_1=t_0-\varepsilon_1$ and $\z'=( \uu(t_1)\trans,\vv\trans)\trans$.
Thus $C(t_1)\z'=-\w$, which implies that $r(A',B',\z')\le t_1<\hat\rho(A',B')=\hat\rho(A,B)$.  This contradicts the definition of $\hat\rho(A',B')$.  Hence $\rank C(t_0)<m$. 
 
\noindent 
 {\it (3)}  Assume to the contrary that $\y$ is a minimal weak optimal solution with $\ell>m$ positive coordinates.  Let $A',B',C(t),\z,\w,t_0$ be defined as in part \emph{2} of the proof.
We showed that $\rank C(t_0)<m$.  Hence $\dim\ker C(t_0)\ge 2$.  Choose $\x\in\ker C(t_0)\setminus\{\0\}$ such that $\x$ has at least one negative coordinate and one positive coordinate.  So $\y$ is not proportional to $\x$.  Let $\z(s)=\z+s\x$ for $s\ge 0$.  Note that $C(t_0)\z(s)=-\w$.  Let $s_0>0$ be the biggest $s$ such that $\z(s)\ge \0$.  Then $\z(s_0)\gneq\0$, $\z(s_0)$ has at least one zero component and $C(t_0)\z(s_0)=-\w$.  Thus $r(A',B',\z(s_0))\le t_0=\hat\rho(A',B')$.   Hence $r(A',B',\z(s_0))=t_0$, which contradicts the minimality of $\y$.

\noindent
{\it (4)}  Assume that $\y$ a minimal weak optimal with $m$ positive coordinates.  Use the notations of parts \emph{2} and \emph{3} of the proof.  We claim that $C(t_0)\z=0$.  Suppose not.  By part \emph{2} $\rank C(t_0)<m$.  Let $\x\in\ker C(t_0)\setminus\{\0\}$.  So $\y$ is not proportional to $ \x$.  By considering $\pm \x$ we may assume that $\x$ has at least one negative coordinate.  Define $\z(s_0)$ as in part \emph{(3)} to deduce that $\y$ is not minimal.  So $C(t_0)\z=0$.  Assume to the contrary that $\rank C(t_0)<m-1$.  Choose $\x\in\ker C(t_0)$ to have positive and negative coordinates.  Then we conclude as above that $\z$ is not minimal.

\noindent
{\it (5)}  Let $\y'$ be a minimal weak optimal with $\ell<m$ positive coordinates.   Part \emph{1} of the theorem yields that $C(t_0)\y'$ has at least on zero coordinate.
Thus if $\ell=1$ part \emph{5} of the theorem is trivial.  

Assume that $\ell>1$.
Consider all minimal weak optimal $\tilde \y$ such that $\supp \tilde\y=\supp \y'$. 
Let $\cK(\tilde \y)=\{k\in[m], (A\tilde\y)_k=\hat\rho(A,B)(B\tilde\y)_k\}$.  Choose a minimal weak optimal $\y$  such that $|\cK(\y)|=p$ is maximal.  We claim that $p\ge \ell$.
Suppose not.  Let $\cK=\cK(\y)$.
Assume the notations of part \emph{2}.  Let $\tilde A=A'[\cK,[n]], \tilde B=B'[ \cK,[n]], \tilde C(t)=C(t)[\cK,[n]]$.
Hence $\tilde C(t_0))\z=\0$.  Suppose first that $\rank \tilde C(t_0)\le \ell-2$. Hence there exists $\uu\in \R^{\ell}$ satisfying $\tilde C(t_0)\uu=0$ such that $\uu$ has positive and negative coordinates. Thus $\z$ and $\uu$ are linearly independent.
Let $\vv\in \R^{m}$  be the extension of $\uu$ by adding zero coordinates.    
In particular, $\vv$ has a zero coordinate where $\y$ has zero coordinate.
Let $s\ge 0$ and consider $\z(s)=\z+s\uu$ and $\y(s)=\y+s\vv$.  Let $s_1>0$ be the smallest value such that $\z(s_1)\ge \0$ and $z(s_1)$ has at least one zero coordinate.  Since $\y$ was minimal we deduce that $\y(s_1)$ is not weak optimal.  That is, there exists $s_2\in (0,s_1)$ with the following property: There exists $j\in [m]\setminus\cK$ such that $(A\y(s_2))_j=t_0(B\y(s_2))_j$ and $(A\y(s_2))_k\le t_0(B\y(s_2))_k$ for $k\in[m]\setminus\{\cK\cup\{j\}\}$. Clearly $(A\y(s_2))_k= t_0(B\y(s_2))_k$ for $k\in \cK\cup\{j\}$.  So $\y(s_2)$ is optimal and $|\cK(\y(s_2))|>|\cK|=|\cK(\y)|$.  This contradicts the choice of $\y$.

It is left to consider the case where $|\cK|=\ell-1$ and $\rank \tilde C(t_0) = \ell-1$.
We proceed similarly as in the proof of \emph{(2)}.  Permute the columns of $C(t)$ such that $\tilde C(t)=[\tilde C_1(t)\, \tilde C_2(t)]\in \R^{(\ell -1)\times \ell}$ and $\tilde C_1(t_0)\in \R^{(\ell -1)\times (\ell-1)}$ is a nonsingular matrix.  Therefore $\tilde C_1(t)$ is nonsingular for $|t-t_0|<\varepsilon$ for some $\varepsilon>0$.
Assume that $\z\trans=(\uu\trans,\vv\trans)$.  Thus $\uu=-(\tilde C(t_0))^{-1}\tilde C_1(t_0)\vv$.  As $\uu>0$ it follows that $\uu(t)=-(\tilde C(t))^{-1}\tilde C_1(t)\vv >0$ for $|t-t_0|<\varepsilon'$ for some $\varepsilon'\in (0,\varepsilon)$.  Let $\w(t)\trans=(\uu(t)\trans,\vv\trans)$.  As $(A'\w(t_0))_j<(t_0-\varepsilon_1)(B'\w(t_0))_j$ for $j\in[m]\setminus\cK$ and some $\varepsilon_1>0$, there exists $t'\in(t_0-\varepsilon',t_0)$ such that $\tilde C(t')\w(t)=\0$ and $(A'\w(t'))_j<t'(B'\w(t'))_j$ for $j\in[m]\setminus\cK$.  That is, $\hat\rho(A',B')\le t'$.  This contradicts our assumption that $\hat\rho(A',B')=t_0$.
Therefore $|\cK|\ge \ell$.
 \end{proof}

Parts \emph{(4)-(5)} and their proof  yield:
 \begin{corollary}\label{subGPFeig} 
  Let $m,n$ be positive integers.  Assume that $A,B\in\R^{m\times n}_+$.  Suppose that $\hat\rho(A,B)<\infty$.  Then there exists a minimal weak optimal $\y\in\R^n_+$ with at most $m$ positive coordinates, and a minimal $\cI\subset [m]$, possibly an empty set, with the following properties:
\begin{equation}\label{subGPFeig1}
A(\cI,\emptyset)\y=\hat\rho(A,B)B(\cI,\emptyset)\y, \quad  m-|\cI|\ge |\mathrm{supp }\;\y|.
\end{equation}
Furthermore  if  $m-|\cI|=|\mathrm{supp}\, \y|$ then the only nonzero solution of $(A(\cI),\emptyset)-\hat\rho(A,B)B(\cI,\emptyset))\x=\0$ whose nonzero coordinates lie in $\mathrm{supp}\; \y$ are multiples of $\y$. 
\end{corollary}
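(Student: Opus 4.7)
The plan is to assemble Parts~\emph{3}, \emph{4}, and \emph{5} of Theorem~\ref{CWpairthm}. First, Part~\emph{3} yields a minimal weakly optimal $\y\in\R^n_+$ with $\ell := |\supp\y| \le m$, and I will split on whether $\ell = m$ or $\ell < m$.

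If $\ell = m$, set $\cI=\emptyset$. Part~\emph{4} then delivers both the equation $A\y = \hat\rho(A,B)B\y$ (so $A(\cI,\emptyset)\y = \hat\rho(A,B)B(\cI,\emptyset)\y$ trivially, with $m-|\cI|=m=\ell$) and the rank identity $\rank(A'-\hat\rho(A,B)B')=m-1$. Since $\y$ restricted to its support already lies in that one-dimensional kernel, the uniqueness-up-to-scaling in the ``furthermore'' clause is immediate. If $\ell<m$, I apply Part~\emph{5} to replace $\y$ by a minimal weakly optimal vector with the same support for which $|\cK(\y)|\ge \ell$, where $\cK(\y):=\{k\in[m]:(A\y)_k=\hat\rho(A,B)(B\y)_k\}$; among such vectors I further choose one maximizing $|\cK(\y)|$, and put $\cI:=[m]\setminus\cK(\y)$. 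By construction $A(\cI,\emptyset)\y=\hat\rho(A,B)B(\cI,\emptyset)\y$ and $m-|\cI|=|\cK(\y)|\ge \ell$. This $\cI$ is minimal with respect to inclusion, since for every $k\notin\cI$ the equation already holds, so no row of $\cI$ can be dropped without violating it.

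The delicate point is the ``furthermore'' clause when $\ell<m$ and $m-|\cI|=\ell$, i.e., $|\cK(\y)|=\ell$. Let $M$ denote the $\ell\times\ell$ submatrix of $A(\cI,\emptyset)-\hat\rho(A,B)B(\cI,\emptyset)$ obtained by restricting columns to $\supp\y$; the restriction $\z:=\y|_{\supp\y}>0$ lies in $\ker M$, and I must show $\rank M = \ell-1$. I will argue by contradiction assuming $\rank M\le \ell-2$: then $\dim\ker M\ge 2$, so I can pick $\uu\in\ker M$ independent of $\z$ and, by subtracting an appropriate multiple of the strictly positive $\z$, arrange $\uu$ to have both positive and negative entries. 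Extending by zeros to $\R^n$ and setting $\y(s)=\y+s\uu$, I will run exactly the perturbation used in the proof of Part~\emph{5}: for small $s>0$ the equations on $\cK(\y)$ remain satisfied and the strict inequalities on rows of $\cI$ persist. Increasing $s$, either a coordinate of $\y(s)$ first hits zero, producing a weakly optimal vector with strictly smaller support and contradicting the minimality of $\y$, or some strict inequality on a row $j\in\cI$ first becomes equality at some $s_2>0$, producing a minimal weakly optimal vector $\y(s_2)$ with $\supp\y(s_2)=\supp\y$ and $|\cK(\y(s_2))|>|\cK(\y)|$, contradicting the maximality of $|\cK(\y)|$. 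Thus $\rank M=\ell-1$, $\ker M=\R\z$, and any $\x\gneq\0$ with $\supp\x\subseteq\supp\y$ and $(A(\cI,\emptyset)-\hat\rho(A,B)B(\cI,\emptyset))\x=\0$ is a scalar multiple of $\y$. The main obstacle is simply to verify that this perturbation bookkeeping, already present in Part~\emph{5}, still goes through in the borderline situation $|\cK(\y)|=\ell$; everything else in the corollary is a direct reading of Parts~\emph{3}-\emph{5}.
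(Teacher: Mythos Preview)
Your proposal is correct and follows essentially the same route the paper intends: the paper simply states that the corollary is yielded by ``Parts \emph{4}--\emph{5} and their proof,'' and you have spelled out precisely how, including the key observation that the perturbation argument from the proof of Part~\emph{5} (choose $\uu\in\ker M$ with entries of both signs, slide $\y(s)=\y+s\uu$, and derive a contradiction either to minimality of support or to maximality of $|\cK(\y)|$) must be rerun in the borderline case $|\cK(\y)|=\ell$ to obtain $\rank M=\ell-1$. One small wording fix: your sentence justifying minimality of $\cI$ should say that for every $k\in\cI$ one has $(A\y)_k<\hat\rho(A,B)(B\y)_k$ strictly, so row $k$ cannot be removed from $\cI$ without destroying the equality; your phrasing ``for every $k\notin\cI$ the equation already holds, so no row of $\cI$ can be dropped'' has the logic inverted, though the conclusion is right.
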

We now show that some similar results apply to optimal vectors.
 \begin{theorem}\label{CWpairthm1} Let $m,n$ be positive integers.  Assume that $A=[a_{ij}],B=[b_{ij}]\in\R^{m\times n}_+$.  Suppose that $\rho(A,B)<\infty$.
 \begin{enumerate} 
 \item Assume that $\y\in\R^n_+\setminus\{\0\}$ is optimal.  Then at least one coordinate of $(A-\rho(A,B)B)\y$ is zero.
 \item Assume that there exists an optimal vector $\y\in\R^n_+$ with $\ell$ positive coordinates.  Let $A',B'\in\R^{m\times \ell}$ be the submatrices of $A$ and $B$ respectively induced by the positive coordinates of $\y$.  If $\ell\ge m$ then $\rank (A'-\rho(A,B)B')<m$.
 \item There exists an optimal $\y$ has at most $m$ positive coordinates. 
 \end{enumerate}
 \end{theorem}
 \begin{proof}  
 \emph{(1)}  Part \emph{2} of Lemma \ref{rhoABachiev} yields the existence of a  sequence $\y_l\in\Pi_n^o$ such that $\lim_{l\to\infty}\y_l=\y$, and 
 $\lim_{l\to\infty}r(A,B,\y_l)=r(A,B,\y)=\rho(A,B)$.
 Clearly, at least one coordinates of $(A-r(A,B,\y_l)B)\y_l$ is zero.  Hence there exists an infinite subsequence $\{l_p\},p\in\N$ such that a fixed coordinate of 
 $(A-r(A,B,\y_l)B)\y_{l_p}$ is zero.  Letting $p\to\infty$ we deduce the claim.
 
 \noindent
 \emph{(2)}  Let $t_0=\rho(A,B)$.  
 Assume first that $B\y>\0$.   
 Then part \emph{(3)} of Lemma \ref{scproprho} yields that $r(A,B,\x)\ge \rho(A,B)$ for each $\x\in\R^n_+, \sup\x=\sup\y$.
 Then we proceed as in the proof of part \emph{2} of Theorem \ref{CWpairthm}, using the notations and the results in this proof.  Rename the columns of $A$ and $B$ so that the first $\ell$ coordinates of $\y$ are positive.  Let $A'=A[[m],[\ell]], B'=B[[m],[\ell]], C(t)=A'-tB'$, and $\0<\z\in\R^\ell$ the projection of $\y$ on its first $\ell$ coordinates.
 As $B\y=B'\z>\0$ it follows that we can choose $t_1<t_0$ such that $B'\z'>\0$.  Hence $r(A',B',\z')\le t_1<t_0$ which contradicts the claim that $r(A',B',\z')\ge t_0$.  Hence $\rank C(t_0)<m$.
 
Assume that $\cI\ne\emptyset$ is the set of zero coordinates of $B\y$.  That is $\cI$ is the set of the zero rows of $B'$.    As $\rho(A,B)<\infty$ we deduce that the set of zero rows of $A'$ contains $\cI$.  Clearly, $\rank C(t_0)<m$.
 
 \noindent
 \emph{(3)}  We prove the claim by induction on $n$.  For $n=1$ the vector $\y=(1)$ is optimal, hence $m\ge \ell=1$.
 Assume that the claim holds for $n\le N$ and suppose that $n=N+1$.  Let $\y$ be a minimal optimal with vector with the minimal number of positive coordinates $\ell$. 
 Assume the conventions of parts \emph{(1)-(2)} of the proof.  Suppose to the contrary that $\ell>m$.   Then we proceed 
 to the proof of part \emph{(3)} of Theorem \ref{CWpairthm}.  Let $t_0$ and $\0<\z\in \R^{\ell}$ be as in part \emph{2}.  Let  $\cJ=\{\ell+1,\ldots,n\}$.  (If $\ell=n$ then $\cJ=\emptyset$.)
 Assume that $\x\in\ker C(t_0)\setminus\{\0\}$ such that $\x$ has at least one negative coordinate and one positive coordinate.  
 Set $\z(s)=\z+s\x$ and choose $s_0>0$ the biggest $s$ such that $\z(s_0)\ge\0$.  Then $\z'=\z(s_0)\gneq \0$ and $\z'$ has at least one zero coordinate.  Clearly, $r(A',B',\z')\le r(A',B',\z)=t_0$. 
 
 Suppose first that $r(A',B',\z')<t_0$.  Part \emph{(2)} of Lemma \ref{scproprho} yields that $B'\z'$ is not a positive vector. Hence $B'$ has at least one zero row.  Let $\cI$ be the set of zero rows of $B$.  As $r(A',B',\z')<t_0$ it follows that the set of zero rows of $A$ contains $\cI$.
 Hence $\hat\rho(A',B')\le r(A',B',\z')< t_0$.  
 
 Assume first that $|\cI|=m$.  Then $A'=B'=0$.  If $\cJ=\emptyset$ then $A=A',B=B'$, and a vector $(1,0,\ldots,0)\trans$ is a minimal optimal vector, contrary to our assumption.
 Hence $|\cJ|\ge 1$.  Clearly, $\rho(A,B)=\rho(A[[m],[\cJ]],B[[m],[\cJ]])$ and an optimal vector of $(A[[m],[\cJ]],B[[m],[\cJ]])$ can be extended trivially, by adding zero coordinates, to an optimal vector of $(A,B)$.  The induction hypothesis on the pair $(A[[m],[\cJ]],B[[m],[\cJ]])$ yields  the existence of an optimal $\w\in\R^{n-\ell}_+$ satisfying $|\sup \w|\le m$.  Extend trivially $\w$ to $\R_+^n$ to obtain an optimal vector $\y$ with $\sup\y\le m$, contrary to our assumption.
 
 Assume that $|\cI|=m-m'\in[m-1]$.  Rename the rows of $A$ and $B$ such that $\cI=\{m'+1,\ldots,m\}$.  Let $C=A[[m'],[\ell]],D=B[[m'],[\ell]]$.  Clearly $\hat\rho(C,D)=\hat\rho(A',B')<t_0$.
 Let us consider a weak minimal  vector $\w\in\R_+^\ell$  corresponding to $(C,D)$, with the minimal number of positive coordinates. 
 Theorem \ref{CWpairthm} yields that $p=|\sup \w|\le m'$.  Rename the first $\ell$ columns of $A$ and $B$ such that the first $p$ coordinates of $\w$ are positive.
 Let $\0<\uu\in\R^p$ be the projection of $\w$ on its first $p$-coordinates.
 Let $C'=C[[m'],[p]], D'=D[[m'],[p]]$
 Observe that 
 $\hat\rho(C,D)=r(C,D,\w)=r(C',D',\uu)=\hat\rho(C',D')<t_0$.  
 Let $\cI'$ be the set of zero rows of $D'$.  We claim that $m'\ge p+|\cI'|$.   
 Recall the set of zero rows in $C'$ contains $\cI'$.  Let $A_{11}=C'(\cI',\emptyset),
 B_{11}=D'(\cI',\emptyset)$. Clearly, $\hat\rho(C',D')=\hat\rho(A_{11},B_{11})$.  Furthermore,  $\x\in\R^p_+$ is a weak optimal vector of $(A_{11},B_{11})$ if and only if it is a weak optimal vector of $(C',D')$.  Note that each weak optimal vector of $(C',D')$ can be trivially extended to a weak optimal vector of $(C,D)$.  Since a weak optimal vector of $(C,D)$ has at least $p$ positive coordinates, it follows that each optimal vector of $(A_{11},B_{11})$ is positive.   Hence $\rho(A_{11},B_{11})=\hat\rho(A_{11},B_{11})<t_0$.
 
 Apply Theorem \ref{CWpairthm} to minimal optimal vectors of $(A_{11},B_{11})$ to deduce the inequality $m'-|\cI'|\ge p$.  Let $m''=m'-|\cI'|$ rearrange the first $m'$ rows of $A$ and $B$ such that $\cI=\{m''+1,\ldots,m'\}$. Let $\cK=\{p+1,\ldots,n\}$.  Observe
\begin{eqnarray*}
&&A=[A_1\;A_2],\;A_1=A[[m],[p]]=\left[\begin{array}{c}A_{11}\\0\end{array}\right],\; A_2=A[[m],[\cK]]=\left[\begin{array}{c}A_{12}\\A_{22}\end{array}\right],\\
&&B=[B_1\;B_2],\;B_1=B[[m],[p]]=\left[\begin{array}{c}B_{11}\\0\end{array}\right],\; B_2=B[[m],[\cK]]=\left[\begin{array}{c}B_{12}\\B_{22}\end{array}\right], \\
 &&A_{11}, B_{11}\in\R_+^{m''\times p},\quad A_{22}, B_{22}\in \R^{(m-m'')\times (n-p)}.
\end{eqnarray*}
As $\cI'$ was the set of zero rows of $D'$ we deduce that $B_{11}$ has no zero rows.  Hence $B_{11}\1_p>\0$.  Thus the conditions of part \emph{(3)} of Lemma \ref{scproprho}  hold.  Thus $\rho(A_{22},B_{22})=t_0$.  We now can apply the induction hypothesis on the optimal vector of the pair $(A_{22},B_{22})$.  That is, there exists an optimal $\vv\in\R_+^{n-p}$ satisfying $|\sup \uu|\le m-m''$.
We now use the arguments of the proof of part \emph{(2)} of Lemma \ref{rhoABachiev}
to deduce that $\y=(\uu\trans, s_1\vv\trans)\trans$ is an optimal for the pair $(A,B)$ for some $s_1>0$.  Note that $|\sup\y|= |\sup \uu|+|\sup \vv|\le p+m-m''\le m''+(m-m'')=m$, which contradicts our assumption.
  
It is left to discuss the case where $r(A',B',\z(s_0))=\rho(A,B)$.  Let $\hat\x\in\R^n$ be the trivial extensions of $\x$ to $\R^n$.  Then $A\hat\x=t_0\hat\x$.
Let $\y(s)=\y+s\hat\x\in\R^n$. Then $(A-t_0B)\y(s)=(A-t_0B)\y\le \0$.
Hence $r(A,B,\y(s))\le  t_0$ for $\y(s)\ge \0$.  This holds for $s\in[0,s_0]$.
Note that our assumption is that $r(A,B,\y(s_0))=t_0$.
We claim that $\y(s_0)$ is optimal.   Recall that there exists a sequence $\0<\y_k \in \R^n$ such that $\lim_{k\to\infty}\y_k=\y, \lim_{k\to\infty} r(A,B,\y_k)=r(A,B,\y)=t_0>0$.   Let $\y_l(s)=\y_l+s\hat\x$.  Then 
 \begin{eqnarray}
 (A-t_0B)\y_k(s)= (A-t_0B)\y_k\le (r(A,B,\y_k)-t_0)\y_k,\quad k\in\N.
\label{ylsineq}
 \end{eqnarray}
 Let $\y_k=\tilde\y_k+\vv_k$, where $\tilde\y_k$ is obtained by replacing the positive coordinates of $\y_k$ with zero coordinates in the places $\y$ has zero coordinates.
 Then $\vv_k=\y_k-\tilde\y_k\ge \0$.  The coordinates of $\vv_k$ are positive where the coordinates of $\tilde\y_k$ are zero, and the coordinates of $\vv_k$ are zero where the coordinates of $\tilde\y_k$ are positive.  Thus $\y_k(s)=\y(s)+(\tilde \y_k-\y)+\vv_k$. Fix $s\in (0,s_0)$. 
 We claim that there exists $K(s)>1$ and $N(s)>0$ such that for $k>N(s)$ we have the inequality $\y_k\le K(s)\y_k(s)$.  Indeed as $\y(s)$ has positive coordinates where $\y$ has positive coordinates it follows that there exists $K(s)>1$ such that $\y\le \frac{K(s)}{2}\y(s)$.  Clearly $\vv_k\le K(s)\vv_k$ for all $k\in\N$.  As $\lim_{k\to\infty} (\tilde\y_k-\y)=\0$ it follows that there exists $N(s)$ such that $(\tilde\y_k-\y)\le\frac{K(s)}{2}\y(s)$ for $k>N(s)$.  Hence $\y_k\le K(s)\y_k(s)$ and $\y_k(s)>\0$ for $k>N(s)$.  Therefore $r(A,B,\y_k(s))\ge t_0$ for $k>N(s)$.  Use  \eqref{ylsineq} to deduce that $r(A,B,\y_k(s))\le r(A,B,\y_k)$ for $k>N(s)$.

 Choose a sequence an increasing sequence $0<s_1<s_2<\cdots$ which converges to $s_0$.  Choose an increasing subsequence $l_j, j\in\N$ such that $\l_j>N(s_j)$,
 $|\y_{l_j}(s_j)-\y(s_j)|<\frac{1}{j}$. and $r(A,B,\y_j(s_j))\in [t_0,t_0+\frac{1}{j}]$.
 So $\lim_{j\to\infty}\y_{lj}(s_j)=\y(s_0)$ and $\y(s_0)$ is optimal.  This contradicts our assumption that $\y$ is optimal vector with the minimum number of positive coordinates.
 \end{proof}

We now give a simple example of two positive invertible stochastic matrices $A,B\in\R^{2\times 2}_+$ for which there is a unique optimal $\y\in\Pi_2$ with one positive coordinate.
 \begin{proposition} \label{2x2exam}  Let 
 \[A=\left[\begin{array}{cc}a&1-a\\b&1-b\end{array}\right], B=\left[\begin{array}{cc}1-a&a\\1-b&b\end{array}\right], \quad 0<b<a<\frac{1}{2}.\]
 Then $\rho(A,B)=\hat\rho(A,B)=\frac{a}{1-a}<1$ and $\z=(1,0)\trans$ is the unique optimal vector in $\Pi_2$, which is not a GPF-eigenvector.
 \end{proposition}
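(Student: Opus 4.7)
The plan is to reduce the problem to a one-parameter optimization. Since $B$ is a positive matrix, part \emph{3} of Lemma \ref{Bpos} immediately gives $\rho(A,B)=\hat\rho(A,B)$ and shows that every weakly optimal vector is optimal, so it suffices to analyze the infimum over $\x>\0$ and then identify the optimal boundary vector. Parametrizing by $t=x_2/x_1\ge 0$, the ratios $(A\x)_i/(B\x)_i$ become the two scalar functions
\[
f_a(t)=\frac{a+(1-a)t}{(1-a)+at},\qquad f_b(t)=\frac{b+(1-b)t}{(1-b)+bt},
\]
so that $r(A,B,\x)=\max(f_a(t),f_b(t))$ for $t>0$, and $r(A,B,(1,0)\trans)=\max(a/(1-a),b/(1-b))=a/(1-a)$ since $s\mapsto s/(1-s)$ is increasing on $(0,1/2)$.

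Next I would carry out two short computations. First,
\[
f_c'(t)=\frac{1-2c}{((1-c)+ct)^2},
\]
so $f_a,f_b$ are both strictly increasing on $[0,\infty)$ because $a,b<1/2$. Second, a direct multiplication gives
\[
f_a(t)-f_b(t)=\frac{(a-b)(1-t^2)}{\bigl((1-a)+at\bigr)\bigl((1-b)+bt\bigr)},
\]
so $f_a(t)>f_b(t)$ for $t\in[0,1)$ and $f_a(t)<f_b(t)$ for $t>1$, with equality only at $t=1$ (where both sides equal $1$). Combining monotonicity with this comparison, $\max(f_a(t),f_b(t))$ equals $f_a(t)$ on $[0,1]$ and $f_b(t)$ on $[1,\infty)$; being the maximum of two increasing functions meeting at $t=1$, it is itself increasing on $[1,\infty)$ and increasing on $[0,1]$. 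Hence its infimum over $t>0$ equals $\lim_{t\searrow 0}f_a(t)=a/(1-a)$, which is strictly less than $1$ since $a<1/2$.

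Having established $\rho(A,B)=a/(1-a)$, I next verify that $\z=(1,0)\trans$ is optimal: the approximating sequence $\y_k=(1,1/k)\trans\in\Pi_2^{o}$ satisfies $r(A,B,\y_k)=f_a(1/k)\to a/(1-a)=r(A,B,\z)$, meeting the definition of Lemma \ref{rhoABachiev}. For uniqueness in $\Pi_2$, the only other boundary vector is $(0,1)\trans$, but $r(A,B,(0,1)\trans)=(1-b)/b$, which exceeds $a/(1-a)$ iff $a+b<1$, true by hypothesis; and any interior $\x\in\Pi_2^{o}$ gives $r(A,B,\x)\ge f_a(t)>f_a(0)=a/(1-a)$ by strict monotonicity, so no such $\x$ satisfies $r(A,B,\x)=\rho(A,B)$.

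Finally, I would rule out the GPF-eigenvector property by a one-line computation: $A\z=(a,b)\trans$ while $\rho(A,B)B\z=\tfrac{a}{1-a}(1-a,1-b)\trans=(a,a(1-b)/(1-a))\trans$, and the second coordinates coincide only if $b(1-a)=a(1-b)$, i.e.\ $a=b$, contradicting $b<a$. None of the steps presents a real obstacle; the mild subtlety is merely to keep track that the infimum in the definition of $\rho(A,B)$ is over $\x>\0$, so the optimum at the boundary vector $(1,0)\trans$ must be certified by the approximating sequence rather than by direct substitution of a positive vector.
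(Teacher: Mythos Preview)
Your proof is correct and follows essentially the same idea as the paper's: both arguments observe that each row ratio $(A\x)_i/(B\x)_i$ is strictly increasing in the parameter $t=x_2/x_1$ (you show this via the derivative, the paper states it directly), hence both ratios --- and therefore their maximum --- are minimized at the boundary point $\z=(1,0)\trans$, giving $\rho(A,B)=\max(a/(1-a),b/(1-b))=a/(1-a)$. Your computation of $f_a-f_b$ is a harmless redundancy: once both $f_a$ and $f_b$ are known to be strictly increasing, $\max(f_a,f_b)$ is automatically strictly increasing regardless of which dominates where, so the sign analysis of the difference is not needed. On the other hand, you are more careful than the paper in explicitly invoking Lemma~\ref{Bpos}\emph{(3)} to pass from $\hat\rho$ to $\rho$, in exhibiting the approximating sequence required by the definition of ``optimal,'' and in writing out the one-line check that $\z$ is not a GPF-eigenvector; the paper leaves these to the reader.
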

 \begin{proof} As $a,b\in(0,\frac{1}{2})$ it follows that 
 \[\min_{\x\in\Pi_2}\frac{(A\x)_1}{(B\x)_1}=\frac{(A\z)_1}{(B\z)_1}=\frac{a}{1-a}, \quad \min_{\x\in\Pi_2}\frac{(A\x)_2}{(B\x)_2}=\frac{(A\z)_2}{(B\z)_2}=\frac{b}{1-b},\]
 where $\z=(1,0)\trans\in\Pi_2$ is the unique vector that minimizes both ratios.  As $\frac{b}{1-b}<\frac{a}{1-a}$ we deduce that $\rho(A,B)=\frac{a}{1-a}$ and $\z$ is a unique optimal
 in $\Pi_2$. Clearly, $\rho(A,B)<1$.
 \end{proof}
 
 Note that $\rank (A-\rho(A,B)B)=2$, which does not contradict part \emph{2} of Theorem \ref{CWpairthm} as $\ell=1<m=2$.  Observe that $A\1_2=B\1_2$, that is $1$ is the eigenvalue of the generalized eigenvalue problem \eqref{geneigprobAB} with a corresponding positive eigenvector $\1_2$.  Note that the second eigenvalue of
 \eqref{geneigprobAB} is $\lambda=-1$ with a corresponding eigenvector $(1,-1)\trans$. 

 Recall that for $A\in\R^{n\times n}_+$ we have that $\rho(A\trans)=\rho(A)$.  For a pair of $A,B\in\R^{m\times n}_+$  such equality does not always hold.  For a pair of matrices given in Proposition \ref{2x2exam} we have $\rho(A\trans,B\trans)=\frac{1-a}{a}>\rho(A,B)$.  Indeed, observe that
 $\frac{(A\trans\x)_1}{(B\x)_1}<\frac{(A\trans\x)_2}{(B\x)_2}$ for each $\x\in\Pi_2$.  The minimum of the bigger ratio is achieved for $\z=(1,0)\trans$, which yields the equality
  $\rho(A\trans,B\trans)=\frac{1-a}{a}$.
 \section{A special $B$ appearing in a wireless network}\label{sec:specB}
 \begin{definition}\label{wcpair}  A pair $A,B\in\R_+^{m\times n}$ is called a WN-pair, (a wireless network pair), if
 $n\ge m$, $B$ has no zero row and each column of $B$ has exactly one positive entry. 
 \end{definition}
 A WN-pair was considered in \cite{ABHKLPP}.  It has the following interpretation in a wireless network \cite[Introduction]{ABHKLPP}.  Each row $i$  in $A=[a_{ij}]$ corresponds to the \emph{entity} $i$ (receiver), and each nonzero element in the row $i$ of $B=[b_{ij}]$ corresponds to an \emph{affector} (transmitter) of the entity $i$.  In the classical case, $m=n$ and $B$ is a diagonal matrix with positive diagonal.  That is,  each entity $i$ has one affector located at the entry $(i,i)$ of $B$.  In more general case the entity $i$ may have several affectors corresponding to the positive entries in the row $i$ of $B$.  The assumption that each column $B$ has one positive entry means that two different entities do not share a common affector.   In view of the wireless network interpretation of the entries of $A$ and $B$, it is assumed in \cite{ABHKLPP} that $a_{ij}b_{ij}=0$ for each pair $(i,j)$. In our treatment we drop this assumption. 
 
 Note that if $m=n$ then $B$ is called a monomial matrix.  So $B=PD$, where $P$ is an $m\times m$ permutation matrix and $D$ is an $m\times m$ diagonal matrix with positive diagonal entires.  Hence $B^{-1}=D^{-1}P\trans$. 
 
 The following theorem gives an explicit formula for $\hat\rho(A,B)$ of a WN-pair.
 \begin{theorem}\label{WGPFA-P}  Assume that $A,B\in\R^{m\times n}_+$ is a WN-pair.    Let $\cE(A,B)\subset \Pi_n$ be a finite set of vectors $\w$ that satisfy the following five conditions: 
 \begin{enumerate}
 \item The vector $\w\in\Pi_n$ has $\ell\le m$ nonzero coordinates.  
 \item Let $\cI$ be the set of zero rows of $B[[m],\supp \w]$.  Then $|\cI|=m-\ell$.
 (Hence $B[[m]\setminus\cI,\supp\w]$ is monomial.) 
 \item   $A[\cI,\supp \w]=0$.
 \item The matrix $C(\w)=B[[m]\setminus\cI,\supp \w]^{-1}A[[m]\setminus\cI,\supp \w]$ is irreducible.
 \item Let $\0<\z\in\R^{\ell}$ be the projection of $\w$ on $\supp \w$.  Then the vector  $\z$ is the PF-eigenvector of $C(\w)$.  
 \end{enumerate}
  The above conditions imply that $\rho(C(\w))=r(A,B,\w)$ and $A\w=\rho(C(\w))B\w$.
 Furthermore
 \begin{equation}\label{charhatrhoGPFA-P}
 \hat\rho(A,B)=\min\{r(A,B,\w),\;\w\in\cE(A,B)\}
 \end{equation}
 In particular, $\y$ is a minimal weak optimal if and only if $\y\in\cE(A,B)$ and $\y$ minimizes the right hand side of \eqref{charhatrhoGPFA-P}.  Furthermore, each minimal weak optimal is a weak GPF-eigenvector.
 \end{theorem}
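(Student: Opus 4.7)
The approach is to prove two things: (i) each $\w\in\cE(A,B)$ satisfies $A\w=\rho(C(\w))B\w$ and $r(A,B,\w)=\rho(C(\w))$; (ii) each minimal weakly optimal vector lies in $\cE(A,B)$. From (i) and (ii), together with the fact that $\cE(A,B)$ is finite (its elements are parameterized by a support $T\subseteq[n]$ with $|T|\le m$ and the unique normalized PF-eigenvector of the irreducible matrix $C(\w)$), one deduces equation \eqref{charhatrhoGPFA-P}. Part (i) is a direct verification: for $\w\in\cE(A,B)$ with $\z=\w|_{\supp\w}>\0$, set $\hat A=A[[m]\setminus\cI,\supp\w]$ and $\hat B=B[[m]\setminus\cI,\supp\w]$, which is monomial by conditions 1--2. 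Condition 5 reads $\hat A\z=\rho(C(\w))\hat B\z$, so on rows $[m]\setminus\cI$ the identity $A\w=\rho(C(\w))B\w$ holds pointwise, while on rows $i\in\cI$ both $(A\w)_i=0$ (condition 3) and $(B\w)_i=0$ (definition of $\cI$), yielding $r(A,B,\w)=\rho(C(\w))$.

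For (ii), let $\y$ be minimal weakly optimal with $T=\supp\y$, $\ell=|T|$ and $t_0=\hat\rho(A,B)$. Theorem \ref{CWpairthm}(3) gives $\ell\le m$ (condition 1). Let $\cI$ be the zero rows of $B[[m],T]$; since $r(A,B,\y)<\infty$ and $\y|_T>\0$, we get $A[\cI,T]=0$, which is condition 3. Applying Lemma \ref{rowB=0}(3) to drop the $\cI$-rows and restricting to columns $T$ produces a pair $(\hat A,\hat B)\in\R_+^{q\times\ell}$ with $q=m-|\cI|$, for which $\tilde\z=\y|_T>\0$ is still minimal weakly optimal with value $t_0$ (any strict subset support for $(\hat A,\hat B)$ would extend by zeros to a smaller-support weakly optimal for $(A,B)$, contradicting minimality of $\y$). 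The WN-pair structure places the unique positive entry of each of the $\ell$ columns of $\hat B$ in one of the $q$ rows of $[m]\setminus\cI$, so $q\le\ell$; Theorem \ref{CWpairthm}(3) applied to $(\hat A,\hat B)$ gives $\ell\le q$. Hence $q=\ell$, $\hat B$ is $\ell\times\ell$ monomial and $|\cI|=m-\ell$ (condition 2).

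With $\hat B$ monomial, $C(\y)=\hat B^{-1}\hat A\in\R_+^{\ell\times\ell}$ is well defined, and the identity $r(\hat A,\hat B,\x)=r(C(\y),I,\x)$ on $\R_+^\ell\setminus\{\0\}$ yields $\hat\rho(C(\y),I)=\hat\rho(\hat A,\hat B)=t_0$. If $C(\y)$ were reducible, Theorem \ref{charrhoAI}(4) would provide a source block $V_{i^*}$ with $\rho(C(\y)[V_{i^*}])=t_0$; extending its PF-eigenvector by zeros produces an eigenvector $\w$ of $C(\y)$ with $\supp\w=V_{i^*}\subsetneq[\ell]$, and pulling back to $\R_+^n$ through the monomial identification gives a weakly optimal vector for $(A,B)$ with support strictly inside $T$, contradicting minimality of $\y$. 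Hence $C(\y)$ is irreducible (condition 4) and $\rho(C(\y))=t_0$. Then $C(\y)\tilde\z\le t_0\tilde\z=\rho(C(\y))\tilde\z$ with $\tilde\z>\0$ forces equality in the Collatz-Wielandt characterization, so Lemma \ref{Wiel} identifies $\tilde\z$ (normalized) as the PF-eigenvector of $C(\y)$ (condition 5). Thus $\y\in\cE(A,B)$ and $r(A,B,\y)=t_0$.

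Combining (i) and (ii) gives $\hat\rho(A,B)=\min\{r(A,B,\w):\w\in\cE(A,B)\}$, and the forward direction makes each minimizer a weak GPF-eigenvector. For the ``iff'' characterization, the key ingredient is the classical PF fact that a proper principal submatrix of an irreducible nonnegative matrix has strictly smaller spectral radius: if $\y\in\cE(A,B)$ minimized $r$ on $\cE(A,B)$ but were not minimal, applying (ii) to a minimal weakly optimal $\y^*$ with $\supp\y^*\subsetneq\supp\y$ would give $\y^*\in\cE(A,B)$, and the common monomial relabeling exhibits $C(\y^*)$ as a proper principal submatrix of $C(\y)$, so $\rho(C(\y^*))<\rho(C(\y))=t_0$, contradicting $r(A,B,\y^*)=t_0$. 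The main obstacle is the double reduction in (ii): proving $q=\ell$ so that $\hat B$ is monomial, and then ruling out reducibility of $C(\y)$; both hinge on the support-shrinking principle made possible by the WN-pair assumption.
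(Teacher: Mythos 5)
Your proposal is correct, and it reaches the same conclusion through the same essential ingredients (the counting argument that the reduced $\hat B$ has exactly $\ell$ positive entries, Theorem \ref{CWpairthm}, Theorem \ref{charrhoAI}(4), and Lemma \ref{Wiel}), but it is organized differently from the paper in a way worth noting. The paper proves that every minimal weakly optimal vector lies in $\cE(A,B)$ by induction on $m$, splitting into the case $m'<m$ (handled by the induction hypothesis) and the case $m'=m=\ell$ (handled via part \emph{4} of Theorem \ref{CWpairthm} and part \emph{4} of Theorem \ref{charrhoAI}). You avoid the induction entirely: after dropping the zero rows $\cI$ and restricting to $\supp\y$, you observe that the restriction $\tilde\z$ is still \emph{minimal} weakly optimal for the reduced pair, so part \emph{3} of Theorem \ref{CWpairthm} applied to the reduced pair gives $\ell\le q$ directly, which combined with the monomial count $q\le\ell$ forces $q=\ell$; reducibility of $C(\y)$ is then excluded uniformly via Theorem \ref{charrhoAI}(4). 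This is a genuine streamlining. You also supply an explicit argument for the converse half of the ``minimal weakly optimal iff minimizer in $\cE(A,B)$'' claim, via strict monotonicity of the spectral radius on proper principal submatrices of an irreducible matrix applied to $C(\y^*)=C(\y)[T^*,T^*]$; the paper leaves this direction essentially implicit. Both proofs rest on the same support-shrinking principle, so nothing is gained in generality, but your version is shorter and makes the case analysis unnecessary.
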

 \begin{proof}  We first justify that  the assumption $|\cI|=m-l$ in part \emph{(2)} implies that $B_{11}=B[[m]\setminus\cI,\supp \w]$ is a monomial matrix.   Since each column of $B$ has exactly one nonzero entries it follows that $B_1=B[[m],\supp \w]$ has $\ell$ nonzero entries.  Since $|\cI|=m-\ell$ it follows that $B_{11}$ has $\ell$ nonzero rows.  That is, each row and column of $B_{11}$ has exactly one nonzero element. We next show that the conditions \emph{(1)-(5)} imply that $A\w=\rho(C)B\w$.  Let $A_1=A[[m],\supp\w]$ and $A_{11}=A[[m]\setminus\cI,\supp w]$.
 Since $\z$ is a PF-eigenvector of $C(\w)$ it follows that $A_{11}\z=\rho(C(\w))B_{11}\z$.  Let $A_{21}=A[\cI,\supp\w], B_{21}=B[\cI,\supp\w]$.  As $A_{21}=B_{21}=0$ we deduce that $A_1\w=\rho(C(\w))B_1\w$, which is equivalent to $A\w=\rho(C)B\w$.  Hence $r(A,B,\w)=\rho(C(\w))$.   
 
In particular, $\hat\rho(A,B)\le r(A,B,\w)$.  Denote by $\rho_1(A,B)$ the minimum in  \eqref{charhatrhoGPFA-P}.  Then $\hat\rho(A,B)\le \rho_1(A,B)$.  To show the equality \eqref{charhatrhoGPFA-P} it is enough to show that  a minimal weak optimal $\y$ is in $\cE(A,B)$.  We show this claim by induction on $m$.

For $m=1$ the equality \eqref{charhatrhoGPFA-P} trivially holds. Assume that each minimal weak optimal $\y$ is in $\cE(A,B)$ for $m\le M$.  Suppose that $m=M+1$.
Assume that $\y\in \Pi_n$ is a minimal weak optimal vector with the $\ell=|\supp \y|$.  Part \emph{3} of Theorem  \ref{CWpairthm} yields that $\ell\le m$.  
Set $\0<\z$ be the projection of $\y$ on its support. Let $\cJ=[n]\setminus \supp\y$,  $A'=A(\emptyset,\cJ)$ and $B'=B(\emptyset,\cJ)$.
 Then $\hat\rho(A,B)=\hat\rho(A',B')=r(A',B',\z)$. 
Denote by $\cI$ the set of the zero rows of $B'$.
As $\hat\rho(A,B)<\infty$ it follows that $\cI$ is a set of zero rows of $A'$.
Let $\tilde A=A(\cI,\cJ), \tilde B=B(\cI,\cJ)\in \R_+^{m'\times l}$.  Thus $\hat\rho(A,B)=\hat\rho(\tilde A,\tilde B)=r(\tilde A,\tilde B,\z)$.
As $B$ does not have a zero column it follows that $B'$ does not have a zero column.  As $\cI$ is the set of zero rows  of $B'$ it follows that $\tilde B$ does not have zero columns or zero rows.  As each column of $\tilde B$ has one positive element it follows that $\tilde B$ has exactly $\ell$ nonzero entries.  Hence $m'\le \ell$.

The equality $\hat\rho(A,B)=\hat\rho(\tilde A,\tilde B)=r(\tilde A,\tilde B,\z)$ yields that $\z$ is a weak optimal solution for $\hat\rho(\tilde A,\tilde B)=\hat\rho(A,B)$.  The assumption that $\y$ was minimal weak optimal yields that $\z$ a minimal weak optimal for $(\tilde A,\tilde B)$. 

Assume first that $m'<m$.  We apply the induction hypothesis to $\tilde A,\tilde B$ and a minimal $\z$ to deduce that $m'=\ell$.  So $\tilde B$ is a monomial matrix, $\hat\rho(\tilde A,\tilde B)=\rho(C)$ and $\tilde A\z=\rho(C)\tilde B\z$.   The induction hypothesis yields that $C$ is irreducible.  Hence $\y\in\cE(A,B)$ as we claimed.

It is left to discuss the case where $m'=m=\ell$.  In this case we use part \emph{4} of Theorem \ref{CWpairthm}.  So $\y$ is a weak GPF-eigenvector.  In particular $\tilde A\z=\hat\rho(A,B)\tilde B\z$.  Since $m'=m$ it follows that $\tilde B$ is a monomial matrix.  Observe next that $\rho(C)=\hat\rho(\tilde A,\tilde B)=\hat\rho(C,I)$.  Furthermore, $\z>\0$ is a minimal weak optimal vector of $(C,I)$.
Apply now part \emph{(4)} of Theorem \ref{charrhoAI}.  A minimal weak optimal vector of $(C,I)$ is supported on $V_i\subseteq [m]$ which corresponds to a source in the reduced graph $\vec{G}_{red}$.   Furthermore $A[V_i]$ is irreducible.  Since $\z>\0$ it follows that $V_i=[m]$ and $C$ is irreducible.  Hence $\y\in\cE(A,B)$.
 \end{proof}
 The following notion of $S$-irreducibility was introduced in \cite{ABHKLPP}:
 \begin{definition} A WN-pair $A,B\in \R_+^{m\times n}$ is called $S$-irreducible if the following condition holds:  For each subset $\cK\subseteq [n]$ of cardinality $m$, such that $B[[m],\cK]$ is a monomial matrix,  the matrix $B[[m],\cK]^{-1}A[[m],\cK]$ is irreducible.
 \end{definition}

 Note that if $n=m$ then  $S$-irreducibility is equivalent to the irreducibility of $B^{-1}A$.
 The following proposition gives a sufficient condition for an $S$-irreducible pair:
 \begin{proposition}\label{scGPFA-P}  Let $A=[a_{ij}],B=[b_{ij}]\in\R_+^{m\times n}$ be a WN-pair.   Assume that $a_{ij}>0$ if $b_{ij}=0$.
 Then the pair $A,B$ is irreducible.
 \end{proposition}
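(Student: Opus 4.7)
The plan is to verify the defining property of $S$-irreducibility directly: for every $\cK\subseteq[n]$ with $|\cK|=m$ such that $B[[m],\cK]$ is monomial, show that $B[[m],\cK]^{-1}A[[m],\cK]$ is irreducible. My approach will be to argue that, under the hypothesis $a_{ij}>0$ whenever $b_{ij}=0$, the matrix $C:=B[[m],\cK]^{-1}A[[m],\cK]$ has all off-diagonal entries strictly positive, which trivially gives a strongly connected digraph and hence irreducibility.

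First I will set up coordinates. Writing $B':=B[[m],\cK]$ and $A':=A[[m],\cK]$ as $m\times m$ matrices (with columns indexed by an ordering of $\cK$), the fact that $B'$ is monomial means $B'=PD$ for a permutation matrix $P$ with $P_{i,\sigma(i)}=1$ and a positive diagonal matrix $D=\mathrm{diag}(d_1,\ldots,d_m)$, so $B'_{ij}=d_j$ if $j=\sigma(i)$ and $B'_{ij}=0$ otherwise. Taking the inverse $B'^{-1}=D^{-1}P^\top$ yields $(B'^{-1})_{ik}=1/d_i$ when $k=\sigma^{-1}(i)$ and $0$ otherwise, so a straightforward computation gives the formula
\[
C_{ij}=\frac{1}{d_i}\,A'_{\sigma^{-1}(i),\,j}.
\]
The key observation is that the support pattern of $C$ is a row permutation (by $\sigma^{-1}$) of the support pattern of $A'$, so I only need to control the zero pattern of $A'$ in those entries.

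Now I will exploit the hypothesis. For $i\neq j$, the corresponding entry $B'_{\sigma^{-1}(i),\,j}$ is nonzero only when $j=\sigma(\sigma^{-1}(i))=i$, so for $j\neq i$ we have $B'_{\sigma^{-1}(i),\,j}=0$. Translating back to the original matrix $B$, this says $b_{\sigma^{-1}(i),\,k}=0$ for the corresponding column index $k\in\cK$, and the hypothesis $a_{ij}>0$ whenever $b_{ij}=0$ then forces $A'_{\sigma^{-1}(i),\,j}>0$, hence $C_{ij}>0$. Thus every off-diagonal entry of $C$ is positive. Consequently the digraph of $C$ contains every edge $(i,j)$ with $i\neq j$ and is therefore strongly connected (trivially so when $m=1$), which is exactly the irreducibility of $C$. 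Since $\cK$ was an arbitrary subset of size $m$ for which $B[[m],\cK]$ is monomial, the WN-pair $(A,B)$ is $S$-irreducible.

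There is no real obstacle here: the only thing to be careful about is the bookkeeping between the original $(A,B)$ and the submatrices $(A',B')$, together with the identification $B'^{-1}=D^{-1}P^\top$, so the proof reduces to matching up the zero pattern of $B'$ with the guaranteed positivity of the corresponding entries of $A'$.
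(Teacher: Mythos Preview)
Your proof is correct and takes essentially the same approach as the paper: both pick an arbitrary $\cK$ with $B[[m],\cK]$ monomial and argue that all off-diagonal entries of $C=B[[m],\cK]^{-1}A[[m],\cK]$ are positive, hence $C$ is irreducible. The paper's version is terser, asserting directly that $A_1$ is positive wherever $B_1$ is zero and therefore $C$ has positive off-diagonal entries, whereas you spell out the permutation--diagonal decomposition $B'=PD$ and the resulting formula $C_{ij}=d_i^{-1}A'_{\sigma^{-1}(i),j}$; this is just additional bookkeeping, not a different idea.
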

 \begin{proof}  Let $\cK\subseteq [n]$ of cardinality $m$, such that $B_1=B[[m],\cK]$ is a monomial matrix.  Let $A_1=A[[m],\cK]$.  So $A_1$ has positive elements where the elements of $B_1$ are zero.  Therefore all off-diagonal entries of $C=B_1^{-1}A_1$ are positive, and $C$ is irreducible.  
 \end{proof}
 The following theorem gives an explicit formula for $\rho(A,B)$ of WN-pair.
 \begin{theorem}\label{rhoABWC}  Let $A,B\in\R_+^{m\times n}$ be a WN-pair.
 Denote by $\cM(B)$ the subset of all $\cK\subseteq[n]$ of cardinality $m$ such that the matrix $B[[m],\cK]$ is monomial.  Then
 \begin{equation}\label{rhoABWCfor}
 \rho(A,B)=\min\{\rho(B[[m],\cK]^{-1}A([m],\cK]),\;\cK\in\cM(B)\}
 \end{equation}
 For each $\cK\in\cM(B)$ such that $\rho(A,B)=\rho(B[[m],\cK]^{-1}A([m],\cK])$ there exists an optimal $\y\in\Pi_n$ with the following property: The support of $\y$ is contained in $\cK$ and $\y$ is a GPF-vector.
 Assume that WN-pair is $S$-irreducible. Then each such $\y$ is minimal optimal.
 \end{theorem}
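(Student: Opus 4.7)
The plan is to prove the equality $\rho(A,B)=\min_{\cK\in\cM(B)}\rho(C_\cK)$ by matching inequalities, then produce the claimed GPF-vector for each minimizing $\cK$, and finally invoke $S$-irreducibility for the minimality clause.

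First I would establish the upper bound $\rho(A,B)\le\rho(C_\cK)$ for every $\cK\in\cM(B)$. Writing $\tilde A=A[[m],\cK]$, $\tilde B=B[[m],\cK]$ and letting $\sigma$ be the map sending each column to the row of its unique positive $B$-entry, the monomial structure of $\tilde B$ yields $\tilde A=\tilde B C_\cK$ and the identity $(\tilde A\x)_i/(\tilde B\x)_i=(C_\cK\x)_{\sigma^{-1}(i)}/x_{\sigma^{-1}(i)}$ for $\x>\0$ in $\R^m$ indexed by $\cK$. Hence $\rho(\tilde A,\tilde B)=\rho(C_\cK,I_m)=\rho(C_\cK)$ by Theorem \ref{charrhoAI} part \emph{1}. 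Given any $\x>\0$ with $r(\tilde A,\tilde B,\x)\le t$, extend it to $\y>\0$ in $\R^n$ by $y_k=x_k$ for $k\in\cK$ and $y_k=\delta>0$ for $k\notin\cK$; because $(B\y)_i\ge(\tilde B\x)_i$ is uniformly bounded below, the ratios inflate by at most $O(\delta)$, and sending $\delta\to 0$ and optimizing over $\x$ delivers $\rho(A,B)\le\rho(C_\cK)$.

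For the lower bound $\rho(A,B)\ge\min_\cK\rho(C_\cK)$ I would induct on $m$, with the trivial base $m=1$. At the inductive step, use Lemma \ref{rhoABachiev} part \emph{2} to obtain a minimal optimal $\y$, with $T=\supp\y$ and $|T|=\ell\le m$ by Theorem \ref{CWpairthm1} part \emph{3}. After arguing that $\sigma|_T$ may be assumed injective (see below), extend $T$ to $\cK_0\in\cM(B)$ by adjoining, for each $i\in[m]\setminus\sigma(T)$, some $k_i\in\sigma^{-1}(i)\setminus T$; existence follows from $B$ having no zero row and disjointness is automatic from the single-positive-entry structure. Reindexing so $\cK_0=[m]$ with $\tilde B_0$ diagonal, one gets $r(C_{\cK_0},I_m,\y|_{\cK_0})=\rho(A,B)$, so the proof of Theorem \ref{charrhoAI} part \emph{4} supplies a source $V_p\subseteq T$ of $\vec{G}(C_{\cK_0})_{red}$ with $\rho(A,B)\ge\rho(C_{\cK_0}[V_p])$. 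The source property forces $A[[m]\setminus V_p,V_p]=0$, so Lemma \ref{rowB=0} parts \emph{3}--\emph{4} (after deleting rows $V_p$ and then the columns of $B$ whose unique positive entry lies in $V_p$, which are automatically zero columns of the restricted $B$) shows $\rho(A,B)\ge\rho(A'',B'')$ for a sub-WN-pair on $m-|V_p|<m$ rows. By the induction hypothesis, $\rho(A'',B'')=\rho(C''_{\cK''^*})$ for some $\cK''^*\in\cM(B'')$. Then $\cK^*:=V_p\cup\cK''^*\in\cM(B)$ and $C_{\cK^*}$ is block upper triangular with diagonal blocks $C_{\cK_0}[V_p]$ and $C''_{\cK''^*}$, so $\rho(C_{\cK^*})=\max(\rho(C_{\cK_0}[V_p]),\rho(C''_{\cK''^*}))\le\rho(A,B)$, giving the desired inequality.

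For each $\cK$ achieving the minimum, the claimed optimal GPF-vector comes from taking a nonnegative Perron eigenvector $\z$ of $C_\cK$ (Theorem \ref{charrhoAI} part \emph{1}) and extending by zero to $\y\in\Pi_n$; then $A\y=\tilde B C_\cK\z=\rho(C_\cK)\tilde B\z=\rho(A,B)B\y$. Combining a nonuniform perturbation via the strictly-positive Perron eigenvectors of $C_\cK+\tfrac1l J$ with the lifting from the upper-bound step produces positive $\y_l\to\y$ with $r(A,B,\y_l)\to\rho(A,B)$, so $\y$ is optimal. Under $S$-irreducibility every $C_\cK$ is irreducible, hence its Perron eigenvector is strictly positive, $\supp\y=\cK$ with $|\supp\y|=m$, and since Theorem \ref{CWpairthm1} part \emph{3} caps the support of a minimal optimal at $m$, the vector $\y$ is automatically minimal optimal. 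The principal obstacle is justifying $\sigma|_T$-injectivity for a minimal optimal $\y$: if $j_1,j_2\in T$ share $i=\sigma(j_1)=\sigma(j_2)$, the family $\y(s,t)=\y+se_{j_1}+te_{j_2}$ perturbs only $(B\y)_i$ among the $B$-coordinates, and a careful LP analysis of the cone of $(s,t)$ preserving $A\y(s,t)\le\rho(A,B)B\y(s,t)$ at all tight rows should expose a support-reducing direction unless very restrictive balance conditions on the relevant entries $a_{i',j_1},a_{i',j_2},b_{i,j_1},b_{i,j_2}$ all hold, contradicting minimality. A secondary technicality is that uniform perturbation $\y+\delta\1_n$ fails to drive the ratios at rows with $(B\y)_i=0$ toward $\rho(A,B)$, which is precisely why the Perron-type perturbation of $C_\cK$ is needed for the existence step.
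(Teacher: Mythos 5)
Your overall strategy (two matching inequalities, with an induction on $m$ for the lower bound) is genuinely different from the paper's, which instead first settles the $S$-irreducible case by showing every $\w\in\cE(A,B)$ from Theorem \ref{WGPFA-P} has full support $m$ (so $B\w>\0$, Lemma \ref{Bpos} upgrades weakly optimal to optimal, and \eqref{rhoABWCfor} falls out of \eqref{charhatrhoGPFA-P}), and then handles the general case by perturbing $A$ to $A+\frac{1}{l}J$ and invoking the continuity statement in part \emph{5} of Lemma \ref{Bpos}. Your upper bound $\rho(A,B)\le\rho(C_\cK)$ via the $\delta$-lift is fine. But the lower bound has a genuine gap exactly where you flag it: the claim that $\sigma$ restricted to the support $T$ of a minimal optimal $\y$ is injective is not proved, only conjectured via a ``careful LP analysis'' that ``should'' produce a support-reducing direction. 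This is not a minor technicality --- without injectivity $T$ cannot be completed to a set $\cK_0\in\cM(B)$ and the whole induction collapses. Worse, even if your perturbation $\y(s,t)$ yields a vector with smaller support and the same value of $r(A,B,\cdot)$, that vector need not be \emph{optimal} in the paper's sense (approximable from $\Pi_n^o$ with converging $r$-values); establishing this is precisely the delicate content of the proof of part \emph{3} of Theorem \ref{CWpairthm1}, and your sketch does not engage with it. The paper avoids this entire issue: it never needs a structural statement about minimal optimal vectors of the unperturbed pair, because the $S$-irreducible perturbed pairs have $B\w>\0$ for all relevant $\w$, where weak and strong optimality coincide.

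Two further problems. First, in the inductive step the pair $(A'',B'')$ obtained by deleting the rows $V_p$ need not be a WN-pair: the source property of $V_p$ kills $A[[m]\setminus V_p,\,j]$ only for columns $j\in\cK_0\cap\sigma^{-1}(V_p)$, whereas columns $j\in\sigma^{-1}(V_p)\setminus\cK_0$ become zero columns of the restricted $B$ while possibly remaining nonzero in the restricted $A$, so Lemma \ref{rowB=0} part \emph{4} does not let you delete them and the induction hypothesis does not apply as stated. Second, the final clause ``since Theorem \ref{CWpairthm1} part \emph{3} caps the support of a minimal optimal at $m$, the vector $\y$ is automatically minimal optimal'' is a non sequitur: that theorem bounds the support of vectors that are already minimal optimal; it does not say that an optimal vector with $m$ positive coordinates has no optimal vector of strictly smaller support beneath it. Ruling that out under $S$-irreducibility is exactly what the paper's Theorem \ref{WGPFA-P} accomplishes (every minimal weakly optimal vector lies in $\cE(A,B)$, and $S$-irreducibility forces $|\supp\w|=m$ there), and your argument needs an equivalent substitute.
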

 \begin{proof}
 Assume first that the pair $A,B$ is $S$-irreducible.  Let $\cE(A,B)$ be defined as in the Theorem \ref{WGPFA-P}.  We claim that $|\supp \w|=m$ for each $\w\in\cE(A,B)$.  
 Assume to the contrary that $\ell=|\supp \w|<m$.  After relabeling the elements of the set $[n]$ we can assume that $\supp\w=[\ell]$.  Let $\cI$ be zero set of $B_1=B[[m],[\ell]]$.  Recall that $B_{11}=B[[m]\setminus\cI,[\ell]]$ is monomial, and $A_{21}=A[\cI,[\ell]]=0$. 
 Relabel the elements of $[m]$ such that $\cI=\{\ell+1,\ldots,m\}$.  Since  $B$ does not have zero rows there is a subset 
 $\cJ$ of $[n]$ of cardinality $m-\ell$ such that $B[\cI,\cJ]$ is a monomial matrix.  Clearly,
 $[\ell]\cap \cJ=\emptyset$.  Let $\cK=[\ell]\cup \cJ$.  Then $B_1=B[[m], \cK]$ is a monomial matrix, which is direct sum of $B_{11}$ and   $B[\cI,\cJ]$.  Let $A_1=A[[m], \cK]$.  Since $A,B$ is $S$-ireducible it follows that  $C=B_1^{-1}A_1$ is irreducible.  This contradicts the fact that $C[\cI,[\ell]]=0$.
 
 Thus for each $\w\in\cE(A,B)$ we have that $|\supp \w|=m$.  Let $\z\in\Pi_m^o$ be the projection of $\w$ on $\supp\w$.  Then $C(\w)=B[[m],\supp\w]^{-1}A[[m],\supp\w]$ is irreducible, and $\z$ is the PF-vector of $C(\w)$.  Hence $B\w=B[[m],\supp\w]\z>\0$.  Theorem \ref{WGPFA-P} yields that $\rho(C(\w))=r(A,B,\w)$. 
 
 Vice versa, each $\cK\in \cM(B)$ induces $\w\in\cE(A,B)$ as follows.  Let $\z\in\Pi_m^o$ the PF-eigenvector of  $C=B[[m],\cK]^{-1}A([m],\cK])$.  Then $\w\in\Pi_n$ is obtained from $\z$ by adding zero coordinates.  As $C$ is irreducible we deduce that $\w\in\cE(A,B)$.
  
Recall that $\hat\rho(A,B)$ is given by \eqref{charhatrhoGPFA-P}.  Assume that $\y$ is a minimal weak optimal.  So $\y\in\cE(A,B)$ and $B\y>\0$.  Lemma \ref{Bpos} yields that $\y$ is minimal optimal.  Hence $\rho(A,B)=\hat\rho(A,B)$. Characterization \eqref{rhoABWCfor} follows from \eqref{charhatrhoGPFA-P}.   Furthermore $\y$ is GPF-vector.   This proves the theorem in the case where $A,B$ is $S$-irreducible.
 
 Assume now that $A,B$ is not $S$-irreducible.  Let $J\in\R^{m\times n}$ be a matrix whose all entries are $1$.  For $l\in\N$ denote let $A_l=A+\frac{1}{l}J$.  So $A_l>0$, and Proposition \ref{scGPFA-P} implies that the pair$A_l,B$ is $S$-irreducible.  Fix $\cK\in \cM(B)$.
 Let $C=B[[m],\cK]^{-1}A([m],\cK])$ and $C_l=B[[m],\cK]^{-1}A_l([m],\cK])$.
 Observe that $A_{l+1}\le A_l$ for $l\in\N$.  Hence $C_{l+1}\le C_l$ for $l\in\N$, and
  $\lim_{l\to\infty} C_l=C$.  Therefore $ \rho(C_l),l\in\N$ is a decreasing sequence which converges to $\rho(C)$.
 Apply characterization \eqref{rhoABWCfor} to $\rho(A_l,B)$.  Let $\rho_1(A,B)$ be the right hand side of \eqref{rhoABWCfor}.  It now follows that $\lim_{l\to\infty}\rho(A_l,B)=\rho_1(A,B)$.  Part \emph{5} of Lemma \ref{Bpos} yields that $\rho(A,B)=\rho_1(A,B)$.  Hence the characterization \eqref{rhoABWCfor}  holds.
 
 Assume that $\cK\in\cM(B)$ and $\rho(A,B)=\rho(B[[m],\cK]^{-1}A([m],\cK])$.
 For each $l\in\N$ let $\w_l\in\Pi_n$ be be the vector induced by the PF-eigenvector $\0<\z_l\in\Pi_m^o$ of  $B[[m],\cK]^{-1}A_l([m],\cK])$.  Let $r_l=\rho(B[[m],\cK]^{-1}A_l([m],\cK])$.  Then $A_l\w_l=r_lB\w_l$.  Pick a convergent subsequence
 $\w_{l_k}\to \y, k\to\infty$.  Then $A\y=\rho(A,B)B\y$. 
 
 We claim that $\y$ is optimal.  Choose $\x_l\in\Pi_n^o$ such that $r(A_l,B,\x_l)\le (\rho(A_l,B)+\frac{1}{l})\x_l$.  Clearly,  $\rho(A,B)\le\rho(A_l,B)\le r_l$.  Set $\vv_l=(1-\frac{1}{l})\y+\frac{1}{l}\x_l>\0$. Then $\lim_{k\to\infty} \vv_{l_k}=\y$, and 
 $\lim_{k\to\infty} r(A_l,B,\x_l)=\rho(A,B)$.  Hence $\y$ is optimal.
 \end{proof}
 
 To summarize, if a WN-pair is $S$-irreducible then each minimal optimal $\y$, which is a GPF-vector,  and corresponds to an optimal choice of one transmitter for each receiver.
 If a WN-pair is not $S$-irreducible there exist an optimal $\y$, which is a GPF-vector, and corresponds to an optimal choice of one transmitter for each receiver.  However, for some receivers all their transmitters may shut off.  
This can happen in the classical case where $m=n$ and $B=I_m$.  For example:
\[A=\left[\begin{array}{cccc}0&1&1&1\\1&0&1&1\\0&0&0&1\\0&0&1&0\end{array}\right].\]
Then the only optimal $\y\in\Pi_4$ is $\y=\frac{1}{2}(1,1,0,0)\trans$.
 \section{A pair of CP-operators}\label{sec:CPop}
 Recall that $\C^n$ is equipped with the standard inner product $\langle\x,\y\rangle=\y^*\x$.
Given a finite dimensional inner product space over $\C$, with a product $\langle\cdot,\cdot\rangle$, we denote by $\rS(\V)\supset \rS_{+}(\V)\supset \rS_{+,1}(\V)$ the real space of self adjoint operators $A:\V\to\V$, the cone of positive semidefinite operators and the convex set of all positive semidefinite operators with trace $1$.  By fixing an orthonormal basis $\e_1,\ldots,\e_n$ in $\V$ we identify
$\rS(\V), \rS_{+}(\V), \rS_{+,1,}(\V)$ with $\rH_n,\rH_{+,n},\rH_{+,1,n}$ respectively.

On $\C^{n\times n}$ we have the standard inner product $\langle U,V\rangle=\tr V^*U$.  For $X\in\rH_n$ we denote by $\lambda_1(A)\ge \cdots\ge \lambda_n(X)$ the $n$-real eigenvalues of $X$ counted with their multiplicities.
 Recall that for $X,Y\in\rH_{n}$ we say that $Y\succeq X$ if $Y-X\in\rH_{+,n}$, i.e., $Y-X$ is positive semidefinite.  Denote by $\rH_{++,n}$ the interior of the cone $\rH_{+,n}$, i.e., the open set of positive definite $n\times n$ Hermitian matrices.  Then $Y\succ X$ if $Y-X\in\rH_{++,n}$.  Let $\rH_{++,1,n}=\rH_{+,1,n}\cap \rH_{++,n}$.
 Denote by $CP(n,m)$ the cone of completely positive operators from $\rH_n$ to $\rH_m$, given by \eqref{defccpop}.
 In the rest of the paper  we assume that $\cA,\cB\in CP(n,m)$.  Then we can define $\rho(\cA,\cB)$  and $\hat\rho(\cA,\cB)$ as in \eqref{defrABxK},\eqref{defrhoABK} and \eqref{defhrhoABK} with respect to the cones $\mathbf{K}_1=\rH_{+,n},\mathbf{K}_2=\rH_{+,m}$.   We call $Y\in\rH_{+,n}\setminus\{0\}$  weak optimal if  $\hat\rho(\cA,\cB)=r(\cA,\cB,Y)$. 
We call $Y\in\rH_{+,n}\setminus\{0\}$ optimal if the following conditions hold:  First,    $\rho(\cA,\cB)=r(\cA,\cB,Y)$.  Second, there exists a sequence $X_l\in \rH_{++,n}$ such that $\lim_{l\to\infty} X_l=Y$ and $\lim_{l\to\infty} r(\cA,\cB,X_l)=r(\cA,\cB,Y)$.   We say that $Y$ is a generalized Perron-Frobenius vector or weak generalized Perron-Frobenius vector if
 \begin{equation}\label{GPFcpop}
 \cA(Y)=\rho(A,B)\cB(Y), \textrm{ or } \cA(Y)=\hat \rho(A,B)\cB(Y),\quad Y\in\rH_{+,n}\setminus\{0\},
 \end{equation}
 respectively.  
 
 Given a pair $\cC,\cD\in CP(n,m)$ we say that $\cC\succeq \cD$ or 
 $\cC\succ \cD$ if for each $X\in\rH_{+,1,n}$ we have that $\cC(X)\succeq \cD(X)$ or 
 $\cC(X)\succ \cD(X)$ respectively.  If $\cD=0$ the $\cC\succ 0$ is called a positive CP-operator.  An example of positive CP-operator $\cI(n,m)$ is the operator  $\cI(n,m)(Z)=( \tr Z)I_m$ for any $Z\in\C^{n\times n}$.  (We will justify briefly why $\cI(n,m)$ is completely positive in the next section.)
 
 In this paper we will concentrate on $\hat\rho(\cA,\cB)$ since this quantity is much easier to deal with.  When the proofs of our results for CP-pair very similar for the matrix pair $A,B\in\R_+^{m\times n}$ we will omit the proofs.

 For $\cA,\cB\in CP(n,m)$ and $X\in\rH_{n,+,1}$ we give a formula to compute $r(\cA,\cB,X)$.  To do that we need to recall the classical definition of the Rayleigh quotient for $A,B\in\rH_{+,m}$ \cite[\S4.4]{Fri15}:
 \begin{lemma}\label{releigAB}  Let $A,B\in\rH_{+,m}$.  Define
\begin{equation}\label{releigAB1}
\lambda(A,B)=\sup_{\x\in\C^m\setminus \{\0\}}\frac{\x^*A \x}{\x^*B\x}.
\end{equation}
Then the above supremum is achieved for some $\y\in\C^m\setminus \{\0\}$:
\begin{enumerate}
\item $\lambda(A,B)=\infty$ if and only if $\ker B$ is not a subset of $\ker A$.
Then $\y\in\ker B\setminus \ker A$.
\item If $A=B=0$ then $\lambda(A,B)=0$ and
$\y$ is any nonzero vector in $\C^m$.  
\item Assume that $\ker B\subseteq \ker A$ and $\dim\ker B<m$.  Let $\V\subseteq \C^m$ be the orthogonal complement of $\ker B$. Then $\V$ is an invariant subspace of $A$ and $B$.  Denote
by $A_1,B_1$ the restricitions of $A,B$ to $\V$.  Then $B_1^{-1}A_1$ is a diagonalizable operator in $\V$, with nonnegative eigenvalues.   Furthermore
\begin{equation}\label{releigAB2}
\lambda(A,B)=\lambda(A_1,B_1)=\rho(B_1^{-1}A_1).
\end{equation}
Morevover, a maximizing $\y$ of the quotient \eqref{releigAB1} can be chosen to be an eigenvector of $B_1^{-1}A_1$ corresponding to $\rho(B_1^{-1}A_1)$.
\end{enumerate}
In particular
\begin{equation}\label{infcharlambAB}
\lambda(A,B)=\inf\{t\ge 0,\; tB\succeq A\}.
\end{equation}
\end{lemma}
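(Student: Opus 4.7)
The plan is to split into the three cases enumerated in the statement, then conclude with the infimum formula. The key reduction is to eliminate $\ker B$ by restricting to its orthogonal complement $\V$, after which $B_1$ becomes invertible and we can reduce the generalized Rayleigh quotient to an ordinary one.

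\emph{Step 1 (the infinite and trivial cases).} If $A=B=0$ the ratio in \eqref{releigAB1} is $0/0=0$ for every nonzero $\y$, giving part \emph{2}. Suppose now $\ker B\not\subseteq \ker A$ and pick $\y\in\ker B\setminus\ker A$. Because $A\succeq 0$, writing $A=A^{1/2}A^{1/2}$ gives $\y^*A\y=\|A^{1/2}\y\|^2>0$ (the kernels of $A$ and $A^{1/2}$ coincide), while $\y^*B\y=0$, so the ratio is $+\infty=\lambda(A,B)$; this settles part \emph{1}. For the converse direction in part \emph{1}, it suffices to establish the finite value claim of part \emph{3}.

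\emph{Step 2 (reduction to $\V$).} Assume $\ker B\subseteq\ker A$ and $\dim\ker B<m$; let $\V=(\ker B)^{\perp}$. Since $A$ and $B$ are self-adjoint and their kernels contain $\ker B$, both $\ker B$ and its orthogonal complement $\V$ are invariant: for $\x\in\V$ and $\z\in\ker B\subseteq\ker A$, $\langle A\x,\z\rangle=\langle\x,A\z\rangle=0$, and similarly for $B$. Writing $\x=\x_0+\x_1$ with $\x_0\in\ker B\subseteq\ker A$ and $\x_1\in\V$, one gets $\x^*A\x=\x_1^*A_1\x_1$ and $\x^*B\x=\x_1^*B_1\x_1$, so $\lambda(A,B)=\lambda(A_1,B_1)$, where $A_1,B_1$ are the restrictions to $\V$ and $B_1\succ 0$.

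\emph{Step 3 (the Rayleigh quotient on $\V$).} On $\V$ the matrix $B_1^{1/2}$ is invertible, so the substitution $\x_1=B_1^{-1/2}\y$ transforms the quotient into $\y^{*}(B_1^{-1/2}A_1B_1^{-1/2})\y/\y^*\y$, whose supremum is the largest eigenvalue of the hermitian positive semidefinite matrix $C=B_1^{-1/2}A_1B_1^{-1/2}$, attained on its top eigenspace. Since $C$ is similar to $B_1^{-1}A_1$ via $B_1^{1/2}$, the operator $B_1^{-1}A_1$ is diagonalizable with the same nonnegative spectrum, and $\lambda(A_1,B_1)=\rho(C)=\rho(B_1^{-1}A_1)$. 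If $\y$ is a top eigenvector of $C$, then $B_1^{-1/2}\y$ is an eigenvector of $B_1^{-1}A_1$ for $\rho(B_1^{-1}A_1)$ and realizes the supremum; extending by $\0$ on $\ker B$ gives a maximizer in $\C^m$. Combined with Step~1, this also confirms that finiteness of $\lambda(A,B)$ forces $\ker B\subseteq\ker A$, completing parts \emph{1} and \emph{3}.

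\emph{Step 4 (infimum characterization).} If $tB\succeq A$ then $t\,\x^*B\x\ge\x^*A\x$ for all $\x$, which immediately yields $t\ge\lambda(A,B)$ by taking the sup over $\x$ with $\x^*B\x>0$ (and noting that $tB\succeq A$ forces $\ker B\subseteq\ker A$, so the ratio is not $\infty$). Conversely, set $t_0=\lambda(A,B)$; we must show $t_0B\succeq A$. For any $\x\in\C^m$ decomposed as in Step~2, $\x^{*}(t_0B-A)\x=\x_1^{*}(t_0B_1-A_1)\x_1$. But by Step~3, $t_0B_1-A_1=B_1^{1/2}(t_0 I-C)B_1^{1/2}$ with $t_0=\rho(C)\ge \lambda_i(C)$ for every eigenvalue of $C$, so $t_0I-C\succeq 0$ on $\V$ and hence $t_0B_1\succeq A_1$. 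This proves \eqref{infcharlambAB}; the case $\lambda(A,B)=\infty$ is vacuous, and the case $A=B=0$ gives infimum $0$ from $t=0$. The only step that requires genuine care is Step~2, where one must verify that $\V$ is simultaneously invariant under both $A$ and $B$ and that the quotient truly depends only on the $\V$-component; everything downstream is then a clean consequence of the spectral theorem applied to $C$.
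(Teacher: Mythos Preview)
Your proof is correct and follows essentially the same approach as the paper's: reduce to $\V=(\ker B)^\perp$, then use the substitution $\x_1=B_1^{-1/2}\y$ to convert the generalized Rayleigh quotient into the ordinary one for the hermitian matrix $C=B_1^{-1/2}A_1B_1^{-1/2}$, which is similar to $B_1^{-1}A_1$. Your treatment is somewhat more explicit than the paper's in two places---the invariance argument for $\V$ under $A$ (the paper just asserts $A\V\subseteq\V$) and the verification of \eqref{infcharlambAB} (the paper says it ``follows straightforward'')---but the underlying structure is identical.
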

\begin{proof} Parts \emph{(1)} and \emph{(2)} are straightforward.  We now prove \emph{(3)}.  Suppose first that $\ker B=\{\0\}$.  So $B\in\rH_{++,m}$.  Let $C=\sqrt{B}$ be the unique root of $B$ such that $C\in\rH_{++,m}$.  Set $\x=C^{-1}\z$.  Then 
$\frac{\x^*A \x}{\x^*B\x}=\frac{\z^*C^{-1}A C^{-1}\z}{\z^*\z}$.  Thus the supremum  \eqref{releigAB1} is the maximum characterization of the maximum eigenvalue of $C^{-1}AC^{-1}\in\rH_{+,m}$.  So \[\lambda(A,B)=\lambda(C^{-1}AC^{-1},I_m)=\rho(C^{-1}AC^{-1})
=\rho(C^{-2}A)=\rho(B^{-1}A).\]
As $C^{-1}AC^{-1}\in\rH_{+,m}$ has nonnegative eigenvalues it follows that $B^{-1}A$ is diagonizable with nonnegative eigenvalues.

Suppose now that $1\le\dim\ker B<m$.  Let $\V=\ker B^{\perp}$.  Then $A\V\subseteq \V=B\C^n$.  Suppose that $\x\in\ker B\setminus\{\0\}$.  Then $\frac{\x^*A \x}{\x^*B\x}=\frac{0}{0}=0$.  Assume that $\x\in \C^m\setminus\ker B$.  Then $\x=\uu+\vv$, where $\uu\in\V\setminus\{\0\}$ and $\vv\in\ker B$.  Clearly$\frac{\x^*A \x}{\x^*B\x}=\frac{\uu^*A\uu}{\uu^*B\uu}=\frac{\uu^*A_1\uu}{\uu^*B_1\uu}$.
Apply the previous arguments to $A_1,B_1\in\rS_+(\V)$ to deduce that $\lambda(A,B)=\lambda(A_1,B_1)=\rho(B_1^{-1}A_1)$.

The characterization \eqref{infcharlambAB} follows straightforward from \eqref{releigAB1}.
\end{proof}
 \begin{corollary}\label{rcAcBXfor} Let $\cA,\cB\in CP(n,m)$ and $X\in\rH_{+,n}\setminus \{0\}$.  Then $r(\cA,\cB,X)=\lambda(\cA(X),\cB(X))$.
 \end{corollary}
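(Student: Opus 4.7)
The plan is to observe that both quantities in the claimed equality are defined as the infimum of the same set of scalars, so the identity reduces to a direct comparison of definitions once the characterization \eqref{infcharlambAB} is in hand.

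First I would set $A = \cA(X)$ and $B = \cB(X)$. Since $\cA, \cB \in CP(n,m)$ and $X \in \rH_{+,n}$, both $A$ and $B$ lie in $\rH_{+,m}$, so the Rayleigh-quotient machinery of Lemma \ref{releigAB} is applicable. The defining formula \eqref{defrABxK} for $r(\cA,\cB,X)$ (in the cone setting $\mathbf{K}_2 = \rH_{+,m}$) reads
\[
r(\cA,\cB,X) \;=\; \inf\{t \in [0,\infty] : t\cB(X) - \cA(X) \in \rH_{+,m}\}
\;=\; \inf\{t \geq 0 : tB \succeq A\}.
\]
On the other hand, the characterization \eqref{infcharlambAB} established in Lemma \ref{releigAB} gives
\[
\lambda(A,B) \;=\; \inf\{t \geq 0 : tB \succeq A\}.
\]
The two right-hand sides are the same set of admissible $t$, hence $r(\cA,\cB,X) = \lambda(A,B) = \lambda(\cA(X),\cB(X))$.

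There is essentially no obstacle: the only points that need a brief sanity check are the edge cases. If $\ker \cB(X) \not\subseteq \ker \cA(X)$, then part \emph{1} of Lemma \ref{releigAB} gives $\lambda(\cA(X),\cB(X)) = \infty$, and correspondingly no finite $t$ can yield $t\cB(X) \succeq \cA(X)$ (as one sees by testing against a vector in $\ker \cB(X) \setminus \ker \cA(X)$), so $r(\cA,\cB,X) = \infty$ as well, consistent with the convention in \eqref{defrABxK}. If $\cA(X) = \cB(X) = 0$ both quantities are $0$. In the remaining case covered by part \emph{3} of the lemma, the infimum is attained and equals $\rho(B_1^{-1}A_1)$; this plays no role beyond confirming that the set $\{t \geq 0 : tB \succeq A\}$ is a genuine nonempty interval $[\lambda(A,B), \infty)$, so the two infima coincide. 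The identity follows.
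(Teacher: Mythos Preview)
Your proof is correct and is exactly the intended argument: the corollary is an immediate consequence of the characterization \eqref{infcharlambAB} in Lemma~\ref{releigAB}, which is why the paper states it without proof. Your extra sanity checks on the edge cases are fine but not needed, since \eqref{infcharlambAB} already covers them uniformly.
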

 \begin{lemma}\label{rhoABachievqc}  
 Let $\cA,\cB\in CP(n,m)$.   Then
 \begin{enumerate}
 \item $\hat\rho(\cA,\cB)\le \rho(\cA,\cB)$.
 \item Assume that $\cA_1,\cB_1\in CP(n,m)$ and $\cA_1\preceq \cA, \cB\preceq \cB_1$. Then $\rho(\cA_1,\cB_1)\le \rho(\cA,\cB)$ and $\hat\rho(\cA_1,\cB_1)\le \hat \rho(\cA,\cB)$.
 \item There exists a weak optimal $Y\in\rH_{n,+,1}$.
 \item Assume that there exists a weak optimal $Y$ such that either $\cB(Y)\succ 0$ or $\cA(Y)\succ 0$.  If $\rho(\cA,\cB)<\infty$ then $\rho(\cA,\cB)=\hat\rho(\cA,\cB)$.
 \item Suppose that $\rho(\cA,\cB)<\infty$, and either $\cA\succ 0$ or $\cB\succ 0$.  Then $\rho(\cA,\cB)=\hat\rho(A,B)$.
 \item Assume that $0\prec \cD_l\in CP(n,m)$ for $l\in \N$ and $\lim_{l\to\infty} \cD_l=0$.  Then
$\lim_{l\to\infty}\rho(\cA,\cB+\cD_l)=\hat\rho(\cA,\cB)$.
\item Assume that $0\prec \cD_l\in CP(n,m)$ for $l\in \N$ and $\lim_{l\to\infty} \cD_l=0$.
If $\cB(I_n)\succ 0$ then 
$\lim_{l\to\infty}\rho(\cA+\cD_l,\cB)=\rho(\cA,\cB)$.
 \end{enumerate}
 \end{lemma}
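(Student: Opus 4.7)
The plan is to follow the blueprint of Lemma \ref{Bpos} almost line by line, replacing coordinatewise inequalities in $\R^m_+$ by the Loewner order on $\rH_m$ and using Corollary \ref{rcAcBXfor} to control $r(\cA,\cB,X)$.

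Parts \emph{1} and \emph{2} are immediate: \emph{1} is the inclusion $\rH_{++,n}\subset \rH_{+,n}\setminus\{0\}$, and \emph{2} follows from the characterization \eqref{infcharlambAB} applied via Corollary \ref{rcAcBXfor}, since $\cA_1(X)\preceq \cA(X)$ and $\cB(X)\preceq \cB_1(X)$ force $\{t\ge 0:\, t\cB_1(X)\succeq \cA_1(X)\}\supseteq\{t\ge 0:\, t\cB(X)\succeq \cA(X)\}$, hence $r(\cA_1,\cB_1,X)\le r(\cA,\cB,X)$.

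For \emph{3}, if $\hat\rho(\cA,\cB)=\infty$ any $X\in \rH_{+,1,n}$ works; otherwise pick $Y_k\in\rH_{+,1,n}$ with $t_k=r(\cA,\cB,Y_k)$ a nonincreasing sequence converging to $\hat\rho(\cA,\cB)$. The set $\rH_{+,1,n}$ is compact, so along a subsequence $Y_k\to Y\in\rH_{+,1,n}$. For $k\ge l$ one has $t_l\cB(Y_k)-\cA(Y_k)\succeq 0$; letting $k\to\infty$ and using continuity of $\cA,\cB$ gives $t_l\cB(Y)\succeq \cA(Y)$, so $r(\cA,\cB,Y)\le t_l$ for every $l$, and \eqref{infcharlambAB} yields $r(\cA,\cB,Y)=\hat\rho(\cA,\cB)$.

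For \emph{4}, suppose first $\cB(Y)\succ 0$ and let $t_0=\hat\rho(\cA,\cB)=r(\cA,\cB,Y)$. Given arbitrary $X\in\rH_{++,n}$, positive definiteness of $\cB(Y)$ gives $s\ge 1$ with $\cB(X)\preceq s\cB(Y)$, and for $f\in[0,1]$ the convex combination $X(f)=fX+(1-f)Y\in\rH_{++,n}$ (for $f>0$) satisfies $\cB(X)\preceq s\cB(X(f))$. Writing
\begin{align*}
\cA(X(f)) &= f\cA(X)+(1-f)\cA(Y)\\
&\preceq f\,r(\cA,\cB,X)\cB(X)+(1-f)t_0\cB(Y)\\
&= f\bigl(r(\cA,\cB,X)-t_0\bigr)\cB(X)+t_0\cB(X(f))\\
&\preceq \bigl(fs(r(\cA,\cB,X)-t_0)+t_0\bigr)\cB(X(f)),
\end{align*}
and letting $f\searrow 0$ gives $\rho(\cA,\cB)\le t_0$, hence equality. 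If instead $\cA(Y)\succ 0$, note that the assumption $\rho(\cA,\cB)<\infty$ already implies that the infimum in \eqref{defrABxK} is finite for some interior point, and the relation $\cA(Y)\preceq t_0\cB(Y)$ then forces $\cB(Y)\succ 0$ and $t_0>0$, reducing to the first case. Part \emph{5} is then immediate: if $\cA\succ 0$ or $\cB\succ 0$ then for any weakly optimal $Y\in\rH_{+,n}\setminus\{0\}$ supplied by part \emph{3}, $\cA(Y)\succ 0$ or $\cB(Y)\succ 0$, so part \emph{4} applies.

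Parts \emph{6} and \emph{7} are epsilon-squeeze arguments modeled on parts \emph{4}--\emph{5} of Lemma \ref{Bpos}. For \emph{6}, decompose $\cB+\cD_l$ by observing that $\cD_l\succeq 0$ and $\lim_l\cD_l=0$ mean that for every $\varepsilon>0$ there is $M(\varepsilon)$ with $\cB\preceq \cB+\cD_l\preceq (1+\varepsilon)(\cB+\cD_l)$ and, conversely (by positive definiteness of the perturbation), $(1+\varepsilon)^{-1}\cB\preceq \cB+\cD_l$ eventually — the crucial point here is that $\cD_l\succ 0$ guarantees $\cB+\cD_l\succ 0$, so by part \emph{5} one has $\rho(\cA,\cB+\cD_l)=\hat\rho(\cA,\cB+\cD_l)$. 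Monotonicity (part \emph{2}) then sandwiches $\rho(\cA,\cB+\cD_l)$ between quantities converging to $\hat\rho(\cA,\cB)$, and a subsequence-limit argument on minimizers (using compactness of $\rH_{+,1,n}$ exactly as in the proof of part \emph{3}) gives the limit identity, including the case $\hat\rho(\cA,\cB)=\infty$. For \emph{7}, the assumption $\cB(I_n)\succ 0$ plays the role of ``$B$ has no zero row'', ensuring $\rho(\cA,\cB)\le r(\cA,\cB,I_n/n)<\infty$ and providing, for every $\varepsilon>0$, an interior optimizer $X(\varepsilon)\in\rH_{++,1,n}$ with $\cB(X(\varepsilon))\succ 0$; then $\cD_l(X(\varepsilon))\preceq \varepsilon\,\cB(X(\varepsilon))$ for $l$ large enough, and one gets $\rho(\cA+\cD_l,\cB)\le (1-\varepsilon)^{-1}(\rho(\cA,\cB)+\varepsilon)$ eventually, while the reverse inequality $\liminf \rho(\cA+\cD_l,\cB)\ge \rho(\cA,\cB)$ is just monotonicity.

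The main technical obstacle is the continuity-under-convex-combination step in part \emph{4}: in the matrix case of Lemma \ref{Bpos} the proof relied on the order $B\x\le sB\y$ componentwise, and the analog here requires the strict Loewner positivity $\cB(Y)\succ 0$ to supply a uniform $s\ge 1$ with $\cB(X)\preceq s\cB(Y)$; without this, the sandwich estimate collapses. The rest of the argument then threads through almost verbatim.
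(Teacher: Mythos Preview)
Your proposal is correct and follows essentially the same approach as the paper: the paper's own proof consists almost entirely of pointers back to Lemmas \ref{rhoABachiev} and \ref{Bpos} (with the trivial modification $\cC_l=\cB$ resp.\ $\cC_l=\cA$ in parts \emph{6}--\emph{7}), and you have carried out exactly those arguments in the Loewner order, supplying the details the paper omits. The only cosmetic slips are that in part \emph{6} your ``sandwich'' is really one-sided monotonicity plus the compactness/subsequence argument (which you do state), and in part \emph{7} the factor $(1-\varepsilon)^{-1}$ is a leftover from the general matrix case and is unnecessary here since $\cC_l=\cA$ exactly; neither affects correctness.
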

 \begin{proof}
 \emph{(1)}  Trivial.
 
 \noindent
 \emph{(2)}  Straightforward from the definitions.
 
 \noindent
 \emph{(3)}.  As in the proof of part \emph{(1)} of Lemma \ref{rhoABachiev}.
 
 \noindent
 \emph{(4)}  As in the proof of part \emph{(2)} of Lemma \ref{Bpos}.
 
 \noindent
 \emph{(5)}  As in the proof of part \emph{(3)} of Lemma \ref{Bpos}.
 
 \noindent
 \emph{(6)}  We use similar arguments to the proof of part \emph{(4)} of Lemma \ref{Bpos} with the following modifications.  Let $\cB_l=\cC_l+\cD_l$, where $\cC_l=\cB$.   Then the arguments of the part \emph{(4)} of Lemma \ref{Bpos} apply.
 
 \noindent
 \emph{(7)}  We use similar arguments to the proof of part \emph{(5)} of Lemma \ref{Bpos} with the following modifications.  Let $\cA_l=\cC_l+\cD_l$, where $\cC_l=\cA$.   Then the arguments of the part \emph{(5)} of Lemma \ref{Bpos} apply.
 \end{proof}
 Let $\cC:\rH_{n}\to H_m$ be a linear operator.  Then there exists a dual operator $\cC^{\vee}:\rH_m\to \rH_n$ which is defined as follows.  Recall that on $\rH_n$ one has the inner product $\langle X,Z\rangle=\tr XZ$, where $\tr W$ is the trace of the matrix $W\in\C^{n\times n}$.  Then $\cC^\vee: \rH_m\to\rH_n$ is defined uniquely by the property $\langle\cC(X),Z\rangle=\langle X,\cC^\vee(Z)\rangle$ for all $X\in\rH_n$ and $Z\in\rH_m$.   Assume that $\cC$ is CP-operator given by \eqref{defccpop}.  Then $\cC$ is called \emph{unital} if $\cC(I_n)=I_m$.   Recall that  $\cC^\vee$ is also completely positive and given by
 $C^\vee(Y)=\sum_{j=1}^k T_j^*YT_j$.  Thus $\cC$ is a quantum channel if and only if $\cC^\vee$ is unital.  
 
The following Lemma is an analog of Lemma \ref{lowestrhoAB}:
\begin{lemma}\label{lowbdcp}Let $\cA,\cB\in CP(n,m)$.  Then 
\begin{enumerate}
\item $\hat\rho(\cA,\cB)=0$ if and only if $\cA^{\vee}(I_m)$ is not positive definite.
\item Assume that $\cA^{\vee}(I_m)$ is positive definite.  Then 
\begin{equation}\label{lowbdrhABcp}
\hat\rho(\cA,\cB)\ge \rho(\cA^{\vee}(I_m)^{-1}\cB^{\vee}(I_m))^{-1}.
\end{equation}
In particular, if $\cA$ and $\cB$ are quantum channels then $\rho(\cA,\cB)\ge 1$. 
\end{enumerate}
 \end{lemma}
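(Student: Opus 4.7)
The plan is to exploit the duality between the operator inequality $t\cB(X)\succeq \cA(X)$ that defines $r(\cA,\cB,X)$ and the scalar trace inequalities obtained by pairing it with $I_m$. The key identity is the adjoint relation $\tr \cA(X)=\langle\cA(X),I_m\rangle=\langle X,\cA^\vee(I_m)\rangle$ (and similarly for $\cB$), which allows us to replace operator statements about $\cA(X)$ and $\cB(X)$ with trace pairings against $\cA^\vee(I_m)$ and $\cB^\vee(I_m)$. I would prove part \emph{2} first and then deduce part \emph{1} from it.

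Write $\tilde A=\cA^\vee(I_m)$, $\tilde B=\cB^\vee(I_m)$ and, under the assumption $\tilde A\succ 0$, set $\mu=\rho(\tilde A^{-1}\tilde B)$. By Lemma~\ref{releigAB} applied with the roles of $A,B$ interchanged, $\mu=\lambda(\tilde B,\tilde A)$, and the characterization \eqref{infcharlambAB} gives the operator inequality $\mu\tilde A\succeq \tilde B$ (with the infimum attained, since the condition is closed). Pairing with any $X\in\rH_{+,n}$—using that the Frobenius inner product of two PSD matrices is nonnegative—yields $\mu\langle X,\tilde A\rangle\ge \langle X,\tilde B\rangle$, i.e.\ $\mu\tr\cA(X)\ge \tr\cB(X)$. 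On the other hand, Corollary~\ref{rcAcBXfor} together with \eqref{infcharlambAB} gives $r(\cA,\cB,X)\cB(X)\succeq \cA(X)$ whenever the former is finite; taking the trace yields $r(\cA,\cB,X)\tr\cB(X)\ge \tr\cA(X)$. Combining the two inequalities produces $r(\cA,\cB,X)\ge \tr\cA(X)/\tr\cB(X)\ge 1/\mu$ whenever $\tr\cB(X)>0$, and taking the infimum over $X\in\rH_{+,n}\setminus\{0\}$ gives the bound \eqref{lowbdrhABcp}.

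For part \emph{1}: the ``only if'' direction is the contrapositive of the lower bound in part \emph{2}, since $\mu<\infty$ implies $\hat\rho(\cA,\cB)>0$. The ``if'' direction is a direct construction: if $\tilde A$ is not positive definite, pick a unit vector $v\in\ker\tilde A$ and set $X=vv^*\in\rH_{+,1,n}$. Then $\tr\cA(X)=v^*\tilde A v=0$, which forces $\cA(X)=0$ since $\cA(X)\in\rH_{+,m}$, hence $0\cdot \cB(X)\succeq \cA(X)$ and $r(\cA,\cB,X)=0$. Finally, if $\cA$ and $\cB$ are quantum channels, then $\cA^\vee$ and $\cB^\vee$ are unital, so $\tilde A=\tilde B=I_n$ and $\mu=\rho(I_n)=1$, giving $\rho(\cA,\cB)\ge \hat\rho(\cA,\cB)\ge 1$.

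The only genuine subtlety is the edge case $\tr\cB(X)=0$ in the bound derivation: then $\cB(X)=0$, so either $\cA(X)\ne 0$ and $r(\cA,\cB,X)=\infty$ (which is $\ge 1/\mu$ trivially), or $\cA(X)=0$—but the latter is impossible once $\tilde A\succ 0$, since $\tr\cA(X)=\langle X,\tilde A\rangle>0$ for every $X\in\rH_{+,n}\setminus\{0\}$. This same observation is what makes the bound \emph{strictly positive} and hence delivers the non-trivial direction of part \emph{1}; verifying it is the main technical point, but it is routine once one has the adjoint identity in hand.
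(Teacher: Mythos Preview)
Your proof is correct and follows essentially the same approach as the paper: take the trace of the operator inequality $r(\cA,\cB,X)\,\cB(X)\succeq\cA(X)$, use the adjoint identity $\tr\cA(X)=\langle X,\cA^\vee(I_m)\rangle$, and bound the resulting ratio by $\lambda(\cB^\vee(I_m),\cA^\vee(I_m))$. The only cosmetic differences are that the paper bounds the trace ratio via the spectral decomposition of $X$ (a mediant-type inequality) whereas you invoke the operator inequality $\mu\tilde A\succeq\tilde B$ from \eqref{infcharlambAB} directly, and the paper proves part \emph{1} first rather than deducing its ``only if'' direction from part \emph{2}; neither difference is substantive.
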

 \begin{proof}  \emph{(1)}  Clearly, $\hat\rho(\cA,\cB)=0$ if and only if there exists $X\in\rH_{+,1,n}$ such that $\cA(X)=0$.  Recall that $Y\in\rH_{m,+}$ is zero if and only if $\langle Y,I_m\rangle=0$.  So $\cA(X)=0$ if and only if $\langle \cA(X),I_m\rangle=\langle X, \cA^{\vee}(I_m)\rangle=0$.  Hence $\cA^{\vee}(I_m)\not\succ 0$.   
 
 Vice versa, assume that $\cA^{\vee}(I_m)\not\succ 0$.   Therefore there exists a vector $\x\in\C^n$, $\x^*\x=1$, such that $\cA^{\vee}(I_m)\x=\0$.  In particular, 
 \[0=\x^*\cA^{\vee}(I_m)\x=\langle \x\x^*, \cA^{\vee}(I_m)\rangle =\langle \cA(\x\x^*),I_m\rangle \Rightarrow \cA(\x\x^*)=0.\]
 \noindent
 \emph{(2)}  Assume that $\cA^{\vee}(I_m)\succ 0$.  Let $X\in\rH_{n,+,1}$.  Assume that $\rank X=r\in[n]$.  Then spectral decomposiiton of $X$ is $X=\sum_{i=1}^r \lambda_i\x_i\x_i^*$, where each $\lambda_i>0$ and $\x_j^*\x_i=\delta_{ij}$ for $i,j\in[r]$.  As $\cA(X)\le r(\cA,\cB,X) \cB(X)$ it follows that
 \[\langle X,\cA^{\vee}(I_m)\rangle=\langle \cA(X),I_m\rangle\le r(\cA,\cB,X)\langle \cB(X),I_m\rangle=r(\cA,\cB,X)\langle  X,\cB^{\vee}(I_m)\rangle\]
 Hence
 \begin{eqnarray*}&&r(\cA,\cB,X)^{-1}\le \frac{\langle  X,\cB^{\vee}(I_m)\rangle}{\langle  X,\cA^{\vee}(I_m)\rangle}=\frac{\sum_{i=1}^r \lambda_i \x_i^*\cB^{\vee}(I_m)\x_i}{\sum_{i=1}^r \lambda_i \x_i^*\cA^{\vee}(I_m)\x_i}\\
&& \le\max_{i\in[r]}\frac{\x_i^*\cB^{\vee}(I_m)\x_i}{\x_i^*\cA^{\vee}(I_m)\x_i}\le \lambda(\cB^{\vee}(I_m),\cA^{\vee}(I_m))=\rho(\cA^{\vee}(I_m)^{-1}\cB^{\vee}(I_m)).
 \end{eqnarray*}
  This establishes  \eqref{lowbdrhABcp}.
 
 Assume that $\cA$ and $\cB$ are quantum channels.  Then $\cA^{\vee}(I_m)=\cB^{\vee}(I_m)=I_n$.  Hence $\rho(\cA^{\vee}(I_m)^{-1}\cB^{\vee}(I_m))=1$ and then $\hat\rho(\cA,\cB)\ge 1$.
 \end{proof}
 \section{Polynomial approximation of $\rho(\cA,\cB)$ for $\delta$-positive $\cB$}\label{sec:polaprhocAB}
 In this section we assume that $\cA,\cB \in CP(n,m)$.  Suppose furthermore that $0<\rho(\cA,\cB)<\infty$.
We now want to apply the bisection algorithm as in the proof of Theorem \ref{epsapproxCWn}.   To this end, for a given $t>0$ we need to decide if the intersection $(t\cB-\cA)(\rH_{+,1,n})\cap \rH_{+,m}$ is empty or not. 
  
Let $X\in\rH_{n}$ and $Z\in \rH_{m}$.  The square of the distance between $t\cB(X)-\cA(X)$ and $Z$ is given by the following quadratic convex function: 
\begin{equation}\label{defftfunc}
f_t(X,Z)=\langle t\cB(X)-\cA(X)-Z,t\cB(X)-\cA(X)-Z\rangle, \quad X\in\rH_n, Z\in\rH_m.
\end{equation}
We assume that $(X,Z)$ are in the cone $\rH_{+,n}\times\rH_{+,m}$ subject to the linear constrain $\tr X=1$.  Note that $f_t(X,Z)\ge 0$.    Finding the distance between the two convex sets $(t\cB-\cA)(\rH_{+,1,n})$ and $\rH_{+,m}$ is equivalent to the  minimization problem 
\begin{equation}\label{minftprob}
\mu_0(t)=\min\{f_t(X,Z), \;X\in\rH_{+,1,n}, Z\in\rH_{+,m}\}.
\end{equation}
This minimization problem can be dealt with by the standard interior point methods \cite{BV04}.  Fix $\tau>0$.   Assume that we found an approximation $\mu(t)\in\Q_{++}$ of $\mu_0(t)$ by an interior method within precision $\tau$ in polynomial time in the data.  If $\mu(t)\ge 2\tau$ then $\dist((t\cB-\cA)(\rH_{+,1,n}),\rH_{+,m})\ge \tau$.  Hence $\rho(\cA, \cB)> t$.  Suppose that $\mu(t)< 2\tau$.  How can we estimate from above $\rho(A,B)$?  
Recall that $\mu(t)=f_t(X(t),Z(t))$.  So 
\begin{eqnarray}\label{rABcpapr}
&&t\cB(X(t))=\cA(X(t))+Z(t)+W, \\ 
&&X(t)\in\rH_{++,1,n}, Z(t)\in \rH_{++,m}, W\in \rH_m, \|W\|^2=\tr W^2=\mu(t)<2\tau.\notag
\end{eqnarray}
To find an upper bound for $\rho(\cA,\cB)$ from \eqref{rABcpapr} we need to assume a positivity condition on $\cB$.    
\begin{definition}\label{deltaposL}  Assume that $m,n$ are two positive integers and $\delta\ge 0$.  Then
\begin{enumerate}
\item  Denote by $\cI(n,m):\rH_n\to \rH_m$ the linear transformation $\cI(n,m)(X)=(\tr X)I_m$ for $X\in\rH_{n}$.
\item
A real linear transformation $\cL:\rH_n\to \rH_m$ is called $\delta$-positive if   $\cL-\delta \cI(n,m)$ is completely positive.
\end{enumerate}
\end{definition}
Let $\cL:\rH_n\to \rH_m$ be a real linear transformation.  Since any $F\in\C^{n\times n}$ is of the form $F=X+\bi Y$, where $X,Y\in\rH_n$ and $(\bi )^2=-1$, it follows that $\C^{n\times n}$ is the complexification of $\rH_n$.  Hence $\cL$ extends to linear operators $\hat\cL:\C^{n\times n} \to \C^{m\times m}$ over $\C$ by letting $\hat\cL(\bi X)=\bi\cL(X)$ for $X\in\rH_n$.  We will identify $\hat\cL$ with $\cL$ and no confusion will arise.  Note that for $U\in\C^{n\times n}$ we have that $\cL(U^*)=\cL(U)^*$.  Associate with $\cL$ the following block Hermitian matrix of dimension $mn$:
\begin{equation}\label{defZL}
Z(\cL)=\left[\begin{array}{cccc}\cL(\e_1\e_1^*)&\cL(\e_1\e_2^*)&\cdots&\cL(\e_1\e_n^*)\\
\cL(\e_2\e_1^*)&\cL(\e_1\e_2^*)&\cdots&\cL(\e_2\e_n^*)\\ 
\vdots&\vdots&\vdots&\vdots\\
\cL(\e_n\e_1^*)&\cL(\e_n\e_2^*)&\cdots&\cL(\e_n\e_n^*)
\end{array}\right], \e_i=(\delta_{1i},\ldots,\delta_{ni})\trans, i\in[n].
\end{equation}
Denote by $\lambda_{mn}(Z(\cL))$ the smallest eigenvalue of $Z(\cL)$.
The following lemma follows from Choi's characterization of completely positive operators \cite{Cho75, FL16}.
\begin{lemma}\label{delposchar}  Let $\delta\ge 0$.  A real linear transformation $\cL:\rH_{n}\to \rH_m$  is $\delta$-positive if and only if 
$\lambda_{mn}(Z(\cL))\ge \delta$.
\end{lemma}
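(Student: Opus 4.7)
The plan is to reduce the statement directly to Choi's characterization of complete positivity. Recall that Choi's theorem asserts that a real linear transformation $\Phi:\rH_n\to\rH_m$ (extended complex-linearly to $\C^{n\times n}\to\C^{m\times m}$) is completely positive if and only if its Choi matrix $\sum_{i,j=1}^n \e_i\e_j^*\otimes \Phi(\e_i\e_j^*)\in\C^{mn\times mn}$ is positive semidefinite. In the block form \eqref{defZL} this Choi matrix is exactly $Z(\Phi)$, so $\Phi\in CP(n,m)$ is equivalent to $Z(\Phi)\succeq 0$.

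Given this identification, the argument proceeds in three steps. First, I would observe that the map $\cL\mapsto Z(\cL)$ is $\R$-linear, which is immediate from its entrywise definition. Second, I would compute $Z(\cI(n,m))$: since
\[
\cI(n,m)(\e_i\e_j^*)=(\tr \e_i\e_j^*)I_m=(\e_j^*\e_i)I_m=\delta_{ij}I_m,
\]
the block matrix $Z(\cI(n,m))$ has $I_m$ on each diagonal block and $0$ elsewhere, hence $Z(\cI(n,m))=I_{mn}$. Combining linearity with this computation gives
\[
Z(\cL-\delta\cI(n,m))=Z(\cL)-\delta I_{mn}.
\]

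Third, by Definition \ref{deltaposL} the transformation $\cL$ is $\delta$-positive if and only if $\cL-\delta\cI(n,m)\in CP(n,m)$, and by Choi's theorem this is equivalent to $Z(\cL)-\delta I_{mn}\succeq 0$, which in turn holds if and only if $\lambda_{mn}(Z(\cL))\ge\delta$. This chain of equivalences is exactly the statement of the lemma.

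No essential obstacle is anticipated; the only point requiring a brief check is that $\cI(n,m)$ is itself completely positive (so that Definition \ref{deltaposL} is internally consistent), which follows from the Kraus-type expansion
\[
\cI(n,m)(X)=\sum_{k=1}^m\sum_{i=1}^n T_{ki}XT_{ki}^*,\qquad T_{ki}=\e_k^{(m)}(\e_i^{(n)})^*\in\C^{m\times n},
\]
and which is also confirmed a posteriori by the computation $Z(\cI(n,m))=I_{mn}\succeq 0$ above.
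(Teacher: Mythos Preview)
Your proof is correct and follows essentially the same route as the paper: apply Choi's theorem to $\cL-\delta\cI(n,m)$, use linearity of $Z(\cdot)$ together with the computation $Z(\cI(n,m))=I_{mn}$ to obtain $Z(\cL-\delta\cI(n,m))=Z(\cL)-\delta I_{mn}$, and then translate positive semidefiniteness into the eigenvalue condition $\lambda_{mn}(Z(\cL))\ge\delta$. Your added verification that $\cI(n,m)$ is completely positive via an explicit Kraus decomposition is a nice extra, but not needed for the lemma itself.
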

\begin{proof}Recall Choi's theorem \cite{Cho75} that $\cL-\delta\cI(n,m)$ is completely positive if and only if $\lambda_{mn}(Z(\cL-\delta\cI(n,m)))\ge 0$.
Clearly, for $U\in \C^{n\times n}$ we have that $\cI(n,m)(U)=(\tr U)I_m$.   Hence $Z(\cI(n,m))=I_{mn}$, and $Z(\cL-\delta\cI(n,m))=Z(\cL)-\delta I_{mn}$.
Thus $\lambda_{mn}(Z(\cL-\delta\cI(n,m)))=\lambda_{mn}(Z(\cL))-\delta \ge 0$ if and ony if  $\lambda_{mn}(Z(\cL))\ge \delta$.
\end{proof}
\begin{corollary}\label{deltaposcp}    Let $\delta\ge 0$ and assume that $\cL:\rH_{n}\to \rH_m$  is $\delta$-positive.  Then $\cL$ is completely positive.  In particular $\cI(n,m)\in CP(n,m)$.
\end{corollary}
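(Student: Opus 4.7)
The plan is to deduce this corollary directly from Lemma \ref{delposchar}, which is the workhorse that has already been proved just above. The corollary is essentially a tautology once one reads that lemma carefully, with a small extra observation for the claim about $\cI(n,m)$.

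First I would unwind the definition: if $\cL$ is $\delta$-positive with $\delta\ge 0$, then Lemma \ref{delposchar} gives $\lambda_{mn}(Z(\cL))\ge \delta$. Since $\delta\ge 0$, this yields $\lambda_{mn}(Z(\cL))\ge 0$. But the case $\delta=0$ of Lemma \ref{delposchar} (which is just Choi's characterization of complete positivity itself, as recorded in the proof of that lemma) says that the condition $\lambda_{mn}(Z(\cL))\ge 0$ is equivalent to $\cL\in CP(n,m)$. Hence $\cL$ is completely positive.

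For the ``in particular'' statement about $\cI(n,m)$, I would use the computation already performed inside the proof of Lemma \ref{delposchar}, namely $Z(\cI(n,m))=I_{mn}$. Its smallest eigenvalue is $1\ge 0$, so Choi's criterion immediately gives $\cI(n,m)\in CP(n,m)$. Equivalently, one can observe that $\cI(n,m)$ is trivially $1$-positive since $\cI(n,m)-1\cdot \cI(n,m)=0$ is completely positive; then the first part of the corollary applies. A third, even more concrete route, is to exhibit an explicit Kraus decomposition: writing $\tr X=\sum_{i=1}^n \e_i^* X\e_i$ and $I_m=\sum_{j=1}^m \mathbf{f}_j\mathbf{f}_j^*$ with $\mathbf{f}_j$ the standard basis of $\C^m$, one gets
\[
\cI(n,m)(X)=\sum_{i=1}^n\sum_{j=1}^m T_{ij}X T_{ij}^*,\qquad T_{ij}=\mathbf{f}_j\e_i^*\in\C^{m\times n},
\]
which is manifestly of the form \eqref{defccpop}.

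There is really no obstacle here: the lemma has already done all the work, and the only thing to notice is that ``$\delta$-positivity'' is a strengthening of complete positivity whenever $\delta\ge 0$, because it adds a nonnegative shift to the smallest eigenvalue of the Choi matrix. I would present the proof in two or three lines.
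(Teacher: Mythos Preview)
Your proposal is correct and matches the paper's intent: the corollary is stated there without proof precisely because it is immediate from Lemma~\ref{delposchar} (the case $\delta=0$ is Choi's criterion, and $\delta$-positivity with $\delta\ge 0$ forces $\lambda_{mn}(Z(\cL))\ge 0$), together with the identity $Z(\cI(n,m))=I_{mn}$ established in that lemma's proof. Your alternative routes for $\cI(n,m)\in CP(n,m)$ are also fine and equally short.
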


Let $\cC:\C^{n\times n}\to \C^{m\times m}$ be a linear transformation over $\C$. 
Then $\cC$ is called rationally represented if the entries  each matrix $\cL(\e_i\e_j^*)$ are Gaussian rationals, denoted as
$\Q+\bi \Q$.  Assume that $\cC$ is rationally represented.  Denote by $\langle \cC\rangle=\sum_{i,j=1}^n \langle\cL(\e_i\e_j^*) \rangle $ the complexity of $\cC$.
\begin{theorem}\label{epsapproxCWcp}  Let $\cA,\cB\in CP(n,m)$ be rationally represented.  Assume furthermore that $\cB$ is $\delta$-positive for
a given rational $\delta>0$.  Then $\hat\rho(\cA,\cB)=\rho(\cA,\cB)$.  
Suppose that $\rho(\cA,\cB)>0$.  Then for any $\varepsilon\in (0,1)\cap \Q$ one can find
 $\tilde\rho(\cA,\cB)\in \Q_{++}$, in polynomial time in $\langle \cA\rangle+\langle \cB\rangle+\langle \delta\rangle+ \langle \varepsilon\rangle $, such that 
\begin{equation}\label{epsapproxCWn2}
\tilde \rho(\cA,\cB)\le \rho(\cA,\cB)\le (1+\varepsilon) \tilde\rho(\cA,\cB).
\end{equation}
\end{theorem}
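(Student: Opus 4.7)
The first assertion is immediate. The hypothesis that $\cB$ is $\delta$-positive means $\cB - \delta\cI(n,m) \in CP(n,m)$, so $\cB(X) \succeq \delta(\tr X) I_m$ for every $X \in \rH_{+,n}$. In particular $\cB(X) \succeq \delta I_m \succ 0$ for every $X \in \rH_{+,1,n}$, which is exactly the condition $\cB \succ 0$ used in Lemma \ref{rhoABachievqc}. Taking $X_0 = I_n/n$ one checks $r(\cA,\cB, X_0) \le \lambda_1(\cA(I_n))/\delta < \infty$, so $\rho(\cA,\cB) < \infty$, and part (5) of Lemma \ref{rhoABachievqc} then gives $\hat\rho(\cA,\cB) = \rho(\cA,\cB)$.

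The algorithmic part of the plan is to imitate the bisection strategy of Theorem \ref{epsapproxCWn}, replacing the LP feasibility subroutine by the convex quadratic SDP \eqref{minftprob}. First, compute rational bounds $f_0 \le \rho(\cA,\cB) \le g_0$ of bit-length polynomial in $\langle\cA\rangle + \langle\cB\rangle + \langle\delta\rangle$: the upper bound $g_0$ from a rational overestimate of $r(\cA,\cB, I_n/n)$, where $\cB(I_n/n) \succeq (\delta/n) I_m$ coming from $\delta$-positivity makes the corresponding generalized eigenvalue effectively computable, and the lower bound $f_0$ from \eqref{lowbdrhABcp} using $\rho(M)^{-1} \ge \|M\|^{-1}$ for $M = \cA^\vee(I_m)^{-1}\cB^\vee(I_m)$; note that $\cA^\vee(I_m) \succ 0$ since $\rho(\cA,\cB) > 0$ by hypothesis, via part (1) of Lemma \ref{lowbdcp}. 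Then iterate: given the current interval $[f, g]$, set $h = (f+g)/2$ and run an interior-point SDP solver on \eqref{minftprob} to precision $\tau$ (to be fixed), producing a rational $\mu(h)$ with $\mu_0(h) \le \mu(h) \le \mu_0(h) + \tau$, together with a feasible pair $(X(h), Z(h)) \in \rH_{+,1,n} \times \rH_{+,m}$ realizing it as in \eqref{rABcpapr}.

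The decisive step, made possible by $\delta$-positivity, turns approximate feasibility into a certified upper bound on $\rho(\cA,\cB)$. Set $W := h\cB(X(h)) - \cA(X(h)) - Z(h)$; by \eqref{rABcpapr}, $W$ is hermitian with $\|W\| = \sqrt{\mu(h)}$. Every eigenvalue of $W$ is then at most $\sqrt{\mu(h)}$, so $W \preceq \sqrt{\mu(h)}\,I_m$, and $\cB(X(h)) \succeq \delta I_m$ upgrades this to $W \preceq (\sqrt{\mu(h)}/\delta)\,\cB(X(h))$. Combined with $Z(h) \succeq 0$,
\[
\cA(X(h)) = h\cB(X(h)) - Z(h) - W \preceq \bigl(h + \sqrt{\mu(h)}/\delta\bigr)\cB(X(h)),
\]
so $r(\cA,\cB, X(h)) \le h + \sqrt{\mu(h)}/\delta$, giving $\rho(\cA,\cB) = \hat\rho(\cA,\cB) \le h + \sqrt{\tau}/\delta$ whenever $\mu(h) \le \tau$. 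Conversely, if $\mu(h) > \tau$ then $\mu_0(h) > 0$, so no $X \in \rH_{+,1,n}$ satisfies $h\cB(X) \succeq \cA(X)$, forcing $\rho(\cA,\cB) = \hat\rho(\cA,\cB) \ge h$. Each iteration therefore either raises $f$ to $h$ or lowers $g$ to $h + \sqrt{\tau}/\delta$. Choosing $\tau$ so small that $\sqrt{\tau}/\delta \le \varepsilon f_0/4$ preserves geometric contraction of the interval, and $O(\log((g_0/f_0)/\varepsilon))$ iterations produce rational endpoints satisfying \eqref{epsapproxCWn2} with $\tilde\rho(\cA,\cB) := f$.

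The main technical obstacle is the inherent asymmetry of the SDP oracle: an interior point method never certifies primal infeasibility exactly, only approximate primal optimality. This is precisely where $\delta$-positivity earns its role, converting a small Frobenius residual $\sqrt{\mu(h)}$ into a small multiplicative slack $\sqrt{\mu(h)}/\delta$ on the returned upper bound. The remaining bookkeeping—that the total number of arithmetic operations and the bit length of intermediate rationals stay polynomial when $\tau$ is chosen inversely polynomial in $\langle\cA\rangle+\langle\cB\rangle+\langle\delta\rangle+\langle\varepsilon\rangle$—follows from standard polynomial-time guarantees for semidefinite programming (\cite{GLS88}).
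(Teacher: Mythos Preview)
Your proof follows essentially the same bisection-plus-SDP strategy as the paper, with the same use of $\delta$-positivity to convert the Frobenius residual $\sqrt{\mu(h)}$ into a multiplicative slack via $\cB(X)\succeq \delta I_m$, and the same sources for the initial bounds. One minor slip: in your displayed inequality you actually need $-W \preceq (\sqrt{\mu(h)}/\delta)\,\cB(X(h))$ rather than $W \preceq \ldots$ (since $\cA(X(h)) = h\cB(X(h)) - Z(h) - W$), but this bound follows equally from $\|W\|=\sqrt{\mu(h)}$ and the argument is unaffected.
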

\begin{proof}  
As $\cB(I_n)\succeq \delta(\tr I_n) I_m=\delta n I_m$, we deduce that  
\noindent
$r(\cA,\cB,I_n)\le (n\delta)^{-1}\rho(\cA(I_n))<t_0\in \Q_{++}$.  Hence $\rho(\cA,\cB)<t_0$.  
As $\cB\succ 0$ part \emph{5} of  Lemma \ref{rhoABachievqc}
yields that $\hat\rho(A,B)=\rho(A,B)$.
As $\hat\rho(\cA,\cB)>0$ Lemma \ref{lowbdcp} yields that $\cA^{\vee}(I_m)\succ 0$.  Hence
$\rho(\cA,\cB)=\hat\rho(\cA,\cB)\ge \rho(\cA^{\vee}(I_m)^{-1}\cB^{\vee}(I_m))^{-1}>s_0\in\Q_{++}$.
Note that the values of $s_0,t_0$  are based on the eigenvalue computations of $\lambda(A,B)$, given by
\eqref{releigAB1}, whose approximation is polynomial \cite{GV13}.

We now start a bisection problem as in the proof of Theorem \ref{epsapproxCWn}.  Suppose that we know that $\rho(\cA,\cB)\in[s_k,t_k]$, where $s_k,t_k\in \Q_{++}$.  Let $t=\frac{s_k+t_k}{2}$.  Consider the minimum problem \eqref{minftprob}.  If $\mu(t)\ge 2\tau$ then $\rho(\cA,\cB)>t$. and we let $s_{k+1}=t,t_{k+1}=t_k$.  Assume now $\mu(t)<2\tau$.
We claim that 
\begin{equation}\label{uprhoABcpest}
\rho(\cA,\cB)\le t+\frac{\sqrt{2\tau}}{\delta}.
\end{equation}
Indeed, let $X(t)$, $Z(t)$ and $W$ be defined as in \eqref{rABcpapr}.  Let $\lambda_1(W)\ge\cdots\ge \lambda_m(W)$ be the $m$-eigenvalues of $W$.  Then
\[2\tau>\|W\|^2=\langle W,W\rangle=\sum_{j=1}^m \lambda_j(W)^2\ge \lambda_m(W)^2.\]
As $\sqrt{2\tau}I_m+W\ge 0$ we obtain
\[(t+\frac{\sqrt{2\tau}}{\delta})\cB(X(t))-\cA(X(t))=Z(t)+\frac{\sqrt{2\tau}}{\delta}\cB(X(t))+W\ge Z(t)+\frac{\sqrt{2\tau}}{\delta}\delta I_m+W\ge 0.\]
Thus $r(\cA,\cB,X(t))\le t+\frac{\sqrt{2\tau}}{\delta}$ which implies \eqref{uprhoABcpest}.  Now choose $\tau=\frac{(s_k+t_k)^2\delta^2}{32}$.
So in the case $\mu(t)<2\tau$ we set $s_{k+1}=s_k, t_{k+1}=\frac{s_k+3t_k}{4}$.   

To conclude we showed that $\rho(\cA,\cB)\in[s_{k+1},t_{k+1}]$, where $[s_{k+1},t_{k+1}]\subset [s_k,t_k]$ and $|t_{k+1}-s_k|\le \frac{3}{4}|t_k-s_k|$.
Hence in polynomial time in $\langle A\rangle+\langle B\rangle+\langle \delta\rangle+ \langle \varepsilon\rangle $ we get the approximation $\tilde\rho(\cA,\cB)\in\Q_{++}$
satisfying \eqref{epsapproxCWn2}.
\end{proof}
\section{Minimal weak optimal solutions for CP-operators}\label{minoptCP}  
For $X\in\rH_{+,n}$ we call $\range X\subseteq\C^n$ the support of $X$, and denote $\supp X=\range X$.  So $\dim\supp X=\rank X$.
Assume that $\cA,\cB\in CP(n,m)$.  Suppose furthermore that $\hat\rho(\cA,\cB)<\infty$.  A weak optimal $Y\in\rH_{+,n}$ is called minimal weak optimal if there is no weak optimal $X\in\rH_{+,n}\setminus\{0\}$ such that $\supp X\subsetneq \supp Y$.

The following result is an analog of Theorem \ref{CWpairthm}.  
\begin{theorem}\label{CWCPpairthm} Let $ m,n$ be positive integers.  Assume that $\cA,\cB\in CP(n,m)$.  Suppose that $\hat\rho(\cA,\cB)\in (0,\infty)$.
 \begin{enumerate} 
 \item Assume that $Y\in\rH_{+,n}\setminus\{0\}$ is weak optimal.  Then at least one of the eigenvalues of $(\cA-\hat\rho(\cA,\cB)\cB)(Y)$ is zero.
 \item Assume that there exists a weak optimal  $Y\in\rH_{+,n}$ whose rank is $\ell\ge 1$.  Let $\V=\range Y$.  Denote by $\cA',\cB':\rS(\V)\to \rH_m$ the restrictions of $\cA,\cB$
 to all $X\in\rH_n$ such that $\range X\subseteq \V$.  Then $\cA',\cB'$ are CP-operators.
 If $\ell\ge m$ then $\rank (\cA'-\hat\rho(\cA,\cB)\cB')<m^2$.
 \item
 A minimal weak optimal $Y$ has rank at most $m$. 
 \item Assume that $Y$ is a minimal weak optimal with rank $m$. Then $Y$ satisfies \eqref{GPFcpop}.  Furthermore $\rank (\cA'-\hat\rho(\cA,\cB)\cB')=m^2-1$.
\item Let $Y'$ be a minimal weak optimal with rank $\ell<m$. Then there exists a minimal weak optimal $Y$, satisfying $\range Y=\range Y'$  with the following property:
The matrix $(\cA-\hat\rho(\cA,\cB)\cB)(Y)$ has at least $\ell$ zero eigenvalues. 
 \end{enumerate}
 \end{theorem}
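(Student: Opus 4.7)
Throughout, put $t_0 = \hat\rho(\cA,\cB)$ and set $M(X) := (\cA - t_0\cB)(X)$ for $X \in \rH_n$. For a weakly optimal $Y \in \rH_{+,n}$ with $\V := \range Y$ of dimension $\ell$, write $\cA', \cB' : \rS(\V) \to \rH_m$ for the restrictions (which remain CP) and $M' := \cA' - t_0\cB'$. Then $\dim_\R \rS(\V) = \ell^2$ and $\dim_\R \rH_m = m^2$; $Y$ is positive definite as an element of $\rS_+(\V)$, and weak optimality forces $M(Y) = -W$ for some $W \in \rH_{+,m}$.

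The plan is to mirror Theorem \ref{CWpairthm} step by step, with eigenvalues replacing scalar coordinates and ranges replacing supports. For (1), if $M(Y) \prec 0$ then Lemma \ref{releigAB} provides $t_1 < t_0$ with $(\cA - t_1\cB)(Y) \preceq 0$, contradicting $r(\cA,\cB,Y) = t_0$. For (2), assume $M'$ is surjective; by lower semicontinuity of rank, $M'_t := \cA' - t\cB'$ is surjective on a neighborhood of $t_0$, and a continuous (Moore--Penrose) right inverse produces $X(t) \in \rS(\V)$ with $X(t_0) = Y$ and $M'_t(X(t)) = -W$. Since $Y$ is positive definite on $\V$, $X(t)$ is also positive definite on $\V$ for $t$ near $t_0$, and choosing $t < t_0$ and extending by zero yields $r(\cA,\cB,X(t)) \le t < t_0$, a contradiction. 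For (3), if $\ell > m$ then (2) forces $\dim\ker M' \ge \ell^2 - m^2 + 1 \ge 2$, so we pick $X_0 \in \ker M'$ independent of $Y$ (flipping sign if needed, so that $X_0$ has a negative eigenvalue); the largest $s_0 > 0$ with $Y + s_0 X_0 \succeq 0$ on $\V$ produces a weakly optimal vector of strictly smaller range, contradicting minimality.

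For (4), if $M(Y) \ne 0$ then $Y \notin \ker M'$: when $M'$ is surjective the argument of (2) applies directly, and when it is not, any nonzero element of $\ker M'$ is independent of $Y$ and the part-(3) perturbation contradicts minimality; hence $M(Y) = 0$. The rank equality $\rank M' = m^2 - 1$ follows because $\rank M' \le m^2 - 2$ would give $\dim\ker M' \ge 2$ and an independent kernel element to perturb with, contradicting minimality.

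Part (5) is the main obstacle. Set $\mathcal{U} := \ker M(Y) \subseteq \C^m$ and $p := \dim \mathcal{U}$; then $W$ is positive definite on $\mathcal{U}^\perp$ and vanishes on $\mathcal{U}$. Define the real-linear map $\Phi : \rS(\V) \to \rH_m$ by $\Phi(X) = M(X) - P_{\mathcal{U}^\perp} M(X) P_{\mathcal{U}^\perp}$; then $\Phi(X) = 0$ iff $\mathcal{U} \subseteq \ker M(X)$, and since the image lies in a subspace of real dimension $2pm - p^2$, one gets $\dim\ker\Phi \ge \ell^2 - 2pm + p^2$. Observe $Y \in \ker\Phi$ by construction. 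Among all minimal weakly optimal $Y$ with $\range Y = \range Y'$, choose one maximizing $p$; assume for contradiction $p < \ell$. In the principal case $\dim\ker\Phi \ge 2$, pick $X_0 \in \ker\Phi$ not proportional to $Y$, so that $M(Y + sX_0) = -W + sM(X_0)$ is supported in $\mathcal{U}^\perp$. Let $s_1 \in (0,\infty]$ be the largest $s$ with $Y + sX_0 \succeq 0$ on $\V$, and $s_2 \in [0,\infty]$ the smallest $s$ at which a new eigenvalue of $M(Y + sX_0)|_{\mathcal{U}^\perp}$ reaches zero. If $s_2 < s_1$, then $Y + s_2X_0$ is weakly optimal with the same range as $Y'$ and strictly larger $p$, contradicting maximality; if $s_1 \le s_2$, then $Y + s_1X_0$ is weakly optimal with strictly smaller range, contradicting minimality. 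The borderline case $\dim\ker\Phi = 1$ (forced only when $p = \ell - 1$) is handled as in (2), perturbing $t$ slightly below $t_0$ and using the implicit function theorem on $\Phi_t(X) = 0$ with $X(t_0) = Y$ to produce a positive-definite-on-$\V$ $X(t)$ achieving $r(\cA,\cB,X(t)) \le t < t_0$. The chief difficulty throughout (5) is controlling the coupling between $\mathcal{U}$ and $\mathcal{U}^\perp$ when perturbing --- the direct analog of the row-partitioning in the matrix proof --- while maintaining positive semidefiniteness of $Y + sX_0$ on $\V$ and carefully tracking how the zero eigenvalues of $M$ evolve.
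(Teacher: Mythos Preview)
Your treatment of parts \emph{1}--\emph{4} is essentially the paper's argument, phrased slightly differently (Moore--Penrose inverse instead of an explicit splitting $\bW \oplus \bW^\perp$), and is correct.

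Part \emph{5} has a genuine gap in the dimension count. Your map $\Phi$ lands in a subspace of real dimension $2pm - p^2$, so your lower bound is $\dim\ker\Phi \ge \ell^2 - 2pm + p^2$. But recall the standing hypothesis here is $p < \ell < m$, so the term $-2pm$ can dominate: for instance $m=3$, $\ell=2$, $p=1$ gives $\ell^2 - 2pm + p^2 = -1$, and $m=4$, $\ell=3$, $p=2$ gives $-3$. Thus the bound is vacuous for most parameter values, and your assertion that the borderline case $\dim\ker\Phi = 1$ is ``forced only when $p=\ell-1$'' is unsupported --- nothing rules it out for smaller $p$. The implicit-function handling of the borderline case is also shaky: if $\rank\Phi_{t_0} = \ell^2 - 1$ is strictly below the codomain dimension $2pm - p^2$, then for $t$ near $t_0$ the rank can jump to $\ell^2$, in which case $\ker\Phi_t = \{0\}$ and there is no $X(t)$ to track.

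The paper avoids this by constraining less: rather than demanding $M(X)$ be supported entirely on $\mathcal U^\perp$ (your $\ker\Phi$, costing $2pm - p^2$ real equations), it only requires the $\mathcal U$--block $P_{\mathcal U}M(X)P_{\mathcal U}$ to vanish (costing $p^2$ equations). This gives a kernel of dimension at least $\ell^2 - p^2 \ge 2p+1 \ge 3$ whenever $p<\ell$, so an indefinite kernel element always exists. The price is that the perturbation $M(Y+sX)$ now has uncontrolled off-diagonal blocks between $\mathcal U$ and $\mathcal U^\perp$; the paper handles this coupling through a separate first-variation argument for the Rayleigh quotient (invoking Rellich's theorem on analyticity of eigenvalues) to show that the projected pair $(\tilde{\cA},\tilde{\cB})$ still has weak Collatz--Wielandt quotient equal to $t_0$. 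That analytic-perturbation step is precisely the missing ingredient your sketch does not supply.
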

 \begin{proof} {\it (1)}  Let $\cD(t)=\cA-t\cB$ and $t_0=\hat\rho(\cA,\cB)$.  As $r(\cA,\cB,Y)=t_0$ it follows that $\cD(t_0)(Y)\le 0$.  Suppose to the contrary that $\cD(t_0)(Y)<0$,  As $t_0>0$ there exists $0\le t_1<t_0$ such that $\cD(t_1)(Y)\le 0$.  Hence $r(\cA,\cB,Y)\le t_1<t_0$ contrary to our assumption.

\noindent
 \emph{(2)}  Choose an orthonormal basis $\g_1,\ldots,\g_{\ell}$ of $\V$.  Then $P_{\V}=\sum_{i=1}^{\ell} \g_i\g_i^*$ is the orthogonal projection on $\V$.
 Note that for each $X\in\rH_n$, $P_\V X P_\V\in\rS(\V)$.  (Observe that $P_\V^*=P_\V$.)
 Suppose that $\cC:\rH_n\to\rH_m$ is a CP-operator given by \eqref{defccpop}.  Define $\hat\cC:\rH_n\to\rH_m$ by $\hat\cC(X)=\cC(P_\V XP_\V)$.  Then $\hat  \cC(X)=\sum_{j=1}^k (T_jP_\V)X(T_jP_\V)^*$.  So $\hat \cC$ is also a CP-operator.  Hence $\cA',\cB'$ are CP-operators.  
 By choosing an orthonormal basis $\g_1,\ldots,\g_{\ell}$ of $\V$ we can identify $\rS(\V)$ with $\rH_{\ell}$.  So $\cA',\cB':\rH_{\ell}\to \rH_m$.  Recall that $\dim\rH_{\ell}=\ell^2, \dim\rH_m=m^2$.  Clearly, $\hat\rho(\cA',\cB')=\hat\rho(\cA,\cB)$.  By abusing the notation we assume that an optimal $Y$ is in $\rH_{+,\ell}$.
 
 Assume that $\ell\ge m$.  Let $\cC(t)=\cA'-t\cB':\rH_{\ell}\to \rH_m$.  Thus $\cC(t_0)(Y)=-W$ for some $W\in\rH_{+,m}$.
 Clearly, $\rank\cC(t_0)\le m^2$.   Assume to the contrary that $\rank\cC(t_0)= m^2$.
 Hence the exists an $m^2$ dimensional subspace $\bW\subseteq \rH_{\ell}$ such that $\cC(t_0)|\mathbf{W}$ is an invertible operator.  Let $\cA_1,\cB_1$ and $\cA_2,\cB_2$ be the restrictions of $\cA',\cB'$ to $\mathbf{W}$ and $\mathbf{W} ^{\perp}$ respectively.  Define $\cC_1(t)=\cA_1-t\cB_1, \cC_2(t)=\cA_2-t\cB_2$.  As $\cC_1(t_0)$ is invertible, it follows that there exists $\varepsilon>0$ such that $C_1(t):\bW\to \rH_m$ is invertible for $t\in[t_0-\varepsilon,t_0+\varepsilon]$.  Let $Y=Y_1+Y_2$, where $Y_1\in\mathbf{W}, Y_2\in \mathbf{W}^\perp$.  
 Assume that $Y_1(t)=-\cC_1(t)^{-1}(\cC_2(t)(Y_2)+W)$.  So $Y_1(t_0)=Y_1$.  Hence $\cC(t)(Y_1(t)+Y_2)=-W$.  As $\rank (Y_1+Y_2)=\ell$ it follows that there exits $t_0-\varepsilon <t_1<t_0$ such that $Y_1(t)+Y_2\in\rH_{+,\ell}$ and $\rank (Y_1(t)+Y_2)=\ell$.  Therefore $r(\cA,\cB,Y_1(t)+Y_2)\le t_1<\hat\rho(\cA,\cB)$, which contradicts the definition of $\hat\rho(\cA,\cB)$.
 
 \noindent
 \emph{(3)} Assume to the contrary that $Y$ is a minimal weak optimal with rank $\ell>m$.  Assume as in \emph{2} that we restricted ourselves to $\cA',\cB':\rH_{\ell}\to \rH_m$.
 Let $t_0$ and $\cC(t_0)$ be defined as above.  As $\ell >m$ it follows that $\dim \rH_{\ell}=\ell^2\ge (m+1)^2=m^2 +2m+1$.  So $\dim\ker \cC(t_0)\ge 2m+1\ge 2$.
 Hence there exists an indefinite matrix $X\in\rH_{\ell}$, with at least one positive and one negative eigenvalue such that $\cC(t_0)(X)=0$.  Let $s\in[0,\infty]$ an consider the matrix $Y(s)=Y+sX$.  For $s=0$ $Y\in\rH_{++,\ell}$.  For $s\gg 1$ $Y(s)$ has a negative eigenvalue.  Hence there exists $s_0>0$ such that $Y(s_0)\in\rH_{+,\ell}$ and at least one eigenvalue of $Y(s_0)$ is $0$.  Note that $Y(s_0)\ne 0$ as $Y$ can not be proportional to $X$.  As $\cC(t_0)(Y(s_0))=\cC(t_0)(Y)=-W$ it follows that  $Y(s_0)$ is optimal.
 As $\rank Y(s_0)<\ell$, we deduce that $Y$ is not  minimal weak optimal contrary to our assumptions.

 \noindent 
  \emph{(4)}  Let $Y$ be a minimal weak optimal of rank $m$.  Assume as in \emph{2} that we restricted ourselves to $\cA',\cB':\rH_{m}\to \rH_m$.  
  Part \emph{(2)} yields that $\rank\cC(t_0)<m^2$.  
  Let $X\ne 0$ satisfy $C(t_0)(X)=0$.  If $X$ is indefinite,
  as in the proof of \emph{(2)}, we deduce that $Y$ is not a minimal weak optimal, contrary to our assumption. 
 If $X$ or $-X$ is positive semidefinite then either $X$ or $-X$ is a GPF-vector.  We claim that $Y$ is proportional to $X$.  Otherwise, assuming that $X$ is positive semidefine, by considering $Y-sX$ we deduce that $Y$ is not optimal.  Hence $Y$ is a GPF-vector.  Moreover, the above arguments yield that $\dim\ker \cC(t_0)=1$, i.e.  
 $\rank \cC(t_0)=m^2-1$.
 
 \noindent
 \emph{(5)} Assume that $Y'\in\rH_{+,n}$ is a minimal weak optimal with rank $\ell< m$.  So $(\cA-\rho(A,B)\cB)(Y')\le 0$. 
 By part \emph{1} we know that $(\cA-\rho(A,B)\cB)(Y')$ has at least one zro eigenvalue.  
 If $\ell=1$ the claim \emph{5} of the theorem trivially holds.   
 
 Assume that $\ell>1$.
 Consider all minimal weak optimal $\tilde Y$ such that $\range \tilde Y=\range Y'$.  Let $Y$ be a minimal weak optimal satisfying: $\range Y=\range Y'$ 
 and  $(\cA-\rho(A,B)\cB)(Y)$ has the maximum number of zero eigenvalues.  Assume that this maximum is $p$.  We claim that $p\ge \ell$.  Suppose not.
 As in the proof of part \emph{(2)} we restrict ourselves to $\cA',\cB':\rH_{\ell}\to \rH_m$.  So $t_0=\rho(\cA',\cB')$.
 By abusing the notation we assume that $Y',\tilde Y, Y\in\rH_{+,\ell}$.  Let $W=-C(t_0)(Y)$.  Then $\C^m= \U_1\oplus \U_2$, where $\U_1=\range W, \U_2=\U_1^\perp=\ker W$.  Note that $W|\U_1$ positive  definite.  Let $P_{\U_2}$ be the orthogonal projection of $\C^m$ on $\U_2$. Let $\tilde\cA,\tilde\cB:\rH_{\ell}\to \rH_p$, where we identify
 $\rH_p$ with $P_{\U_2}\rH_mP_{\U_2}$ and $\tilde \cA,\tilde\cB$ with $P_{\U_2}\cA P_{\U_2},P_{\U_2}\cB P_{\U_2}$ respectively.  Clearly, $\hat\rho(\tilde\cA,\tilde\cB)\le \hat\rho(\cA',\cB')$.  As in the proof of part \emph{(5)} of Theorem \ref{CWpairthm} we claim that $\hat\rho(\tilde\cA,\tilde\cB)= \hat\rho(\cA',\cB')$.  Suppose not.  Then there exists $U\in \rH_{+,\ell}\setminus\{0\}$ such that $r(\tilde \cA,\tilde\cB,U)=t_1<t_0$. Clearly, $Y$ and $U$ are linearly independent.  As $Y\in\rH_{++,\ell}$ we can assume that $Y\ge U$.
 Let $Y(f)=(1-f)Y+fU$.  As in the proof of part \emph{(5)} of Theorem \ref{CWpairthm} we claim that there exists $\varepsilon>0$ such that $r(\tilde\cA,\tilde\cB, (1-f)Y+fU))\le t_0-\varepsilon f$ for $f\in[0,1]$.  We can choose $\varepsilon=(t_0-t_1)\lambda'$, where $\lambda'$ is the smallest positive eigenvalue of $\tilde \cB(Y)^{-1}\tilde\cB(U)$.  (Note that the eigenvalues of 
 $\tilde \cB(Y)^{-1}\tilde\cB(U)$ are the eigenvalues of the Hermitian matrix $\tilde \cB(Y)^{-\frac{1}{2}}\tilde\cB(U)\tilde \cB(Y)^{-\frac{1}{2}}$.) 
 We now claim that there exists small positive $\delta$ such that for 
  we have the inequality 
  \begin{equation}\label{lambAB'ineq}
  r(\cA',\cB',(1-f)Y+fU))\le t_0-\frac{1}{2}\varepsilon f, \quad f\in[0,\delta].
  \end{equation}
    For that it is enough to show that $\lambda(\cA'((1-f)Y+fU),\cB'((1-f)Y+fU))\le t_0-\frac{1}{2}\varepsilon f$ for $f\in[0,\delta]$. 
 We use the first variation formula for  a geometrically simple eigenvalue $t_0$ of the Rayleigh ratio on the right-hand side of \eqref{releigAB1}, where $A=A(f)=\cA'((1-f)Y+fU))$ and $B=B(f)=\cB'((1-f)Y+fU))$, as a function in $f$.  The standard variation formula for a geometrically simple eigenvalue \cite[\S3.8]{Fri15} yields that it is enough to consider the first variation of the Rayleigh quotient given by the right-hand side of \eqref{releigAB1} for $\x\in\U_2\setminus\{\0\}$.   This is equivalent to consider $\lambda(\tilde\cA((1-f)Y+fU),\tilde\cB((1-f)Y+fU))$.  The inequality $\lambda(\tilde\cA((1-f)Y+fU),\tilde\cB((1-f)Y+fU))\le t_0-\varepsilon f$ for $f\in[0,\delta]$ yields that 
 \begin{equation}\label{firstderineq}
 \frac{d}{df}\lambda(\cA'((1-f)Y+fU),\cB'((1-f)Y+fU))|_{f=0^+}\le -\varepsilon.
 \end{equation}
 Recall Rellich's theorem that the eigenvalues of analytic functions of Hermitian matrices are analytic in the neighborhood of $\R$ \cite[4.17]{Fri15}, (when we do not insist on ordering of the eigenvalues).  Hence $\lambda(\cA'((1-f)Y+fU),\cB'((1-f)Y+fU))$ is analytic in $[0,\delta]$ for some $\delta>0$.  That is,  it has convergent Taylor series at $f=0$.  In particular it is in the class $C^2[0,\delta]$.  Hence \eqref{firstderineq} yields \eqref{lambAB'ineq} for small enough positive $\delta$.  
 
 Clearly, \eqref{lambAB'ineq}
 contradicts our assumption that $\hat\rho(\cA',\cB')=t_0$.  Therefore $\hat\rho(\tilde\cA,\tilde\cB)=t_0=r(\tilde\cA,\tilde\cB,Y)$.  Let $\tilde\cC(t)=\tilde\cA-t\tilde\cB$.  
 So $\tilde\cC(t_0):\rH_{\ell}\to \rH_p$.  Our assumption that $\ell>p$ yields that $\ker \tilde\cC(t_0)\ge \ell^2-p^2\ge 3$.  In particular, there exists an indefinite $\tilde X\in\rH_{\ell}$ 
 such that $\tilde \cC(t_0)(\tilde X)=0$.  Identify $\tilde X$ with $X\in\rH_n$, were $\range X\subseteq \range Y$.  The assumption that $\tilde\cC(t_0)(\tilde X)=0$ is equivalent to $\range\cC(t_0)(X)\subseteq \range \cC(t_0)(Y)=\range W$.  Let $Y(s)=Y+sX$.  Then $\cC(t_0)(Y(s))=-W+s\cC(t_0)(X)$.  As $\range\cC(t_0)(X)\subseteq \range \cC(t_0)(Y)=\range W$ it follows that $-W+s\cC(t_0)(X)\le 0$  for $s\ge 0$ if and only if $s\in [0,s_1]$, where $s_1\in (0,\infty]$.  Let $s_0>$ be the largest $s>0$ such that $Y(s)\ge 0$.  So $\rank Y(s_0)\in [\ell-1]$. Note that $Y(s)$ is optimal for $s\in [0,\min(s_0,s_1)]$. We claim that $s_1<s_0$.  Suppose not.  Then $Y(s_0)$ is weak optimal, $\range Y(s_0)\subset \range Y$ and rank $Y(s_0)\le \ell-1$.  Hence $Y$ is not minimal weak optimal, contrary to our assumptions.  Consider a minimal weak optimal $Y(s_1)$.  Note that
 $\rank (-W+s_1\cC(t_0)(X))<\rank -W=m-p$.   Hence $\ker \cC(t_0)(Y(s_1)\ge p+1$, contrary to the choice of $Y$.  Therefore $p\ge \ell$.  
 \end{proof}

\renewcommand{\abstractname}{Acknowledgements}
\begin{abstract} The author thanks the referee for his comments.
 Shmuel Friedland was partially supported by Simons collaboration grant for mathematicians.
\end{abstract}


\bibliographystyle{abbrv}

\end{document}